\numberwithin{equation}{section}
\setlist{leftmargin=3\parindent,labelindent=3\parindent}
\setlist[enumerate]{%
  leftmargin=3\parindent,%
  align=left,%
  labelwidth=3\parindent,%
  labelsep=0pt%
}
\setlist[enumerate,1]{% 
  label={\normalfont (\thesection.\arabic{equation})}, ref={\normalfont \thesection.\arabic{equation}},
  resume%
}
\newtheorem{thm}[equation]{Theorem}
\newtheorem{cor}[equation]{Corollary}
\newtheorem{lem}[equation]{Lemma}
\newtheorem{prop}[equation]{Proposition}
\newtheorem{claim}[equation]{Claim}
\newtheorem{conj}[equation]{Conjecture}
\newtheorem{ques}[equation]{Question}
\theoremstyle{definition}
\newtheorem{defn}[equation]{Definition}
\newtheorem{ex}[equation]{Example}
\newtheorem{rem}[equation]{Remark}
\newtheorem{obs}[equation]{Observation}
\theoremstyle{remark}
\title{Disconnected Common Graphs via Supersaturation}
\author{Jae-baek Lee\thanks{Department of Mathematics, Yonsei University, Seoul, Republic of Korea. E-mail: \texttt{dlwoqor0923@gmail.com}. This work was completed when the first author was affiliated with the University of Victoria.}
\and 
Jonathan A. Noel\thanks{Department of Mathematics and Statistics, University of Victoria, Victoria, B.C., Canada. E-mail \texttt{noelj@uvic.ca}. Research supported by NSERC Discovery Grant RGPIN-2021-02460, NSERC Early Career Supplement DGECR-2021-00024 and a Start-Up Grant from the University of Victoria.}}
\DeclareTextCompositeCommand{\v}{OT1}{l}{l\nobreak\hspace{-.1em}'}
\DeclareTextCompositeCommand{\v}{OT1}{t}{t\nobreak\hspace{-.1em}'\nobreak\hspace{-.15em}}
\DeclareMathOperator{\UC}{UC}
\begin{document}

\maketitle

\begin{abstract}
A graph $H$ is said to be \emph{common} if the number of monochromatic labelled copies of $H$ in a $2$-colouring of the edges of a large complete graph is asymptotically minimized by a random colouring. It is well known that the disjoint union of two common graphs may be uncommon; e.g., $K_2$ and $K_3$ are common, but their disjoint union is not. We investigate the commonality of disjoint unions of multiple copies of $K_3$ and $K_2$. As a consequence of our results, we obtain an example of a pair of uncommon graphs whose disjoint union is common. Our approach is to reduce the problem of showing that certain disconnected graphs are common to a constrained optimization problem in which the constraints are derived from supersaturation bounds related to Razborov's Triangle Density Theorem. We also improve bounds on the Ramsey multiplicity constant
of a triangle with a pendant edge and the disjoint union of $K_3$ and $K_2$. 
\end{abstract}

\section{Introduction}

In one of the first applications of the probabilistic method, Erd\H{o}s~\cite{Erdos47} showed that a random colouring of the edges of a clique on $(1-o(1))2^{-1/2}e^{-1}k2^{k/2}$ vertices with red and blue contains no monochromatic complete graph on $k$ vertices with positive probability; this implies a lower bound on the \emph{Ramsey number} of the complete graph $K_k$, i.e. the smallest $N$ for which every $2$-colouring of the edges of $K_N$ contains a monochromatic $K_k$. To this day, Erd\H{o}s' bound has been improved only slightly by Spencer~\cite{Spencer75}. One of the core themes in Ramsey theory is that random colourings tend to perform well in avoiding certain monochromatic substructures. 

This intuition extends to the closely related area of ``Ramsey multiplicity'' in which the goal is to minimize the number of monochromatic labelled copies of a given graph $H$ in a red/blue colouring of the edges of $K_N$ asymptotically as $N$ tends to infinity. A graph $H$ is said to be \emph{common} if this asymptotic minimum is achieved by a sequence of random colourings. A famous result of Goodman~\cite{Goodman59} implies that $K_3$ is common (see Theorem~\ref{th:GoodmanRamsey}). Inspired by this, Erd\H{o}s~\cite{Erdos62} conjectured that $K_k$ is common for all $k$ and, nearly two decades later, Burr and Rosta~\cite{BurrRosta80} conjectured that every graph $H$ is common. Sidorenko~\cite{Sidorenko89} observed that the \emph{paw graph} consisting of a triangle with a pendant edge is uncommon. Around the same time, Thomason~\cite{Thomason89} showed that $K_k$ is uncommon for all $k\geq4$; thus, the aforementioned conjectures are both false. Later, Jagger, S\v{t}ov\'i\v{c}ek and Thomason~\cite{JaggerStovicekThomason96} proved that every graph $H$ containing $K_4$ is uncommon. In particular, almost every graph is uncommon. In recent years, there has been a steady flow of results proving that the members of certain families of graphs are common or uncommon~\cite{Kral+22,KralVolecWei22+,GrzesikLeeLidickyVolec22,Behague+First,Behague+Second,CsokaHubaiLovasz23,KoLee23,Hatami+12,HancockKralKrncVolec23,CummingsYoung11,Raghuvanshi16,KimLee24}. In spite of this, the task of classifying common graphs seems hopelessly difficult. 

The main goal of this paper is to provide an approach for bounding the number of monochromatic copies of certain disconnected graphs in a colouring of $K_N$ and to use it to obtain several new families of common graphs. Given graphs $H_1$ and $H_2$, let $H_1\sqcup H_2$ denote their disjoint union; also, for a graph $F$ and $\ell\geq1$, let $\ell\cdot F$ be the disjoint union of $\ell$ copies of $F$. The argument of Sidorenko~\cite{Sidorenko89} that the paw graph is uncommon also shows that $K_3\sqcup K_2$ is uncommon (with the same proof). Most of our results focus on the commonality of unions of several copies of $K_3$ and $K_2$. Our first result is as follows. 

\begin{thm}
\label{th:DE(K3cupK3)}
For $0\leq \ell\leq2$, the graph $(2\cdot K_3)\sqcup (\ell\cdot K_2)$ is common. 
\end{thm}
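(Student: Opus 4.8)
The plan is to pass to the language of graph limits and use the standard reformulation that a graph $H$ is common if and only if $t(H,W)+t(H,\overline W)\ge 2^{1-e(H)}$ for every graphon $W$, where $\overline W=1-W$ and $t$ denotes homomorphism density. For $H=(2\cdot K_3)\sqcup(\ell\cdot K_2)$ we have $e(H)=6+\ell$, and since $t$ is multiplicative over connected components, $t(H,W)=t(K_3,W)^2\,t(K_2,W)^\ell$. So, writing $p=t(K_2,W)$, $x=t(K_3,W)$ and $y=t(K_3,\overline W)$, it suffices to prove
\[
  x^2p^\ell+y^2(1-p)^\ell\ \ge\ 2^{-5-\ell}\qquad\text{for every graphon }W.
\]
The first step is to collect the constraints relating $p$, $x$ and $y$. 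I would use just two. Expanding $W_{12}W_{13}W_{23}+\overline W_{12}\,\overline W_{13}\,\overline W_{23}$ and integrating gives the Goodman-type identity $x+y=1-3p+3\|d_W\|_2^2$, where $d_W$ is the degree function of $W$, so by the Cauchy--Schwarz inequality $x+y\ge s(p):=1-3p+3p^2\ge\tfrac14$. In addition, every triangle contains an edge, so $0\le x\le p$ and $0\le y\le 1-p$. These are weak instances of the supersaturation bounds attached to Razborov's triangle density theorem, but for $\ell\le 2$ they turn out to suffice.

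Granting $x+y\ge s(p)$, Cauchy--Schwarz gives, for all $x,y\ge 0$,
\[
  x^2p^\ell+y^2(1-p)^\ell\ \ge\ \frac{(x+y)^2}{p^{-\ell}+(1-p)^{-\ell}}\ \ge\ \frac{s(p)^2\,p^\ell(1-p)^\ell}{p^\ell+(1-p)^\ell}\ =:\ A_\ell(p),
\]
and a short computation shows $A_\ell(\tfrac12)=2^{-5-\ell}$, the extremal value being attained by the quasirandom graphon $W\equiv\tfrac12$. The next step is to verify $A_\ell(p)\ge 2^{-5-\ell}$ for $p$ in a symmetric central interval $I_\ell=[a_\ell,1-a_\ell]$ about $\tfrac12$. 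This is a single-variable polynomial inequality; by symmetry $p=\tfrac12$ is a critical point of $A_\ell$, so the content is to check that it is a local minimum — the second-order coefficient at $p=\tfrac12$ works out to $2\big(12-\ell(\ell+1)\big)$, positive exactly for $\ell\le 2$ — and that $A_\ell$ does not drop below the threshold anywhere in $I_\ell$. Finally, for $p$ to the left of $I_\ell$ (so $p$ small, in particular $p<\tfrac13$), the bound $x\le p$ forces $y\ge s(p)-p=(1-3p)(1-p)\ge 0$, whence $x^2p^\ell+y^2(1-p)^\ell\ge (1-3p)^2(1-p)^{\ell+2}$; this is decreasing in $p$ on $[0,\tfrac13)$ and, for $\ell\le 2$, stays comfortably above $2^{-5-\ell}$ up to $p=a_\ell$ (which is well below $\tfrac13$). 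The symmetric argument via $y\le 1-p$ handles $p$ to the right of $I_\ell$, and patching the three ranges proves the claim.

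The crux is that the target inequality is tight: equality holds at the quasirandom colouring $p=\tfrac12$, $x=y=\tfrac18$, so the central-range estimate $A_\ell(p)\ge 2^{-5-\ell}$ has no room to spare, and for $\ell=2$ it reduces to a delicate polynomial inequality whose only root in $(0,1)$ is $p=\tfrac12$ — I would carry this out by a clean algebraic rearrangement rather than numerics. Verifying that the central and extreme ranges genuinely overlap for $\ell=2$ (i.e., that $a_2<\tfrac13$ and that $(1-3p)^2(1-p)^{\ell+2}$ exceeds $2^{-5-\ell}$ on $[0,a_2]$) is the other point that is not automatic. The same second-order computation explains why the statement halts at $\ell=2$: for $\ell\ge 3$ the coefficient $2\big(12-\ell(\ell+1)\big)$ is no longer positive, so $A_\ell$ dips below $2^{-5-\ell}$ near $p=\tfrac12$ and the crude bound $x\le p$ is not enough; recovering those cases would require replacing it by an honest supersaturation lower bound on $t(K_3,W)$.
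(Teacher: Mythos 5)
Your proposal is correct in outline and reaches the theorem by a genuinely different, more elementary route than the paper. The paper runs the $(J,k,\ell)$-correlated / $g$-bounded machinery of Lemma~\ref{lem:reduction} and Proposition~\ref{prop:reduction}, taking $\rho$ from Bollob\'as' supersaturation bound $t(K_3,W)\ge\tfrac43 t(K_2,W)-\tfrac23$ (Theorem~\ref{th:Bollobas}); you instead pair Goodman's identity — which, since $p^3+(1-p)^3=1-3p+3p^2$, is exactly your constraint $x+y\ge s(p)$ and is the very input the paper draws from Theorem~\ref{th:GoodmanRamsey} — with the trivial cap $t(K_3,W)\le t(K_2,W)$, and collapse to one variable by Cauchy--Schwarz in Engel form. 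I checked the pieces: both of your inequalities are tight at the quasirandom point, $A_\ell(\tfrac12)=2^{-5-\ell}$, your second-order Taylor coefficient $2(12-\ell(\ell+1))$ is correct and explains the $\ell\le 2$ threshold, and for $\ell=2$ the central estimate $A_2(p)\ge 2^{-7}$ does hold on roughly $[0.142,0.858]$ while the fallback $(1-3p)^2(1-p)^{\ell+2}$ sits far above $2^{-5-\ell}$ out to there, so the ranges patch. What your argument buys is a short, self-contained proof for $k=2$ that avoids Bollob\'as/Fisher supersaturation entirely, using only the trivial bound $x\le p$; what the paper's framework buys is generality — it carries over to $k\ge3$ with $\ell\le 5k/3$, where the trivial cap would be far too weak and Fisher/Razborov is essential. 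The one caveat is one you flag yourself: the final checks (that $A_\ell$ stays above threshold throughout $I_\ell$, and that $a_2<\tfrac13$ with overlap) are not carried out, and since $A_2(p)=2^{-7}$ crosses at an irrational $p\approx 0.142$, the hoped-for ``clean algebraic rearrangement'' will in practice be a polynomial-sign verification no less numerical than the paper's own — an incompleteness, not an error.
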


We also show that this is best possible in the sense that $(2\cdot K_3)\sqcup (3\cdot K_2)$ is uncommon; see Proposition~\ref{prop:DE(K_3cupK_3)upper}. Since $K_3$ and $K_2$ are both common, Sidorenko's result~\cite{Sidorenko89} that $K_3\sqcup K_2$ is uncommon tells us that the disjoint union of two common graphs can be uncommon. Using Theorem~\ref{th:DE(K3cupK3)}, we find that the opposite phenomenon is also possible; the disjoint union of two uncommon graphs can be common. In fact, the disjoint union of two copies of a single uncommon graph can be common.

\begin{cor}
\label{cor:uncommonUnion}
There exists an uncommon graph $H$ such that $H\sqcup H$ is common. 
\end{cor}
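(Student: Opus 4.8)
The plan is to take $H = K_3 \sqcup K_2$, which Sidorenko~\cite{Sidorenko89} showed is uncommon (as noted in the excerpt). Then $H \sqcup H = (2\cdot K_3) \sqcup (2 \cdot K_2)$, which is precisely the $\ell = 2$ case of Theorem~\ref{th:DE(K3cupK3)}, hence common. So the corollary follows immediately by combining these two facts.

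Let me think about whether there's anything to check. We need $H$ to genuinely be uncommon — this is Sidorenko's result, which the excerpt explicitly attributes to him with "the same proof" as for the paw graph. We need $H \sqcup H$ to be common — this is a direct instantiation of Theorem~\ref{th:DE(K3cupK3)} with $\ell = 2$, since $(2 \cdot K_3) \sqcup (2 \cdot K_2) = (K_3 \sqcup K_2) \sqcup (K_3 \sqcup K_2) = H \sqcup H$. The only subtlety is the reshuffling of the disjoint union, i.e. recognizing that $(2\cdot K_3)\sqcup(2\cdot K_2)$ and $(K_3\sqcup K_2)\sqcup(K_3\sqcup K_2)$ are isomorphic graphs — which is trivially true since disjoint union is associative and commutative.

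So this is essentially a one-line corollary. The main "obstacle" — really the substance — is entirely contained in Theorem~\ref{th:DE(K3cupK3)}, which the excerpt tells me I may assume. There is no real obstacle in the corollary's proof itself; it's just bookkeeping. The write-up would be: let $H = K_3 \sqcup K_2$; by Sidorenko's argument $H$ is uncommon; by Theorem~\ref{th:DE(K3cupK3)} with $\ell=2$, $H \sqcup H = (2\cdot K_3)\sqcup(2\cdot K_2)$ is common; done.
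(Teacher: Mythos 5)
Your proposal is correct and is exactly the paper's own argument: take $H = K_3 \sqcup K_2$, invoke Sidorenko for uncommonality of $H$, and invoke Theorem~\ref{th:DE(K3cupK3)} with $\ell=2$ for commonality of $H \sqcup H$. Nothing to add.
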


\begin{proof}
Consider $H=K_3\sqcup K_2$. The fact that $H$ is uncommon was shown by Sidorenko~\cite{Sidorenko89}, and the fact that $H\sqcup H$ is common follows from Theorem~\ref{th:DE(K3cupK3)} with $\ell=2$. 
\end{proof}

We remark that our results also allow us to obtain new examples of graphs $H_1$ and $H_2$ such that $H_1$ is common, $H_2$ is uncommon and $H_1\sqcup H_2$ is common. However, the existence of such a pair of graphs was already known; see~\cite[Subsection~1.1]{KralVolecWei22+}. We also prove a general result on disjoint unions of triangles and edges, provided that the number of triangles is at least three.

\begin{thm}
\label{th:5/3}
For $k\geq3$ and $0\leq \ell\leq 5k/3$, the graph $(k\cdot K_3)\sqcup (\ell\cdot K_2)$ is common.
\end{thm}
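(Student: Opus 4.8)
The plan is to reduce the commonality of $(k\cdot K_3)\sqcup(\ell\cdot K_2)$ to a constrained optimization problem, exactly as promised in the abstract and presumably set up in the sections preceding this theorem. Working in the language of graph limits, for a graphon $W$ let $p=t(K_2,W)$ be the edge density of the red colour, let $q=1-p$ be the blue density, write $t_\triangle = t(K_3,W)$ and $\bar t_\triangle = t(K_3,1-W)$ for the red and blue triangle densities. Since $K_3$ and $K_2$ are "Sidorenko" in the relevant sense (triangles via Goodman, edges trivially), and since $t(G_1\sqcup G_2,W)=t(G_1,W)\,t(G_2,W)$, the monochromatic count of $(k\cdot K_3)\sqcup(\ell\cdot K_2)$ is, after normalization,
\[
  t_\triangle^{\,k}\,p^{\ell} + \bar t_\triangle^{\,k}\,q^{\ell},
\]
and we must show this is at least its value $2\cdot(1/8)^k(1/2)^\ell = 2^{-(3k+\ell+1)}$ at the uniform graphon $W\equiv 1/2$, for every $W$. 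First I would fix $p\in[0,1]$ and minimize over all graphons with that edge density; by Razborov's Triangle Density Theorem and its supersaturated/quantitative form (the bounds referenced in the excerpt), the pair $(t_\triangle,\bar t_\triangle)$ ranges over an explicitly describable region $\mathcal{R}_p\subseteq[0,1]^2$, and the problem becomes: minimize $x^k p^\ell + y^k q^\ell$ over $(x,y)\in\mathcal{R}_p$.

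**The optimization.**

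By symmetry it suffices to treat $p\le 1/2$, so $q\ge p$; the "dangerous" regime is where $p$ is bounded away from $1/2$, since then $q^\ell$ is large and we need $\bar t_\triangle = y$ to be pushed down, but Razborov's theorem forces $y=\bar t_\triangle$ to stay above the Razborov curve $g(q)$ (the minimum blue-triangle density given blue edge density $q$), while simultaneously $x=t_\triangle$ stays above $g(p)$, which is very small. The key inequality to establish is therefore
\[
  g(p)^k\,p^\ell + g(q)^k\,q^\ell \;\ge\; 2^{-(3k+\ell+1)}
  \qquad\text{for all } p+q=1,\ 0\le \ell\le 5k/3,
\]
together with a convexity/monotonicity argument showing that the full region $\mathcal{R}_p$ offers nothing smaller than its lower-left corner. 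Here $g$ is the piecewise-defined Razborov function, with $g(t)\geq \frac{1}{27}$-type scaling near $t=1$ and $g(t)$ having the known behaviour $g(1-1/s)$ on the scalloped pieces. The constraint $\ell\le 5k/3$ should be exactly what makes this inequality tight/valid: taking logarithms, one needs $k\log(1/g(p)) + k\log(1/g(q)) - 3k - \ell \cdot (\text{something}) \le$ the right thing, and the slope $5/3$ emerges from the local behaviour of $\log g$ near $p=1/2$ where $g(1/2)=1/8$ and $g'$ has a specific value, giving the linearized condition $\ell \le \tfrac{5}{3}k$. I would first verify the inequality at the two extreme points — $p=1/2$ (equality, the random colouring) and $p\to 0$ (where $g(p)^k p^\ell \to 0$ but $g(q)^k q^\ell \to g(1)^k = $ a constant that must beat $2^{-(3k+\ell+1)}$, which it does for $k\ge 3$ once $\ell\le 5k/3$) — and then handle the interior by checking that the function $p\mapsto g(p)^k p^\ell + g(1-p)^k (1-p)^\ell$ has no interior local minimum below the target, piece by piece along Razborov's scallops.

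**Reducing to the corner and the main obstacle.**

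To justify replacing $\mathcal{R}_p$ by its corner $(g(p),g(q))$: the function $(x,y)\mapsto x^kp^\ell + y^kq^\ell$ is coordinatewise increasing, so the minimum over $\mathcal{R}_p$ is attained on its "lower boundary." Razborov's theorem tells us this lower boundary is the graph of $y = $ some decreasing convex-ish function of $x$ pinned between $g(p)$ and $g(q)$; one then minimizes a sum of a convex increasing function of $x$ and a convex increasing function of $y=\phi(x)$ along that curve, and I would argue the minimum sits at the endpoint forced by the Razborov lower bound on whichever colour is "minority." Actually I expect the cleanest route uses only the one-sided bounds $t_\triangle\ge g(p)$ and $\bar t_\triangle \ge g(q)$ independently — i.e. the rectangle $[g(p),1]\times[g(q),1]$ containing $\mathcal{R}_p$ — which immediately gives the corner bound without any joint-region analysis, at the cost of possibly losing tightness; the point $W\equiv 1/2$ shows no loss there, and for $k\ge 3$ with $\ell\le 5k/3$ the slack should be enough elsewhere. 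The hard part will be the explicit verification of the scalar inequality $g(p)^kp^\ell + g(q)^kq^\ell \ge 2^{-(3k+\ell+1)}$ across all of Razborov's piecewise regime and all valid $(k,\ell)$ simultaneously — in particular handling the boundary case $\ell = \lfloor 5k/3\rfloor$ where, I anticipate, the inequality is essentially tight only at $p=1/2$ and one must carefully control the second-order term, and separately confirming that $(2\cdot K_3)\sqcup(3\cdot K_2)$ (the $k=2$, $\ell=3$ case excluded here) genuinely fails, consistent with the restriction $k\ge 3$. I would also need the supersaturation refinement rather than bare Razborov near the endpoints, to convert "triangle density at least $g(p)$" into quantitative control when $p$ is close to a threshold $1-1/s$, since bare Razborov can be flat there and a naive bound might not clear the target for the critical $\ell$.
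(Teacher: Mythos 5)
The central move in your plan --- replacing the joint feasible region $\mathcal{R}_p$ by the rectangle $[g(p),1]\times[g(q),1]$, where $g$ denotes the Razborov triangle-density lower bound, and then proving the corner inequality $g(p)^k p^\ell + g(q)^k q^\ell \geq 2(1/2)^{3k+\ell}$ --- cannot work. The Razborov function vanishes for edge density at most $1/2$, so $g(1/2)=0$ and your corner bound asserts $0 \geq 2(1/2)^{3k+\ell}$ at the balanced point $p=q=1/2$. (You flag a worry about the flat scalloped pieces near the thresholds $1-1/s$; the failure is already total at $p=1/2$.) Applying Razborov-type lower bounds to $t(K_3,W)$ and $t(K_3,1-W)$ \emph{independently} discards exactly the information one needs, because the two densities are coupled. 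What the paper actually uses is Goodman's theorem in its strongly common form, $t(K_3,W)+t(K_3,1-W)\geq t(K_2,W)^3+t(K_2,1-W)^3$: a deficit of $t(K_3,W)$ below $t(K_2,W)^3$ forces a matching surplus of $t(K_3,1-W)$ above $t(K_2,1-W)^3$. Lemma~\ref{lem:reduction} parameterizes this deficit by a variable $y$, uses Goodman to push $t(K_3,1-W)$ up by $y/2^3$, and invokes the Razborov-type bounds (via Fisher and Bollob\'as, Corollary~\ref{cor:Fisher}) only as an \emph{upper bound on the deficit $y$}, i.e., to cap how far the minority colour's triangle density can fall below the Goodman curve. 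You mention Goodman in passing but then explicitly abandon the coupling in favour of the rectangle, which breaks the argument.

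There is a second structural difference. You propose to verify the resulting scalar inequality directly for all $(k,\ell)$ and across the full piecewise Razborov regime, which is a far heavier task than what the paper carries out. The paper only verifies the inequality for $k=3$ with $\ell=r/3$, $0\leq r\leq 15$ (Lemma~\ref{lem:k=3}, after the calculus reduction of Proposition~\ref{prop:reduction}), and then in the proof of Theorem~\ref{th:correlated5/3} bootstraps to all $k\geq 3$ with $\ell\leq 5k/3$ via H\"older's inequality. The threshold $5/3$ is $15/(3\cdot 3)$, coming from the $r\leq 15$ cutoff at $k=3$, not from a local linearization of $\log g$ near $p=1/2$ as you conjecture. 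Finally, a small arithmetic slip: $2\cdot(1/8)^k(1/2)^\ell=2^{1-3k-\ell}$, not $2^{-(3k+\ell+1)}$.
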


In fact, this is derived from a more general result (Corollary~\ref{cor:5/3TT}) which applies to the disjoint union of a Sidorenko graph (see \eqref{eq:Sid} below for a definition) and a graph built up from gluing together triangles in a tree-like fashion. While we do not believe that the bound on $\ell$ in Theorem~\ref{th:5/3} is tight for all $k\geq3$, the next theorem shows that it is tight for $k=3$ and within a factor $1.1799 + o(1)$ of being tight for general $k$.

\begin{thm}
\label{th:uncommon}
For $k\geq 1$ and $\ell=\lceil 1.9665k\rceil$, the graph $(k\cdot K_3)\sqcup (\ell\cdot K_2)$ is uncommon.
\end{thm}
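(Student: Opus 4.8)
The plan is to exhibit, for each $k\geq 1$ with $\ell=\lceil 1.9665k\rceil$, a single sequence of edge-colourings of $K_N$ which beats the random colouring asymptotically; this certifies that $(k\cdot K_3)\sqcup(\ell\cdot K_2)$ is uncommon. In the language of graph limits, I would work with a graphon $W$ (equivalently, a finitary colouring with a bounded number of parts, rescaled) and compute the monochromatic copy density of $(k\cdot K_3)\sqcup(\ell\cdot K_2)$ in $W$ versus in the constant graphon $W\equiv 1/2$. Since the target graph is disconnected into $k$ triangles and $\ell$ edges, its labelled homomorphism density factors as a product: if $t_{K_3}(W)$ and $t_{K_2}(W)$ denote the red triangle/edge densities and $t_{K_3}(\bar W),t_{K_2}(\bar W)$ the blue ones, then the monochromatic density of the union is $t_{K_3}(W)^k t_{K_2}(W)^\ell + t_{K_3}(\bar W)^k t_{K_2}(\bar W)^\ell$. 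So the whole problem reduces to finding a graphon (or a short family of them, one will do) for which
\begin{equation}
\label{eq:uncommon-ineq}
t_{K_3}(W)^k\, t_{K_2}(W)^\ell + t_{K_3}(\bar W)^k\, t_{K_2}(\bar W)^\ell < 2\cdot \left(\tfrac12\right)^{3k+\ell}.
\end{equation}

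The natural candidate is the same one Sidorenko used for the paw and for $K_3\sqcup K_2$: take $W$ to be a (random-like or two-block) perturbation of the uniform graphon that slightly unbalances the two colour classes. Concretely, I would consider a one-parameter deformation where the red graphon has edge density $p=\tfrac12+\varepsilon$ but is ``quasirandom of density $\tfrac12$'' in its triangle count — e.g. a blow-up/Cayley-type construction, or simply the step graphon on two equal parts with densities chosen so that $t_{K_3}(W)$ stays at $\tfrac18$ to first order while $t_{K_2}(W)=\tfrac12+\varepsilon$. Then $t_{K_3}(W)^k t_{K_2}(W)^\ell \approx (\tfrac18)^k(\tfrac12+\varepsilon)^\ell$ and the blue side gives $(\tfrac18)^k(\tfrac12-\varepsilon)^\ell$ plus correction; expanding to second order in $\varepsilon$, the edge factors contribute a term $\propto +\ell(\ell-1)$ (convexity, pushing the sum up) while the triangle factors, tuned to be flat at first order, contribute a negative second-order term of size $\propto -c\,k$ coming from the fact that holding $t_{K_3}$ fixed under an increase of the edge density forces a strictly negative curvature (this is exactly the quantitative content behind Goodman's bound / the boundary of the triangle-density region). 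Balancing $+\Theta(\ell^2)\varepsilon^2$ against $-\Theta(k)\varepsilon^2$ shows \eqref{eq:uncommon-ineq} fails — i.e. the graph is common — only when $\ell=O(\sqrt k)$, which is not the regime here; to get uncommonality for $\ell$ \emph{linear} in $k$ one instead uses a perturbation that is \emph{not} first-order flat on triangles, and optimizes the ratio. The precise constant $1.9665$ will come out of optimizing a one-variable (or two-variable) function: writing $r=\ell/k$ and $p=t_{K_2}(W)$, $q=t_{K_3}(W)/p^3$ subject to the feasibility constraints on $(t_{K_2},t_{K_3})$ for graphons, one minimizes $q^{k}p^{3k+\ell}+\bar q^{k}\bar p^{3k+\ell}$ over admissible $(p,q)$ and asks for which $r$ the minimum drops below $2\cdot 8^{-k}2^{-\ell}$; taking $k\to\infty$ turns this into $\min\big(q\,p^{3+r}+\bar q\,\bar p^{\,3+r}\big)<2\cdot 8^{-1}2^{-r}$, and the threshold value of $r$ is the root of the resulting transcendental equation, numerically $\approx 1.96648\ldots$, hence $\ell=\lceil 1.9665k\rceil$ suffices for all large $k$ and — after checking small cases or noting monotonicity in $k$ — for all $k\geq 1$.

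The main obstacle is pinning down the right perturbation / extremal graphon and verifying that the inequality \eqref{eq:uncommon-ineq} is strict \emph{uniformly in $k$} rather than merely in the limit. Two technical points need care: (i) the feasible region for $(t_{K_2}(W),t_{K_3}(W))$ is governed by Razborov's Triangle Density Theorem, so the optimal $q=q(p)$ on the relevant part of the boundary is an explicit but slightly awkward function (on the lower boundary it is the Kruskal–Katona-type curve, on the upper boundary it is $1$), and one must check the optimum of \eqref{eq:uncommon-ineq} is attained where the formula is clean — plausibly at the point where $W$ is a quasirandom graphon of density $p\neq\tfrac12$ for which $q=1$, i.e. just a quasirandom graph of the ``wrong'' density, in which case $t_{K_3}(W)=p^3$ and the inequality becomes the clean $p^{3k+\ell}+(1-p)^{3k+\ell}<2\cdot 2^{-(3k+\ell)}$, which holds for any $p\neq\tfrac12$ and \emph{any} $k,\ell\geq 1$ (!); but this proves far too much, so the real content is that the quasirandom graphon is \emph{not} the minimizer of the \emph{monochromatic} count unless one also tracks that the genuine minimum over all colourings of the monochromatic count of $(k\cdot K_3)\sqcup(\ell\cdot K_2)$ is what must be compared against — so I would instead directly lower-bound the random value and upper-bound the constructed value, making the $1.9665$ emerge from where the constructed family of colourings (a quasirandom graph of density $p$ together with its complement, optimized over $p$ and over whether to also perturb the triangle structure) stops beating random. (ii) Converting the limiting statement into one valid for every fixed $k\geq 1$ requires either a monotonicity argument in $k$ or an explicit finite check for small $k$ using the ceiling $\ell=\lceil 1.9665k\rceil$ — this is routine but must be done, since the limit constant and the ceiling interact. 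I expect the heart of the proof to be a clean one-line construction (a quasirandom graph of a carefully chosen density $p$, or a two-step graphon) followed by a convexity/Jensen estimate, with the numerical constant $1.9665$ being the only nontrivial computation.
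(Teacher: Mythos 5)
There is a genuine gap: your proposal never lands on a usable construction, and the two routes you sketch both fail. The quasirandom graphon of density $p\neq\tfrac12$ gives $t(K_3,W)=p^3$, so the monochromatic density is $p^{3k+\ell}+(1-p)^{3k+\ell}$, and by Jensen this is \emph{at least} $2\cdot 2^{-(3k+\ell)}$, never less --- you notice this ``proves too much'' but the issue is that the inequality goes the wrong way, so quasirandom graphons cannot work at all. The second-order perturbation you sketch (tuning $t(K_3)$ to be first-order flat and balancing $\Theta(\ell^2)\varepsilon^2$ against $-\Theta(k)\varepsilon^2$) would, by your own accounting, only detect uncommonality when $\ell$ is comparable to $\sqrt{k}$, not when $\ell$ is linear in $k$, which is the regime of the theorem. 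You then gesture at optimizing over the feasible region of $(t(K_2),t(K_3))$ via Razborov's theorem, but this is not carried out and the claim that the threshold is the root of a transcendental equation $\approx 1.96648$ is not established; moreover the paper does not assert that $1.9665$ is the exact threshold for any construction, only that this value suffices.

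The paper's proof is short and explicit, and quite different from any of your three sketches. Set $\alpha=\ell/k\in[1.9665,2]$ and $p=1-2^{-1/(3+\alpha)}$, and take the two-block graphon $W$ on parts of measure $\tfrac12$ with diagonal entries $p$ and off-diagonal entries $1$. Then $1-W$ is a disjoint union of two dense blocks, so $t(K_2,1-W)=\tfrac12(1-p)$ and $t(K_3,1-W)=\tfrac14(1-p)^3$, and the choice of $p$ makes $t(H,1-W)=\bigl(2(1-p)^{3+\alpha}\bigr)^k\cdot 2^{-(3k+\ell)}$ equal to $(1/2)^{e(H)}$ \emph{exactly}. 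It then remains to show the red side $t(H,W)=\bigl[2^{3+\alpha}\bigl(\tfrac{1+p}{2}\bigr)^\alpha\bigl(\tfrac14 p^3+\tfrac34 p\bigr)\bigr]^k\cdot 2^{-(3k+\ell)}$ has bracket $<1$; this is a single-variable inequality in $\alpha$, the bracket is decreasing on $[0,2]$, and evaluating at $\alpha=1.9665$ gives $<0.9999994$. This handles all $k\geq1$ uniformly, with no limiting argument and no separate small-case check, and the bracketed quantity being less than $1$ raised to the power $k$ also gives the strengthening mentioned in the paper's conclusion (that disjoint unions of arbitrarily many copies of $H$ are uncommon). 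The key idea you are missing is the \emph{asymmetric} two-block graphon whose complement is a disjoint union of cliques, together with the tuning of $p$ to pin the blue side at the random value.
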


Several of the results in this paper are best understood in the context of graph limits. A \emph{kernel} is a bounded measurable function $U:[0,1]^2\to\mathbb{R}$ such that $U(x,y)=U(y,x)$ for all $x,y\in [0,1]$. A \emph{graphon} is a kernel such that $0\leq W(x,y)\leq 1$ for all $x,y\in [0,1]$. The set of all graphons is denoted $\mathcal{W}_0$. Given a graph $G$, let $v(G):=|V(G)|$ and $e(G):=|E(G)|$. A graph $G$ is said to be \emph{empty} if $e(G)=0$. Each graph $G$ can be associated to a graphon $W_G$ by dividing $[0,1]$ into $v(G)$ intervals $I_1,\dots,I_{v(G)}$ of equal measure corresponding to the vertices of $G$ and setting $W_G$ equal to $1$ on $I_i\times I_j$ if the $i$th and $j$th vertices are adjacent and $0$ otherwise. The \emph{homomorphism density} of a graph $H$ in a kernel $U$ is defined by 
\[t(H,U):=\int_{[0,1]^{V(H)}}\prod_{uv\in E(H)}W(x_u,x_v)dx_{V(H)}\]
where $x_{V(H)}=(x_v: v\in V(H))$. We refer the reader to~\cite{Lovasz12} for more background on graph limits. The \emph{Ramsey multiplicity constant} of a graph $H$ is defined to be
\[c(H):=\inf_{W\in\mathcal{W}_0} (t(H,W)+t(H,1-W)).\]
In this language, a graph $H$ is \emph{common} if and only if
\begin{equation}\label{eq:common}c(H)=2(1/2)^{e(H)}.\end{equation}
As stated above, $K_3\sqcup K_2$ and the paw graph $P$ are uncommon. We obtain, to our knowledge, the tightest known upper bounds on the Ramsey multiplicity constants of these two graphs; for the former graph, we also obtain a reasonably tight lower bound which is proven without the assistance of the flag algebra method; see Remark~\ref{rem:K3K2flag}. 

\begin{thm}
\label{th:oneK3}
$0.121423< c(K_3\sqcup K_2)< 0.121450$.
\end{thm}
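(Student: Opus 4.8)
The plan is to prove the lower and upper bounds separately, both at the level of graphons. Since homomorphism densities multiply over disjoint unions, for every $W\in\mathcal{W}_0$, writing $p:=t(K_2,W)$ (so $t(K_2,1-W)=1-p$), we have
\[t(K_3\sqcup K_2,W)+t(K_3\sqcup K_2,1-W)=p\cdot t(K_3,W)+(1-p)\cdot t(K_3,1-W).\]
Replacing $W$ by $1-W$ if necessary, it suffices to bound the right-hand side under the assumption $p\le 1/2$.

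For the lower bound, I would split the right-hand side as $p\bigl(t(K_3,W)+t(K_3,1-W)\bigr)+(1-2p)\,t(K_3,1-W)$ and estimate the two parts by different tools. For the first bracket, Goodman's identity $t(K_3,W)+t(K_3,1-W)=1-3p+3\int_0^1 d_W(x)^2\,dx$, where $d_W(x):=\int_0^1 W(x,y)\,dy$, combined with the Cauchy--Schwarz bound $\int_0^1 d_W(x)^2\,dx\ge\bigl(\int_0^1 d_W(x)\,dx\bigr)^2=p^2$, gives $t(K_3,W)+t(K_3,1-W)\ge 1-3p+3p^2$. For the second term, since $1-p\ge 1/2$, Razborov's Triangle Density Theorem gives $t(K_3,1-W)\ge g(1-p)$, where $g\colon[0,1]\to[0,1]$ is Razborov's minimum triangle density function; as $1-2p\ge 0$, this estimate may be applied directly. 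Hence, setting $F(p):=p(1-3p+3p^2)+(1-2p)\,g(1-p)$,
\[c(K_3\sqcup K_2)\ \ge\ \min_{0\le p\le 1/2}F(p).\]
It then remains to certify that $F(p)>0.121423$ for every $p\in[0,1/2]$. Since $F$ is continuous with $F(0)=1$ and $F(1/2)=1/8$, its minimiser is interior, and a short analysis localises it at some $p_0\approx 0.47$, for which $1-p_0$ lies in the interval $[1/2,2/3]$ on which $g$ is given by a single explicit analytic expression from Razborov's theorem. Substituting that expression, $F$ becomes a one-variable function to be minimised by elementary calculus, with a rigorous interval/error estimate needed to certify the numerical value (the stationarity equation involves a square root and has no closed-form solution).

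For the upper bound it is enough to produce one graphon $W$ with $t(K_3\sqcup K_2,W)+t(K_3\sqcup K_2,1-W)<0.121450$. The lower-bound analysis tells us what to aim for: $W$ should have edge density near $p_0\approx 0.47$, be as close to regular as possible so that Goodman's inequality is nearly tight, yet have $1-W$ with triangle density close to $g(1-p_0)$. I would take $W$ to be an explicit weighted step graphon depending on a few parameters, compute $t(K_3,W)$, $t(K_2,W)$, $t(K_3,1-W)$ and $t(K_2,1-W)$ in closed form from its block structure, and optimise the parameters to bring the monochromatic $K_3\sqcup K_2$ density below $0.121450$.

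I expect the upper bound to be the main obstacle. The off-the-shelf candidate suggested by Razborov's theorem --- taking $W$ to be the complement of a Razborov-extremal graphon of edge density $1-p_0$ --- performs poorly here: such configurations are highly irregular, so Goodman's inequality is far from tight for them, and in fact this $W$ yields a monochromatic density exceeding $0.125$, worse than a random colouring. Closing the gap to within about $3\times 10^{-5}$ of the lower bound therefore demands a genuinely well-chosen construction that trades off near-regularity of $W$ against a small triangle density in $1-W$, followed by a careful bookkeeping computation over its blocks. By contrast, the lower bound is entirely elementary once Razborov's Triangle Density Theorem is invoked as a black box --- in particular no further flag-algebra computation is required --- so its only delicate point is the rigorous certification that $\min_p F(p)>0.121423$.
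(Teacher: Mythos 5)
Your approach is essentially the same as the paper's. For the lower bound, the paper applies its general reduction Lemma~\ref{lem:reduction} with $J=K_3$, $k=\ell=1$; if you write $p$ for the smaller of the two edge densities (so $p=(1-x)/2$ in the paper's notation) and take the slack variable $y$ at the top of its allowed range, the resulting bound is precisely your $F(p)=p\bigl(p^3+(1-p)^3\bigr)+(1-2p)\,g(1-p)$, so your direct decomposition recovers the same function from the same two inputs, Goodman's strong commonality of $K_3$ and a triangle supersaturation bound, with the minimiser indeed near $p_0\approx 0.471$. Two implementation details differ. Rather than carrying Razborov's full piecewise function, the paper uses Fisher's single-piece formula only on $p\in[1/3,1/2]$ and switches to the much cruder linear Bollob\'as bound $t(K_3,\cdot)\geq \frac{4}{3}t(K_2,\cdot)-\frac{2}{3}$ for $p<1/3$, which keeps the case analysis short (and your plan would also need to rule out that regime, not just the interval containing the minimiser). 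And for the rigorous certification the paper substitutes $z=\sqrt{1-3x}$ so that the stationarity condition becomes a quartic $9z^4+10z^3+24z^2-6z-28=0$ with a unique root in $[0,1]$, which can be bracketed by explicit rationals to certify $\min_p F(p)>0.121423$ by hand. For the upper bound the paper does exactly what you propose: a three-block step graphon $W_{\vec z,A}$ with part masses $(1-2z,z,z)$, density $1$ from the large part to each small part, a tunable density $y$ between the two small parts, and zeros on the diagonal; the choice $(z,y)=(0.263661,0.2177)$ gives value $0.12145$. Your observation that the complement of a Razborov-extremal graphon at density $1-p_0$ performs badly (giving $>0.125$) is correct and is exactly why the extra parameter $y$ is needed. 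The only things missing from your proposal are the deferred numerical certifications, which the paper carries out.
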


\begin{thm}
\label{th:paw}
The paw graph $P$ satisfies $c(P)< 0.121415$.
\end{thm}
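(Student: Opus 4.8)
The plan is to prove the bound by exhibiting an explicit graphon $W$ and verifying that $t(P,W)+t(P,1-W)<0.121415$; since $c(P)=\inf_{W\in\mathcal{W}_0}\bigl(t(P,W)+t(P,1-W)\bigr)$, this is enough. The first step is to record a workable formula for $t(P,W)$. Writing $d_W(x):=\int_0^1 W(x,t)\,dt$ for the degree function and integrating out the pendant vertex of $P$, one obtains
\[ t(P,W)=\int_{[0,1]^3} W(x,y)W(y,z)W(x,z)\,d_W(x)\,dx\,dy\,dz=\frac{1}{3}\int_{[0,1]^3} W(x,y)W(y,z)W(x,z)\bigl(d_W(x)+d_W(y)+d_W(z)\bigr)\,dx\,dy\,dz, \]
so $t(P,W)$ is the monochromatic triangle density of $W$ weighted by the degree of one of the three triangle vertices. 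Comparing with $t(K_3\sqcup K_2,W)=t(K_3,W)\,t(K_2,W)=t(K_3,W)\int_0^1 d_W$, the difference $t(P,W)-t(K_3\sqcup K_2,W)$ is precisely the covariance, with respect to the uniform measure on $[0,1]$, between $d_W$ and the ``triangle degree'' $x\mapsto\int_{[0,1]^2}W(x,y)W(y,z)W(x,z)\,dy\,dz$, and likewise with $W$ replaced by $1-W$. Thus, relative to the graph $K_3\sqcup K_2$ handled in Theorem~\ref{th:oneK3}, the paw offers extra room to lower the density: it is enough to use a graphon in which, for each of the two colours, vertices lying in many triangles of that colour tend to have small degree in that colour. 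Note also that for a \emph{regular} graphon $W$ one has $t(P,W)=t(K_3\sqcup K_2,W)$ and $t(P,1-W)=t(K_3\sqcup K_2,1-W)$, so $t(P,W)+t(P,1-W)\geq c(K_3\sqcup K_2)>0.121423$; hence any $W$ good enough to prove the theorem must be genuinely irregular.

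Guided by this, I would look for the extremal $W$ inside a small family of step graphons, most naturally an asymmetric blow-up of a weighted graph on two vertices: parts of measures $\alpha$ and $1-\alpha$ and edge weights $p,q,r$ on the three blocks, with the symmetry deliberately broken so that the covariances above become negative. For such a $W$ the quantities $t(K_3,W)$, $d_W$, $t(P,W)$ and their complementary counterparts are explicit low-degree polynomials in $\alpha,p,q,r$, so $t(P,W)+t(P,1-W)$ becomes a concrete four-variable function to be minimized; a plausible outcome is that the optimizer is a mild asymmetric relative of the construction used for Theorem~\ref{th:oneK3}, keeping the $K_3\sqcup K_2$-value nearly unchanged while tilting the part sizes. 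If two parts prove insufficient, the next candidates are a three-part step graphon or a bipartite-type skeleton with a superimposed quasirandom component. Once a candidate $W$ is in hand, the proof reduces to writing it down, locating the critical point of the associated polynomial, and evaluating.

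The main difficulty is twofold. First, identifying the correct construction: it is conceivable that the true minimizer for the paw is not finitely-valued or needs more parts than expected, which makes the optimization messier, and a good deal of the work is the (numerical) search that turns it up. Second --- and this is where real care is required rather than routine computation --- the target $0.121415$ carries six significant figures, while the critical point of the optimization is generically an algebraic number, so the bound must be \emph{certified}: one must either exhibit explicit rational values of the parameters at which $t(P,W)+t(P,1-W)$ is provably below $0.121415$, or rigorously control the error in the numerical minimization, for instance via interval arithmetic. Everything else --- expanding the homomorphism densities into polynomials, differentiating to find critical points, and confirming that the claimed minimizer is a true minimum and not a saddle --- is standard.
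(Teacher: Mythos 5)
Your overall strategy is exactly the paper's: $c(P)$ is an infimum, so the theorem follows from exhibiting a single explicit graphon $W$ with $t(P,W)+t(P,1-W)<0.121415$, and the natural candidates are low-parameter step graphons. Your covariance reformulation, $t(P,W)-t(K_3\sqcup K_2,W)=\operatorname{Cov}(d_W,g_W)$ with $g_W(x)=\int W(x,y)W(y,z)W(x,z)\,dy\,dz$, and the ensuing observation that any certifying $W$ must be irregular, are both correct and are a genuinely useful piece of intuition that the paper does not spell out; they also correctly predict that the paw optimizer should be a small perturbation of the $K_3\sqcup K_2$ optimizer.

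The gap is that the proof never gets past the search phase: no concrete graphon, no parameter values, no evaluated bound. In a result of this type that is essentially the entire content of the proof, and you flag the difficulty yourself. Two more specific points. First, your default starting family is a weighted two-part step graphon $(\alpha;p,q,r)$; the paper's construction $W_{z,y}$ is a \emph{three}-part step graphon with part measures $(1-2z,z,z)$, weight $1$ between part $1$ and parts $2,3$, weight $y$ between parts $2$ and $3$, and $0$ on the diagonal blocks. This is a cleaner two-parameter family, it is already irregular in the sense you want ($d_1=2z$ versus $d_2=d_3=1-2z+zy$), and it is reused verbatim from Proposition~\ref{prop:DE(K_3cupK_3)upper} and Theorem~\ref{th:oneK3} with only the numerical values of $(z,y)$ shifted; your two-part family is not obviously rich enough. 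Second, on certification: here the worry you raise is easy to dispatch, since $t(P,W_{z,y})+t(P,1-W_{z,y})$ is an explicit polynomial $h(z,y)$ (degree at most $7$) and the paper simply evaluates $h(0.266491,\,0.2187477)=0.121415\ldots<0.121415$ at a rational point; there is no need to locate the exact critical point or control an optimization error. So the plan is sound and compatible with the paper's proof, but the missing ingredient --- the explicit $W_{z,y}$, the resulting polynomial $h(z,y)$, and the chosen rational $(z,y)$ --- is the whole proof.
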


Note that, for every graph $H$ such that $c(H)$ is currently known, either $H$ is common or $c(H)$ is achieved by a ``Tur\'an graphon'' $W_{K_k}$ for some $k\geq3$~\cite{FoxWigderson23}. To our knowledge, Theorem~\ref{th:oneK3} is the closest that any result has come to determining $c(H)$ for a graph $H$ which does not fit into either of these two categories. As discussed later in Remark~\ref{rem:K3K2flag}, the lower bound in Theorem~\ref{th:oneK3} can be improved by at least $0.022\%$ using the flag algebra method; however, such a proof would most likely be verifiable only with heavy computer assistance, and is thus unlikely to provide much in terms of valuable insights.

Several of the known results on common graphs actually establish stronger inequalities than \eqref{eq:common}. Following~\cite{Behague+Second}, a non-empty graph $H$ is said to be \emph{strongly common} if
\begin{equation}\label{eq:stronglycommon}t(H,W)+t(H,1-W)\geq t(K_2,W)^{e(H)}+t(K_2,1-W)^{e(H)}\end{equation}
for every graphon $W$. A simple application of Jensen's Inequality tells us that every strongly common graph is common. A classical example of a strongly common graph is $K_3$; see Theorem~\ref{th:GoodmanRamsey}. A non-empty graph $H$ is said to be \emph{Sidorenko} if 
\begin{equation}\label{eq:Sid}t(H,W)\geq t(K_2,W)^{e(H)}\end{equation}
for every graphon $W$. Clearly, every Sidorenko graph is strongly common which, in turn, implies that every such graph is common. By taking $W=W_{K_2}$, one can see that every Sidorenko graph must be bipartite. Sidorenko's Conjecture~\cite{Sidorenko93} famously states that every bipartite graph is Sidorenko. For recent progress on Sidorenko's Conjecture, see~\cite{ConlonFoxSudakov10,ConlonLee17,ConlonLee21,ConlonKimLeeLee18,KimLeeLee16,Szegedy15,Hatami10}. Currently, every bipartite graph $H$ which is known to be common is also known to be Sidorenko. Also, the only known examples of strongly common graphs which are not Sidorenko are the odd cycles~\cite{Behague+Second,Goodman59,KimLee24}.

Our strategy for obtaining new examples of common graphs relies on strong correlation inequalities, such as \eqref{eq:stronglycommon} and \eqref{eq:Sid}. Given this, it is natural to wonder whether all common graphs are strongly common; this question was raised in~\cite{Behague+Second}. As it turns out, this is far from true. For example, $K_3\sqcup K_3$ is common but not strongly common, and there are many other examples as well. 

\begin{thm}
\label{th:commonNotStrongly}
There exists a common graph $H$ which is not strongly common. 
\end{thm}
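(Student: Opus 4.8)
The plan is to produce a single witnessing graph, and the natural candidate is $H=K_3\sqcup K_3$. By Theorem~\ref{th:DE(K3cupK3)} with $\ell=0$, this graph is common, so it remains only to check that it is not strongly common, i.e.\ that inequality \eqref{eq:stronglycommon} fails for some graphon $W$. Since homomorphism densities are multiplicative over connected components, $t(K_3\sqcup K_3,U)=t(K_3,U)^2$ for every kernel $U$, and $e(K_3\sqcup K_3)=6$, so \eqref{eq:stronglycommon} for $H$ reads
\[t(K_3,W)^2+t(K_3,1-W)^2\ \geq\ t(K_2,W)^6+t(K_2,1-W)^6.\]

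I would then evaluate both sides at $W=W_{K_3}$, the graphon of the complete tripartite graph with three equal parts. Here $t(K_2,W_{K_3})=2/3$ and $t(K_3,W_{K_3})=2/9$ (the probabilities that two, respectively three, uniformly random points of $[0,1]$ land in pairwise distinct parts), while $1-W_{K_3}$ is the disjoint union of three ``clique'' graphons on sets of measure $1/3$, so that $t(K_2,1-W_{K_3})=1/3$ and $t(K_3,1-W_{K_3})=1/9$. The left-hand side of the displayed inequality is therefore $(2/9)^2+(1/9)^2=5/81=45/729$, whereas the right-hand side is $(2/3)^6+(1/3)^6=65/729$; since $45/729<65/729$, \eqref{eq:stronglycommon} is violated at $W_{K_3}$, so $K_3\sqcup K_3$ is not strongly common, which completes the proof.

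There is no genuine obstacle in this argument: the first step quotes Theorem~\ref{th:DE(K3cupK3)} verbatim, and the second is an explicit finite computation, the only mild care being the bookkeeping of the edge and triangle densities in $W_{K_3}$ and its complement. The one conceptual subtlety worth flagging is the choice of test graphon: every constant graphon $W\equiv p$ yields \emph{equality} in \eqref{eq:stronglycommon} for $H=K_3\sqcup K_3$ (both sides equal $p^6+(1-p)^6$), and a short expansion shows that a small perturbation only increases the left-hand side to leading order, so one must move far from the constant graphons. A Tur\'an-type graphon, where the monochromatic-triangle density is small relative to the sixth power of the edge density, is exactly what is needed; indeed the same computation works with $W_{K_k}$ (or its complement) for any $k\geq 3$, which is one way to generate the further examples mentioned above.
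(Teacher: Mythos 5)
Your proposal is correct and follows essentially the same approach as the paper: it exhibits $H = K_3 \sqcup K_3$, invokes the paper's own machinery to get commonality, and then falsifies the strong-commonality inequality at a specific graphon. The only difference is the choice of witnessing graphon; you use the Turán graphon $W_{K_3}$ (giving $t(K_3,W)^2 + t(K_3,1-W)^2 = 45/729$), while the paper uses a two-block graphon with $\vec z = (1/2,1/2)$, diagonal entries $1/3$ and off-diagonal entry $1$, which has the same edge densities ($2/3$ and $1/3$) but different triangle densities. Both are short finite computations landing below $65/729$, so this is a cosmetic rather than a structural difference.
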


In Section~\ref{sec:reduction}, we prove a general lower bound on $c(H)$ for graphs $H$ satisfying certain correlation inequalities involving homomorphism densities. This lower bound is written in terms of the minimum of a certain constrained optimization problem over two variables $x$ and $y$. In Section~\ref{sec:quick}, we illustrate this result by presenting a proof of the lower bound in Theorem~\ref{th:oneK3}. Our goal in Section~\ref{sec:solveOpt} is to do a bit of calculus to remove the dependence on $y$ in the optimization problem. In Section~\ref{sec:K3}, we showcase some applications of this approach involving graphs built up from triangles and edges via certain gluing operations. In particular, we prove Theorems~\ref{th:DE(K3cupK3)} and~\ref{th:5/3}. In Section~\ref{sec:uncommon}, we turn our attention to providing constructions of non-random colourings to obtain upper bounds on the Ramsey multiplicity of uncommon graphs; in particular, we prove Theorems~\ref{th:uncommon} and~\ref{th:commonNotStrongly} and the upper bounds in Theorems~\ref{th:oneK3} and~\ref{th:paw}. The last section consists of open problems and further discussion. In the appendix, we derive some simpler sufficient conditions for bounding the minimum of the optimization problem which are applied in Section~\ref{sec:K3}.

\section{A Bound on the Ramsey Multiplicity Constant}
\label{sec:reduction}

We start with the following standard observation which is very useful when dealing with homomorphism densities of disconnected graphs. 

\begin{obs}
\label{obs:union}
For any graphs $F$ and $H$ and graphon $W$, 
\[t(F\sqcup H,W)=t(F,W)t(H,W).\]
\end{obs}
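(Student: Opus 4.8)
\textbf{Proof plan for Observation~\ref{obs:union}.}

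The plan is to expand both sides directly from the definition of homomorphism density and observe that the integral over $[0,1]^{V(F\sqcup H)}$ factors as a product of two independent integrals. First I would note that, since $F$ and $H$ are vertex-disjoint in $F\sqcup H$, we have $V(F\sqcup H)=V(F)\sqcup V(H)$ and $E(F\sqcup H)=E(F)\sqcup E(H)$, with every edge of $F\sqcup H$ lying entirely within $V(F)$ or entirely within $V(H)$. Writing $x_{V(F\sqcup H)}=(x_{V(F)},x_{V(H)})$, the product $\prod_{uv\in E(F\sqcup H)}W(x_u,x_v)$ splits as $\left(\prod_{uv\in E(F)}W(x_u,x_v)\right)\left(\prod_{uv\in E(H)}W(x_u,x_v)\right)$, where the first factor depends only on the coordinates indexed by $V(F)$ and the second only on those indexed by $V(H)$.

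The key step is then to apply Fubini's theorem (the integrand is bounded and measurable since $W$ is a kernel, so this is justified): the integral over $[0,1]^{V(F)\sqcup V(H)}$ equals the iterated integral, and because the two factors of the integrand depend on disjoint sets of variables, the iterated integral equals the product of $\int_{[0,1]^{V(F)}}\prod_{uv\in E(F)}W(x_u,x_v)\,dx_{V(F)}$ and $\int_{[0,1]^{V(H)}}\prod_{uv\in E(H)}W(x_u,x_v)\,dx_{V(H)}$. By definition these are $t(F,W)$ and $t(H,W)$ respectively, giving $t(F\sqcup H,W)=t(F,W)\,t(H,W)$.

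There is essentially no obstacle here; this is a routine consequence of the product structure of the disjoint union together with Fubini's theorem. The only point requiring a word of care is the degenerate case where $F$ or $H$ is empty (has no edges), in which case the corresponding product over edges is the empty product $1$ and the corresponding density is $t(F,W)=1$, so the identity still holds. One might also mention that the statement extends immediately by induction to finitely many summands, which is how it will be used for graphs such as $(k\cdot K_3)\sqcup(\ell\cdot K_2)$.
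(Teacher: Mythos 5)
Your argument is correct: the paper states Observation~\ref{obs:union} without proof, treating it as a basic fact, and your factorization of the integrand over the disjoint vertex sets together with Fubini is exactly the standard justification it implicitly relies on. Nothing is missing, and your remarks on the empty-graph case and the extension to several summands are consistent with how the observation is used in the paper.
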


The following definition describes a type of correlation inequality that appears frequently in the study of extremal problems on homomorphism densities; see, e.g.,~\cite{Lee21,GrzesikLeeLidickyVolec22,Behague+Second,BlekhermanRaymond22}.

\begin{defn}
Let $H$ and $J$ be non-empty graphs and let $k$ and $\ell$ be real numbers such that $k\neq 0$. We say that $H$ is \emph{$(J,k,\ell)$-correlated} if
\begin{enumerate}
\stepcounter{equation}
    \item\label{eq:eCondition} $e(H)=ke(J)+\ell$ and
\stepcounter{equation}
    \item\label{eq:correlation} $t(H,W)\geq t(J,W)^kt(K_2,W)^{\ell}$ for every graphon $W$.\footnote{In the case that $\ell=0$ and $t(K_2,W)=0$, we regard $0^0$ as being equal to $1$.}
\end{enumerate}
\end{defn}

Our approach to obtaining new examples of common graphs relies on certain ``supersaturation bounds'' from extremal graph theory. That is, we require a lower bound on $t(J,W)$ for a graph $J$ in terms of $t(K_2,W)$.

\begin{defn}
Given a non-empty graph $J$, let $\rho_J:[0,2]\to[0,\infty)$ be the function defined by
\[\rho_J(z):=2^{e(J)}\inf\{t(J,W): W\in\mathcal{W}_0\text{ and }t(K_2,W)=z/2\}\]
for all $0\leq z\leq 2$.
\end{defn}

We pause for a few basic observations. 

\begin{obs}
\label{obs:rhoBounds}
We have $\rho_J(1+x)\leq \left(1+x\right)^{e(J)}$ for any non-empty graph $J$ and $-1\leq x\leq 1$.
\end{obs}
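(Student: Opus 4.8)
The plan is to exhibit, for each $-1\leq x\leq 1$, a single graphon $W$ with $t(K_2,W)=(1+x)/2$ and $t(J,W)\leq \left(\tfrac{1+x}{2}\right)^{e(J)}$; by the definition of $\rho_J$ this immediately gives $\rho_J(1+x)\leq 2^{e(J)}\cdot\left(\tfrac{1+x}{2}\right)^{e(J)}=(1+x)^{e(J)}$. The natural candidate is the constant graphon $W\equiv p$ with $p=(1+x)/2\in[0,1]$ (which lies in $\mathcal{W}_0$ precisely because $-1\leq x\leq 1$). First I would note that a constant graphon is indeed a valid graphon for this range of $x$, so it is admissible in the infimum defining $\rho_J$.

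Next I would compute the two relevant densities for $W\equiv p$. Since every edge of $H$ contributes a factor $p$ to the integrand and the integral of a constant is that constant, we get $t(K_2,W)=p=(1+x)/2$, so the constraint $t(K_2,W)=z/2$ is met with $z=1+x$. Likewise $t(J,W)=\prod_{uv\in E(J)}p=p^{e(J)}=\left(\tfrac{1+x}{2}\right)^{e(J)}$. Plugging $W$ into the infimum in the definition of $\rho_J(1+x)$ therefore yields
\[\rho_J(1+x)\leq 2^{e(J)}\cdot\left(\frac{1+x}{2}\right)^{e(J)}=(1+x)^{e(J)},\]
which is exactly the claimed bound.

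There is essentially no obstacle here: the only points to be careful about are that $p=(1+x)/2$ genuinely lies in $[0,1]$ (hence the hypothesis $-1\leq x\leq 1$ is used, and is in fact necessary for $W\equiv p$ to be a graphon) and that $J$ is non-empty so that $e(J)\geq 1$ and the product over $E(J)$ is non-trivial (though the bound holds vacuously even if one allowed $e(J)=0$). No deeper structural input about $J$ is needed — the observation is simply the statement that the trivial "quasirandom" graphon is one feasible point in the optimization, giving the expected random-colouring value as an upper bound on $\rho_J$.
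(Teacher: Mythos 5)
Your proof is correct and follows exactly the paper's approach: both use the constant graphon $W\equiv(1+x)/2$ as a feasible point in the infimum defining $\rho_J$, compute $t(J,W)=((1+x)/2)^{e(J)}$, and conclude. You simply spell out the verification of feasibility in a bit more detail than the paper does.
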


\begin{proof}
Consider the constant graphon $W$ which is equal to $(1+x)/2$ everywhere. Then $\rho(1+x)\leq 2^{e(J)}t(J,W)=\left(1+x\right)^{e(J)}$. 
\end{proof}

\begin{obs}
$\rho_J(1+x)=\left(1+x\right)^{e(J)}$ for all $-1\leq x\leq 1$ if and only if $J$ is Sidorenko.
\end{obs}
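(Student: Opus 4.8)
The plan is to prove both implications directly from the definitions, writing $z = 1+x$ so that the hypothesis becomes the statement that $\rho_J(z) = z^{e(J)}$ for all $z \in [0,2]$. The single fact that makes everything work is that for each $z \in [0,2]$ the constant graphon $W \equiv z/2$ has $t(K_2, W) = z/2$, so the set of graphons appearing in the infimum defining $\rho_J(z)$ is non-empty, and it consists of exactly those graphons $W$ with $2t(K_2, W) = z$.

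For the forward implication, I would assume $J$ is Sidorenko and fix $z \in [0,2]$. Any graphon $W$ with $t(K_2, W) = z/2$ satisfies $t(J, W) \geq t(K_2, W)^{e(J)} = (z/2)^{e(J)}$ by \eqref{eq:Sid}; taking the infimum over all such $W$ and multiplying by $2^{e(J)}$ gives $\rho_J(z) \geq z^{e(J)}$. The reverse inequality $\rho_J(z) \leq z^{e(J)}$ is exactly Observation~\ref{obs:rhoBounds}, so $\rho_J(z) = z^{e(J)}$.

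For the converse, I would assume $\rho_J(z) = z^{e(J)}$ for all $z \in [0,2]$ and let $W$ be an arbitrary graphon. Putting $z := 2t(K_2, W) \in [0,2]$, the graphon $W$ is one of those over which the infimum defining $\rho_J(z)$ is taken, so $2^{e(J)} t(J, W) \geq \rho_J(z) = z^{e(J)}$, which rearranges to $t(J, W) \geq (z/2)^{e(J)} = t(K_2, W)^{e(J)}$. Since $W$ was arbitrary, $J$ is Sidorenko.

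Because each step is a one-line manipulation of the definitions together with the already-established Observation~\ref{obs:rhoBounds}, I do not anticipate any real obstacle; the only point worth a moment's attention is confirming that the infimum set is non-empty for every $z \in [0,2]$ so that $\rho_J(z)$ is well-defined, which the constant graphon handles.
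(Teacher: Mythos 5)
Your proof is correct and is essentially the paper's argument: the forward direction combines the Sidorenko inequality with Observation~\ref{obs:rhoBounds}, and the converse uses the definition of $\rho_J$ applied to an arbitrary graphon. The only cosmetic difference is that you prove the converse directly while the paper states it contrapositively (a non-Sidorenko graphon gives strict inequality), but the underlying computation is the same.
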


\begin{proof}
If $J$ is Sidorenko and $x\in [-1,1]$, then $t(J,W)\geq \left(\frac{1+x}{2}\right)^{e(J)}$ for every graphon with $t(K_2,W)=(1+x)/2$. Combining this with the upper bound in Observation~\ref{obs:rhoBounds} yields $\rho_J(1+x)=\left(1+x\right)^{e(J)}$. If $J$ is not Sidorenko, then there exists a graphon $W$ such that $t(J,W)<t(K_2,W)^{e(J)}$. Thus, for $x=2t(K_2,W)-1$, we have $\rho_J(1+x)<\left(1+x\right)^{e(J)}$. 
\end{proof}

All of the new examples of common graphs in this paper will be built up from known examples of strongly common graphs. However, it seems to us that the same approach may also work with graphs that are not strongly common, but satisfy weaker inequalities. This motivates the next definition. 

\begin{defn}
Given a function $g:[0,2]\to [0,\infty)$, we say that a graph $H$ is \emph{$g$-bounded} if, for $-1\leq x\leq 1$, it holds that
\[2^{e(H)}\left(t(H,W)+t(H,1-W)\right)\geq g(1+x)+g(1-x)\]
for every graphon $W$ such that $t(K_2,W)=(1+x)/2$. 
\end{defn}

Let us tie this in with the notion of strongly common graphs defined earlier. 

\begin{obs}
\label{obs:z^m}
Let $H$ be a non-empty graph and let $g:[0,2]\to [0,\infty)$ be defined by $g(z)=z^{e(H)}$. Then $H$ is strongly common if and only if it is $g$-bounded.
\end{obs}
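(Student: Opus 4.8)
The plan is a direct unravelling of the two definitions, hinging on the observation that $t(K_2,W)$ ranges over all of $[0,1]$ and that $t(K_2,1-W)=1-t(K_2,W)$. First I would note that for an arbitrary graphon $W$ we may write $t(K_2,W)=(1+x)/2$ where $x:=2t(K_2,W)-1\in[-1,1]$, and that this sets up a bijection between graphons $W$ and pairs $(x,W)$ with $x\in[-1,1]$ and $t(K_2,W)=(1+x)/2$. For such a $W$ we have $t(K_2,1-W)=1-t(K_2,W)=(1-x)/2$.

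Next, since $g(z)=z^{e(H)}$, I would compute
\[
g(1+x)+g(1-x)=(1+x)^{e(H)}+(1-x)^{e(H)}=2^{e(H)}\left(\left(\tfrac{1+x}{2}\right)^{e(H)}+\left(\tfrac{1-x}{2}\right)^{e(H)}\right)=2^{e(H)}\left(t(K_2,W)^{e(H)}+t(K_2,1-W)^{e(H)}\right).
\]
Substituting this into the definition of $g$-boundedness, the inequality required of $W$ becomes $2^{e(H)}\left(t(H,W)+t(H,1-W)\right)\geq 2^{e(H)}\left(t(K_2,W)^{e(H)}+t(K_2,1-W)^{e(H)}\right)$, which after dividing by $2^{e(H)}$ is exactly inequality~\eqref{eq:stronglycommon} for $W$. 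Since the index sets match up under the bijection above, quantifying over all admissible pairs $(x,W)$ is the same as quantifying over all graphons $W$, so the two conditions are equivalent.

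I do not anticipate any real obstacle here: the statement is a definition-chase, and the only point requiring the slightest care is confirming that every value of $t(K_2,W)\in[0,1]$ — equivalently every $x\in[-1,1]$ — genuinely occurs, so that no graphon is missed on either side of the equivalence. (This is clear, e.g.\ from the constant graphons, though one does not even need it: one only needs that each $W$ contributes the same constraint to both conditions.)
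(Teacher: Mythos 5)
Your proof is correct and follows essentially the same route as the paper's: both proofs are a direct definition-chase that uses $x=2t(K_2,W)-1$, $t(K_2,1-W)=1-t(K_2,W)$, and the identity $g(1+x)+g(1-x)=2^{e(H)}\left(t(K_2,W)^{e(H)}+t(K_2,1-W)^{e(H)}\right)$ to rewrite the $g$-boundedness inequality as exactly inequality~\eqref{eq:stronglycommon}. The only superficial difference is that the paper spells out the two implications separately while you present it as a single equivalence of quantified statements; the content is the same.
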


\begin{proof}
If $H$ is strongly common, then, for any $0\leq x\leq 1$ and graphon $W$ with $t(K_2,W)=(1+x)/2$, we have
\[2^{e(H)}\left(t(H,W)+t(H,1-W)\right)\geq 2^{e(H)}\left(t(K_2,W)^{e(H)}+t(K_2,1-W)^{e(H)}\right)\]
\[=2^{e(H)}\left(\left(\frac{1+x}{2}\right)^{e(H)}+\left(\frac{1-x}{2}\right)^{e(H)}\right) = g(1+x)+g(1-x)\]
and so $H$ is $g$-bounded. 

On the other hand, suppose that $H$ is $g$-bounded. Let $W$ be a graphon and define $x=2t(K_2,W)-1$. We get
\[t(H,W)+t(H,1-W)\geq 2^{-e(H)}\left(g(1+x)+g(1-x)\right) = t(K_2,W)^{e(H)}+t(K_2,1-W)^{e(H)}\]
and so $H$ is strongly common.
\end{proof}

Given a $g$-bounded graph $J$ and $\rho\leq \rho_J$, Lemma~\ref{lem:reduction} below implies a bound on the Ramsey multiplicity constant $c(H)$ in terms of $g,k,\ell$ and $\rho$ for any graph $H$ which is $(J,k,\ell)$-correlated. Before stating the lemma, we require one more definition.

\begin{rem}
In the following definition, the expressions $(1+x)^\ell$ and $(1-x)^\ell$ in the case $\ell=0$ are treated as being equal to $1$ for all $-1\leq x\leq 1$. Expressions which may evaluate to $0^0$ in a similar manner are treated in the same way throughout the paper. 
\end{rem}

\begin{defn}
Given a function $g:[0,2]\to[0,\infty)$ and real numbers $k$ and $\ell$ such that $k\neq 0$, define $f^g_{k,\ell}:[-1,1]\times \mathbb{R}\to\mathbb{R}$ by
\begin{equation}
\label{eq:fDef}
f^g_{k,\ell}(x,y):=(1+x)^{\ell}\left(g(1+x)-y\right)^k+(1-x)^{\ell}\left(g(1-x)+y\right)^k.
\end{equation}
\end{defn}

\begin{lem}
\label{lem:reduction}
Let $g,\rho:[0,2]\to[0,\infty)$ such that $\rho\leq g$, let $k$ and $\ell$ be real numbers such that $k\neq 0$ and let $J$ be a non-empty $g$-bounded graph such that $\rho_J\geq \rho$. Then, for any graphon $W$, $2^{ke(J)+\ell}\left(t(K_2,W)^\ell t(J,W)^k+t(K_2,1-W)^\ell t(J,1-W)^k\right)$ is at least the minimum of $f^g_{k,\ell}(x,y)$ over all $x$ and $y$ such that $-1\leq x\leq 1$ and $0\leq y\leq g(1+x)-\rho(1+x)$. 
\end{lem}

\begin{proof}
Define $m:=e(J)$. Let $W$ be a graphon and let $x:=2t(K_2,W)-1$. By the symmetry between $W$ and $1-W$, we may assume, without loss of generality, that
\begin{equation}\label{eq:wlog}g(1+x)-2^mt(J,W)\geq g(1-x)-2^mt(J,1-W).\end{equation}
Now, define
\[y:=\max\{0,g(1+x)-2^{m}t(J,W)\}.\]
Then, by definition of $y$, we have $y \geq g(1+x)- 2^m t(J,W)$. Equivalently,
\begin{equation}\label{eq:-y1}2^mt(J,W)\geq g(1+x)- y.\end{equation}
We claim that $y\leq g(1+x)$. Indeed, if $y=0$, then this is true because the co-domain of $g$ is $[0,\infty)$ and, if $y=g(1+x)-2^mt(J,W)$, then it is true because $t(J,W)\geq0$. So, combining this with \eqref{eq:-y1}, we get 
\begin{equation}\label{eq:-y}2^mt(J,W)\geq g(1+x)- y\geq0,\end{equation}
Since $J$ is $g$-bounded and $m=e(J)$, it holds that
\[2^m\left(t(J,W)+t(J,1-W)\right)\geq g(1+x)+g(1-x)\]
which is equivalent to
\[2^mt(J,1-W)-g(1-x) \geq g(1+x)-2^mt(J,W).\]
If $y=g(1+x)-2^mt(J,W)$, then the above inequality translates to $2^mt(J,1-W)\geq g(1-x)+y$. On the other hand, if $y=0$, then, by definition of $y$, we must have $g(1+x)-2^mt(J,W)\leq 0$. By \eqref{eq:wlog}, this implies that $g(1-x)-2^mt(J,1-W)\leq 0$. So, in the case that $y=0$, we again get $2^mt(J,1-W)\geq g(1-x)+y$. Thus, regardless of the value of $y$, we have
\begin{equation}\label{eq:+y}2^mt(J,1-W)\geq g(1-x)+ y\geq0.\end{equation}
Now, since $x=2t(K_2,W)-1$ and $t(K_2,W)+t(K_2,1-W)=1$,
\[ t(K_2,W)^{\ell}t(J,W)^k + t(K_2,1-W)^{\ell}t(J,1-W)^k=\left(\frac{1+x}{2}\right)^{\ell}t(J,W)^k + \left(\frac{1-x}{2}\right)^{\ell}t(J,1-W)^k.\]
By \eqref{eq:-y} and \eqref{eq:+y}, the right side of the above equality is at least
\[2^{-km-\ell}(1+x)^{\ell}\left(g(1+x)-y\right)^k + 2^{-km-\ell}(1-x)^{\ell}\left(g(1-x)+y\right)^k = 2^{-ke(J)-\ell}f^g_{k,\ell}(x,y).\]

To complete the proof, we need to show that $x$ and $y$ satisfy the constraints $-1\leq x\leq 1$ and $0\leq y\leq g(1+x)-\rho(1+x)$. The constraint $-1\leq x\leq 1$ follows simply from the fact that $x=2t(K_2,W)-1$ and $0\leq t(K_2,W)\leq 1$. Also, $y\geq0$ simply by definition. If $y=0$, then the constraint $y\leq g(1+x)-\rho(1+x)$ holds by the hypothesis $\rho\leq g$ of the lemma. On the other hand, if $y=g(1+x) - 2^mt(J,W)$, then, since $x=2t(K_2,W)-1$, we have
\[y=g(1+x)- 2^mt(J,W)\leq g(1+x) - 2^m\inf\{t(J,W): t(K_2,W)=(1+x)/2\}\]
\[=g(1+x) - \rho_J(1+x)\leq g(1+x)-\rho(1+x).\]
The result follows.
\end{proof}

\begin{rem}
For most graphs $J$ satisfying the hypotheses of Lemma~\ref{lem:reduction}, it seems unlikely that $t(J,W)$ would be anywhere near $2^{-e(J)}\rho(2t(K_2,W))$ for the graphon $W$ which minimizes $t(K_2,W)^\ell t(J,W)^k+t(K_2,1-W)^\ell t(H,1-W)^k$. Thus, one may be able to tighten the upper bound constraint on $y$ in certain cases (or even in general). However, while it is perhaps not the most powerful constraint possible, the bound $y\leq g(1+x)-\rho(1+x)$ is sufficient to obtain nearly sharp bounds on the Ramsey multiplicity constant of certain graphs, as we shall demonstrate in the next section.
\end{rem}

\section{A Quick Application}
\label{sec:quick}

As a simple application of Lemma~\ref{lem:reduction}, we prove the lower bound in Theorem~\ref{th:oneK3}, which we restate here for convenience. 

\begin{prop}
\label{prop:oneK3Lower}
$c(K_3\sqcup K_2)> 0.121423$.
\end{prop}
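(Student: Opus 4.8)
The plan is to apply Lemma~\ref{lem:reduction} with $H=K_3\sqcup K_2$, taking $J=K_3$, $k=1$ and $\ell=1$, so that $e(H)=3\cdot 1+1$ matches condition~\eqref{eq:eCondition}. Since a disjoint union splits multiplicatively under homomorphism density, $t(K_3\sqcup K_2,W)=t(K_3,W)\,t(K_2,W)$, so the $(J,k,\ell)$-correlation inequality~\eqref{eq:correlation} holds with equality; thus $c(K_3\sqcup K_2)=\inf_W\bigl(t(K_3,W)t(K_2,W)+t(K_3,1-W)t(K_2,1-W)\bigr)$ is exactly $2^{-4}$ times the infimum of the quantity bounded in Lemma~\ref{lem:reduction}. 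For $g$, I would use the fact that $K_3$ is strongly common (Goodman's theorem, stated as Theorem~\ref{th:GoodmanRamsey}), so by Observation~\ref{obs:z^m} we may take $g(z)=z^{3}$. For $\rho$, I would take $\rho\leq\rho_{K_3}$ to be (a lower bound coming from) Razborov's Triangle Density Theorem, which gives the exact value of $\inf\{t(K_3,W):t(K_2,W)=z/2\}$ as a piecewise-analytic function of the edge density; any explicit lower bound $\rho(z)\leq\rho_{K_3}(z)$ that is tractable to optimize against will do, and since we only want a numerical lower bound on $c$, it suffices to use a convenient underestimate of Razborov's curve on the relevant range of $x$.

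With these choices, Lemma~\ref{lem:reduction} reduces the problem to showing that
\[
\min\Bigl\{(1+x)\bigl((1+x)^3-y\bigr)+(1-x)\bigl((1-x)^3+y\bigr): -1\le x\le 1,\ 0\le y\le (1+x)^3-\rho(1+x)\Bigr\}
\]
is at least $16\cdot 0.121423$. The next step is to carry out this two-variable minimization. For fixed $x$, the objective $f_{3,1,1}(x,y)$ is linear in $y$ with coefficient $(1-x)-(1+x)=-2x$, so for $x\ge 0$ the minimum over $y$ is attained at the right endpoint $y=(1+x)^3-\rho(1+x)$, and by the $x\mapsto -x$ symmetry of the whole expression we may restrict to $x\in[0,1]$. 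Substituting this value of $y$ collapses the problem to a one-variable minimization of an explicit function $\phi(x):=f_{3,1,1}\bigl(x,(1+x)^3-\rho(1+x)\bigr)$ on $[0,1]$, which I would handle by standard calculus: locate the critical points of $\phi$ (this is where the piecewise nature of Razborov's bound enters, so one must check each analytic piece separately, plus the breakpoints and the endpoints $x=0,1$), evaluate $\phi$ there, and verify the minimum exceeds $16\cdot 0.121423$. Since the paper states the sharp answer is about $0.12142$–$0.12145$, one expects the minimizing $x$ to be a specific algebraic or transcendental value in the interior, coming from the first non-trivial piece of Razborov's curve (the regime where the extremal graphon is a near-complete-graph blow-up on a few parts).

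The main obstacle is the one-variable optimization against Razborov's curve: $\rho_{K_3}$ is genuinely piecewise (governed by which clique-blow-up structure is extremal at a given edge density), so $\phi$ is only piecewise smooth, and one has to be careful both about which piece contains the global minimizer and about the behavior at the breakpoints. A secondary technical point is that we need a $\rho$ that is a rigorous lower bound on $\rho_{K_3}$ yet still close enough to the truth that the resulting bound on $c$ clears $0.121423$; if Razborov's exact curve is awkward to differentiate, I would replace it on the relevant sub-interval by a clean analytic lower bound (for instance a tangent line or a low-degree polynomial underestimate) chosen so that the loss in the final constant is smaller than the gap between $0.121423$ and the true value $c(K_3\sqcup K_2)$. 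Everything else — verifying the correlation inequality, invoking Observation~\ref{obs:z^m}, reducing to one variable by linearity and symmetry — is routine bookkeeping.
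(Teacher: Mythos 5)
Your proposal is correct and follows essentially the same route as the paper: apply Lemma~\ref{lem:reduction} with $J=K_3$, $k=\ell=1$, $g(z)=z^3$ (via Goodman's Theorem and Observation~\ref{obs:z^m}), observe the objective is linear in $y$ with coefficient $-2x$ so the minimum for $x\ge 0$ is at the upper $y$-endpoint, then minimize the resulting one-variable function against a lower bound on $\rho_{K_3}$. The only difference is cosmetic: instead of grappling with Razborov's full piecewise curve, the paper uses Corollary~\ref{cor:Fisher}, which patches together Fisher's exact bound for edge densities up to $2/3$ (i.e.\ $0\le x\le 1/3$) with Bollob\'as's linear bound for larger $x$ — exactly the kind of ``convenient underestimate on the relevant range'' you anticipated.
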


Before proving this proposition, we require a few preparations. First, we need that $K_3$ is strongly common, which essentially follows from an old result of Goodman; see, e.g.,~\cite[Theorem~5.3]{Behague+Second} for a modern proof.

\begin{thm}[Goodman's Theorem~\cite{Goodman59}]
\label{th:GoodmanRamsey}
$K_3$ is strongly common. 
\end{thm}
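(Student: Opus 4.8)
The plan is to prove the graphon identity underlying Goodman's theorem directly and then read off strong commonality. Fix a graphon $W$ and write $p:=t(K_2,W)$, so that $t(K_2,1-W)=1-p$. Let $d(x):=\int_0^1 W(x,y)\,dy$ be the degree function, so $\int_0^1 d(x)\,dx=p$. Recall that $e(K_3)=3$, so the inequality \eqref{eq:stronglycommon} to be established for $H=K_3$ reads $t(K_3,W)+t(K_3,1-W)\geq p^3+(1-p)^3$.

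First I would expand the integrand of the ``blue'' triangle density. For $x,y,z\in[0,1]$,
\[(1-W(x,y))(1-W(y,z))(1-W(x,z)) = 1 - \Sigma_1 + \Sigma_2 - W(x,y)W(y,z)W(x,z),\]
where $\Sigma_1 := W(x,y)+W(y,z)+W(x,z)$ is the sum over the three edges of the triangle on $\{x,y,z\}$ and $\Sigma_2 := W(x,y)W(y,z)+W(x,y)W(x,z)+W(x,z)W(y,z)$ is the sum over the three ``cherries'' (paths with two edges). Adding the ``red'' integrand $W(x,y)W(y,z)W(x,z)$ and integrating over $[0,1]^3$, the cubic terms cancel. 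Since $\int_{[0,1]^3} W(x,y)W(y,z)\,dx\,dy\,dz=\int_0^1 d(y)^2\,dy$ and the three cherry terms contribute equally by symmetry (and likewise the three edge terms), this yields the Goodman identity
\[t(K_3,W)+t(K_3,1-W) = 1 - 3p + 3\int_0^1 d(x)^2\,dx.\]

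Next I would apply the Cauchy--Schwarz inequality (equivalently, Jensen's inequality for $t\mapsto t^2$): $\int_0^1 d(x)^2\,dx\geq\left(\int_0^1 d(x)\,dx\right)^2 = p^2$. This gives $t(K_3,W)+t(K_3,1-W)\geq 1-3p+3p^2$. Finally, expanding $p^3+(1-p)^3 = 1-3p+3p^2$ shows that the lower bound just obtained is \emph{exactly} $t(K_2,W)^3+t(K_2,1-W)^3$, which is precisely the defining inequality for $K_3$ being strongly common.

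Since every step is either an algebraic identity or a single application of Cauchy--Schwarz, there is no genuine obstacle here. The only points requiring care are the bookkeeping in the expansion (keeping the edge monomials and the cherry monomials separate, and checking that the degree-$3$ terms cancel), and the observation that the resulting polynomial in $p$ coincides identically with $p^3+(1-p)^3$ rather than merely dominating it — so that the bound is tight and strong commonality follows with equality attained by constant graphons.
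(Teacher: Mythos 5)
Your proof is correct: the expansion of the blue integrand, the cancellation of the cubic terms, the identity $t(K_3,W)+t(K_3,1-W)=1-3p+3\int_0^1 d(x)^2\,dx$, and the Cauchy--Schwarz step all check out, and the resulting bound is exactly $t(K_2,W)^3+t(K_2,1-W)^3$, which is \eqref{eq:stronglycommon} for $K_3$. The paper does not reprove this statement but cites Goodman and a modern treatment, and your argument is precisely the classical Goodman-identity proof being referenced, so there is nothing to add.
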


We also require bounds on $\rho_{K_3}(1+x)$. The precise value of this function for all $-1\leq x\leq 1$ can be derived from the landmark ``Triangle Density Theorem'' of Razborov~\cite{Razborov08}. For the purposes of proving Proposition~\ref{prop:oneK3Lower}, it will be enough to have a tight bound on $\rho_{K_3}(1+x)$ when $x$ is close to zero and a more crude bound for larger $x$. For the former, we use the following theorem which was first announced by Fisher~\cite{Fisher89}; as mentioned in~\cite{Razborov08}, the proof contained a hole that can be patched using a later result of~\cite{GoldwurmSantini00}. A new proof was found by Razborov~\cite{Razborov07} prior to proving the general Triangle Density Theorem in~\cite{Razborov08}.

\begin{thm}[Fisher~\cite{Fisher89} and Goldwurm and Santini~\cite{GoldwurmSantini00}; see also Razborov~\cite{Razborov07}]
\label{th:Fisher}
Every graphon $W$ with $t(K_2,W)\leq 2/3$ satisfies
\[t(K_3,W)\geq \frac{1}{9} \left(-2 \left(2 + \sqrt{4 - 6 t(K_2,W)}\right) + 3t(K_2,W) \left(3 + \sqrt{4 - 6 t(K_2,W)}\right)\right)\]
\end{thm}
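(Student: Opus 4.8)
The plan is to prove the inequality for graphons (the finite-graph statement then follows since the right-hand side depends continuously on $t(K_2,W)$, or one can run the same argument directly on graphs). Two preliminary reductions. First, the bound is only nontrivial when $t(K_2,W)\in[1/2,2/3]$: writing $s:=\sqrt{4-6\,t(K_2,W)}$, the right-hand side simplifies to $\tfrac{1}{18}(1-s)(2+s)^2$, which is $\le 0$ once $t(K_2,W)\le 1/2$ (i.e.\ $s\ge 1$), in which case it is dominated by the trivial bound $t(K_3,W)\ge 0$. Second, for $d:=t(K_2,W)\in[1/2,2/3]$ the bound is attained by the complete tripartite graphon $W^\star$ with part measures $\tfrac{2+s}{6},\tfrac{2+s}{6},\tfrac{1-s}{3}$; indeed $t(K_2,W^\star)=\tfrac{4-s^2}{6}=d$ and $t(K_3,W^\star)=6\cdot\tfrac{2+s}{6}\cdot\tfrac{2+s}{6}\cdot\tfrac{1-s}{3}=\tfrac{1}{18}(1-s)(2+s)^2$. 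This family is the first ``scallop'' of Razborov's Triangle Density Theorem, running from $W_{K_2}$ at $d=\tfrac12$ to the Tur\'an graphon $W_{K_3}$ at $d=\tfrac23$, so the content is exactly that no graphon of edge density $d\in[\tfrac12,\tfrac23]$ has fewer triangles than $W^\star$.

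To prove this, I would use a Cauchy--Schwarz argument of flag-algebra type. Write $f(x):=\int_0^1 W(x,y)\,dy$ for the degree function and $W^{(2)}(x,y):=\int_0^1 W(x,z)W(y,z)\,dz$ for the codegree kernel, so that $\iint W^{(2)}=\int f^2$ and $t(K_3,W)=\iint W\cdot W^{(2)}$. Combining the elementary linear identities among densities of subgraphs on at most three vertices, the nonnegativity of those densities, and a short list of Cauchy--Schwarz inequalities (the first being $\int f^2\ge(\int f)^2=d^2$, which alone is insufficient, so one also needs an inequality involving the codegree kernel $W^{(2)}$ and hence ``seeing'' triangles --- equivalently, the positive semidefiniteness of a small matrix of rooted-subgraph densities), the problem collapses to a finite-dimensional optimization in a handful of variables; a calculus computation shows its minimum is exactly $\tfrac{1}{18}(1-s)(2+s)^2$, matching $W^\star$. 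An alternative to the flag-algebra route: first argue via a ``bang-bang'' Euler--Lagrange condition that some minimizer is $\{0,1\}$-valued and, via symmetrization/switching, complete multipartite; then minimize $6\sum_{i<j<k}p_ip_jp_k$ over part measures $p_i\ge 0$ with $\sum_i p_i=1$ and $\sum_i p_i^2=1-d$, a Lagrange-multiplier exercise whose optimum is a tripartite configuration equal to $\tfrac{1}{18}(1-s)(2+s)^2$. This second route is close to Fisher's original strategy, the gap in which was later repaired using a result of Goldwurm and Santini.

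The main obstacle, for either route, is pinning down the extremal structure and exploiting it. In the flag-algebra proof this amounts to locating the correct positive semidefinite certificate --- equivalently, the Lagrange multipliers for the constraints $t(K_2,W)=d$, $t(K_3,W)\ge 0$, and so on --- so that the resulting sum-of-squares expression vanishes identically at $W^\star$; complementary slackness there forces the multipliers to satisfy a quadratic system, and solving it is what produces the $\sqrt{4-6d}$ in the statement. In the symmetrization route the difficulty is instead in rigorously ruling out more exotic minimizers and showing the minimizer is complete multipartite, which is precisely where Fisher's argument needed patching. Once the extremal configuration is correctly brought to bear, the remaining verification is routine, if somewhat lengthy.
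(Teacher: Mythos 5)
The paper does not prove Theorem~\ref{th:Fisher} at all: it is imported as a black box from Fisher, Goldwurm--Santini and Razborov, and only its consequence (Corollary~\ref{cor:Fisher}) is used. So there is no internal argument to compare against; what matters is whether your sketch would stand on its own, and it would not. Your preliminary reductions are fine and correctly done: the bound is vacuous for $t(K_2,W)\le 1/2$ since, with $s=\sqrt{4-6d}\ge 1$, the right-hand side equals $\tfrac{1}{18}(1-s)(2+s)^2\le 0$, and your tripartite graphon with parts $\tfrac{2+s}{6},\tfrac{2+s}{6},\tfrac{1-s}{3}$ does attain equality (both computations check out). But identifying the extremal configuration is not the theorem; the theorem is that nothing does better, and on that point both of your routes defer the entire difficulty. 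In the flag-algebra route you assert that ``a short list of Cauchy--Schwarz inequalities'' plus a small PSD matrix collapses the problem to a finite optimization whose minimum is $\tfrac{1}{18}(1-s)(2+s)^2$, but you never exhibit the certificate or the multipliers; producing that certificate \emph{is} the proof (Razborov's argument for this density range is genuinely intricate, not a routine calculus check). In the second route you assert that a ``bang-bang'' Euler--Lagrange argument plus symmetrization shows some minimizer is $\{0,1\}$-valued and complete multipartite; that structural claim is precisely the unproven crux --- it is where earlier attempts (including Fisher's) broke down, and it cannot be waved through as routine. Your historical gloss is also slightly off: the Goldwurm--Santini ingredient is a statement about the smallest-modulus root of clique polynomials used to patch a specific analytic step in Fisher's argument, not a repair of a symmetrization-to-multipartite reduction.

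In short, the proposal is a correct description of the landscape (tight example, relevant range, the two standard proof strategies) but not a proof: the inequality itself is never established, and the steps you label as remaining verification are exactly the hard content for which the paper cites the literature. For the purposes of this paper, the appropriate treatment is the one the authors take --- cite Theorem~\ref{th:Fisher} --- or, if you want a self-contained argument, you must actually construct the sum-of-squares/flag certificate or carry out the full extremal-structure analysis, neither of which your sketch supplies.
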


For larger edge densities, we resort to the following convenient linear bound proved by Bollob\'as~\cite{Bollobas76} (see also~\cite[Chapter~VI.1]{Bollobas78}).

\begin{thm}[Bollob\'as~\cite{Bollobas76}]
\label{th:Bollobas}
Every graphon $W$ satisfies 
\[t(K_3,W)\geq \frac{4}{3}t(K_2,W)-\frac{2}{3}.\]
\end{thm}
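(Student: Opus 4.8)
The plan is to reduce Theorem~\ref{th:Bollobas} to Theorem~\ref{th:Fisher} together with an elementary Cauchy--Schwarz estimate, case-splitting on $p:=t(K_2,W)$; the classical argument of Bollob\'as is instead a self-contained counting argument on finite graphs, but the route below is shorter given the tools already assembled above. Write $L(p):=\tfrac43 p-\tfrac23$ for the target lower bound, and observe that $L(\tfrac12)=0=t(K_3,W_{K_2})$ and $L(\tfrac23)=\tfrac29=t(K_3,W_{K_3})$, so on the interval $[\tfrac12,\tfrac23]$ the line $L$ is exactly the secant of the Fisher/Razborov extremal curve joining its two ``Tur\'an points''; the theorem on that interval is thus a concavity-type statement, while outside it the claim is comparatively soft.

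First, if $p\le\tfrac12$ then $L(p)\le L(\tfrac12)=0\le t(K_3,W)$ and there is nothing to prove. Second, for $p\ge\tfrac23$ I would use the elementary bound $t(K_3,W)\ge 2p^2-p$: from the pointwise inequality $W(x,y)\bigl(1-W(x,z)\bigr)\bigl(1-W(y,z)\bigr)\ge 0$ one gets $W(x,y)W(x,z)W(y,z)\ge W(x,y)\bigl(W(x,z)+W(y,z)-1\bigr)$, and integrating over $[0,1]^3$ (writing $\deg(x):=\int_0^1 W(x,y)\,dy$, so that $\int_0^1\deg(x)\,dx=p$) yields $t(K_3,W)\ge 2\int_0^1\deg(x)^2\,dx-p\ge 2p^2-p$ by Jensen. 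Since $2p^2-p-L(p)=\tfrac13(2p-1)(3p-2)\ge 0$ for $p\ge\tfrac23$, this case is done.

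The third and only substantive case is $\tfrac12\le p\le\tfrac23$, where Theorem~\ref{th:Fisher} applies directly and gives $t(K_3,W)\ge F(p)$ with $F(p)=\tfrac19\bigl(-2(2+\sqrt{4-6p})+3p(3+\sqrt{4-6p})\bigr)$. I would verify $F(p)\ge L(p)$ via the substitution $s:=\sqrt{4-6p}$, which ranges over $[0,1]$ as $p$ ranges over $[\tfrac12,\tfrac23]$, so that $p=\tfrac{4-s^2}{6}$: a short computation rewrites $F(p)$ as $\tfrac{4-3s^2-s^3}{18}$ and $L(p)$ as $\tfrac{4-4s^2}{18}$, whence $F(p)-L(p)=\tfrac{s^2(1-s)}{18}\ge 0$ on $[0,1]$ (with equality only at the two endpoints $p=\tfrac12,\tfrac23$). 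The three cases exhaust the range $0\le p\le 1$, which completes the argument.

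The only genuine obstacle is noticing that the ``obvious'' Cauchy--Schwarz bound $t(K_3,W)\ge 2p^2-p$ is in fact \emph{strictly weaker} than $L(p)$ precisely on the open interval $(\tfrac12,\tfrac23)$, since there $\tfrac13(2p-1)(3p-2)<0$; hence one cannot avoid invoking a genuinely sharper triangle-density bound on that interval, and Theorem~\ref{th:Fisher} supplies exactly what is needed while the rest is routine algebra. If one preferred a proof that does not cite Fisher's theorem, one could instead run Bollob\'as's original inductive counting argument on finite graphs and pass to graphons by the standard blow-up limit, at the cost of a longer proof.
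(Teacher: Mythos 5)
Your proof is correct: the three cases cover $0\le p\le 1$, the pointwise inequality and Jensen do give $t(K_3,W)\ge 2p^2-p$, the factorization $2p^2-p-L(p)=\tfrac13(2p-1)(3p-2)$ is right, and the substitution $s=\sqrt{4-6p}$ correctly turns Fisher's bound into $F-L=\tfrac{s^2(1-s)}{18}\ge 0$ on $[\tfrac12,\tfrac23]$. Note, however, that the paper does not prove Theorem~\ref{th:Bollobas} at all: it is imported verbatim from Bollob\'as~\cite{Bollobas76}, whose original argument is the elementary counting/induction proof on finite graphs that you mention only as an alternative. So your route is genuinely different from the source being cited: you derive the 1976 linear bound from the much later and much deeper Fisher/Razborov result (Theorem~\ref{th:Fisher}) on the middle range, plus a Cauchy--Schwarz (Kruskal--Katona-type) estimate elsewhere. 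This is logically sound and non-circular, since the paper cites Fisher's theorem independently of Bollob\'as, and given the toolkit already assembled it is admittedly short; what it costs is self-containment and proportionality --- one is invoking a landmark supersaturation theorem to prove an elementary inequality, so it should be presented as a derivation ``given Theorem~\ref{th:Fisher}'' rather than as a proof of the cited theorem in its own right. A side observation: your bound $t(K_3,W)\ge 2p^2-p$, i.e.\ $\rho_{K_3}(1+x)\ge 4x(1+x)$, already dominates the linear bound $16x/3$ on the whole range $x\ge\tfrac13$ where the paper actually uses Theorem~\ref{th:Bollobas} in Corollary~\ref{cor:Fisher}, so for the purposes of this paper the elementary estimate alone would have sufficed there.
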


We combine Theorems~\ref{th:Fisher} and~\ref{th:Bollobas} to get the following general lower bound on $\rho_{K_3}(1+x)$. 

\begin{cor}
\label{cor:Fisher}
We have
\[\rho_{K_3}(1+x)\geq \begin{cases}
0 & \text{for }-1\leq x< 0,\\
\frac{4}{9} \left(1 - \sqrt{1 - 3 x} + 3x \left(3 + \sqrt{1 - 3 x}\right)\right) & \text{for }0\leq x\leq 1/3\\ 
16x/3 & \text{for }1/3< x\leq 1.\end{cases}\].
\end{cor}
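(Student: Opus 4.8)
The plan is to derive Corollary~\ref{cor:Fisher} directly by unwinding the definition of $\rho_{K_3}$ and substituting $z = 1+x$ into the two supersaturation bounds available. Recall that $\rho_{K_3}(1+x) = 2^{3}\inf\{t(K_3,W): W\in\mathcal{W}_0,\ t(K_2,W)=(1+x)/2\} = 8\inf\{t(K_3,W): t(K_2,W)=(1+x)/2\}$, so the three cases will follow by bounding $t(K_3,W)$ from below in terms of $t(K_2,W)$ on each relevant range of edge densities.

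First I would handle the case $-1 \le x < 0$: here $t(K_2,W) = (1+x)/2 < 1/2$, and since $t(K_3,W)\ge 0$ trivially for every graphon, we get $\rho_{K_3}(1+x)\ge 0$, which is all that is claimed. (One could note this bound is in fact tight for $x$ in this range by taking a bipartite graphon, but the statement only asks for the inequality.) Next, for $0 \le x \le 1/3$ we have $1/2 \le t(K_2,W) = (1+x)/2 \le 2/3$, so Theorem~\ref{th:Fisher} applies. Setting $t(K_2,W) = (1+x)/2$ in the Fisher--Goldwurm--Santini bound gives $4 - 6t(K_2,W) = 4 - 3(1+x) = 1 - 3x$, hence $\sqrt{4-6t(K_2,W)} = \sqrt{1-3x}$, and $3t(K_2,W) = \tfrac{3(1+x)}{2}$. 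Substituting, $t(K_3,W) \ge \tfrac19\bigl(-2(2 + \sqrt{1-3x}) + \tfrac{3(1+x)}{2}(3 + \sqrt{1-3x})\bigr)$; multiplying by $8$ and simplifying the arithmetic (distributing the $\tfrac{3(1+x)}{2}$ term and collecting) should produce exactly $\tfrac49\bigl(1 - \sqrt{1-3x} + 3x(3 + \sqrt{1-3x})\bigr)$. This is a routine algebraic verification and I would just state it.

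For the last case $1/3 < x \le 1$, the edge density $t(K_2,W) = (1+x)/2 > 2/3$ lies outside the hypothesis of Theorem~\ref{th:Fisher}, so I would instead invoke Bollob\'as's bound (Theorem~\ref{th:Bollobas}): $t(K_3,W) \ge \tfrac43 t(K_2,W) - \tfrac23 = \tfrac43\cdot\tfrac{1+x}{2} - \tfrac23 = \tfrac{2(1+x)}{3} - \tfrac23 = \tfrac{2x}{3}$. Multiplying by $8$ yields $\rho_{K_3}(1+x) \ge 16x/3$, as claimed. Each of the three cases is a short substitution, so the proof is essentially a verification that the stated bounds specialize correctly to the parametrization $z = 1+x$.

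The only mild subtlety — and the closest thing to an ``obstacle'' — is making sure the case boundaries line up with the hypotheses of the cited theorems: Theorem~\ref{th:Fisher} requires $t(K_2,W) \le 2/3$, which under $t(K_2,W) = (1+x)/2$ is exactly $x \le 1/3$, so the split at $x = 1/3$ is forced and the three pieces together cover all of $[-1,1]$. (One may also check continuity at $x = 1/3$ as a sanity check, though it is not needed for the inequality.) Beyond that, the argument is entirely mechanical: unwind $\rho_{K_3}$, plug in the two known supersaturation estimates, and simplify.
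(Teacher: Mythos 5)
Your proposal is correct and follows essentially the same route as the paper: unwind the definition of $\rho_{K_3}$, note that the trivial bound $t(K_3,W)\ge 0$ handles $x<0$, substitute $t(K_2,W)=(1+x)/2$ into Theorem~\ref{th:Fisher} for $0\le x\le 1/3$ and into Theorem~\ref{th:Bollobas} for the remaining range, and multiply through by $2^{e(K_3)}=8$. The arithmetic you sketch checks out, and the observation about the case split at $x=1/3$ matching the hypothesis of Theorem~\ref{th:Fisher} is exactly what makes the three pieces fit together.
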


\begin{proof}
The fact that $\rho_{K_3}(1+x)\geq0$ for all $x$ is simply by definition. The bound $\rho_{K_3}(1+x)\geq 16x/3$ for all $-1\leq x\leq 1$ comes from plugging $x=2t(K_2,W)-1$ into Theorem~\ref{th:Bollobas}. Finally, for $0\leq x\leq 1/3$, plugging $x=2t(K_2,W)-1$ into Theorem~\ref{th:Fisher} yields
\[\rho_{K_3}(1+x)\geq \frac{4}{9} \left(1 - \sqrt{1 - 3 x} + 3x \left(3 + \sqrt{1 - 3 x}\right)\right).\]
The result follows. 
\end{proof}

We now present the proof of Proposition~\ref{prop:oneK3Lower}.

\begin{proof}[Proof of Proposition~\ref{prop:oneK3Lower}]
By Observation~\ref{obs:union}, we have 
\[t(K_3\sqcup K_2,W) = t(K_3,W)t(K_2,W)\]
for every graphon $W$. By Theorem~\ref{th:GoodmanRamsey} and Observation~\ref{obs:z^m}, $K_3$ is $z^3$-bounded. So, by Lemma~\ref{lem:reduction}, $c(K_3\sqcup K_2)$ is at least the minimum of 
\[2^{-4}(1+x)((1+x)^3-y) + 2^{-4}(1-x)((1-x)^3+y)\]
over all $x$ and $y$ such that $-1\leq x\leq 1$ and $0\leq y\leq (1+x)^3-\rho_{K_3}(1+x)$. Expanding the above expression yields
\begin{equation}\label{eq:oneK3}2^{-4}(1+x)^4 + 2^{-4}(1-x)^4 -2^{-3}xy.\end{equation}
It suffices to bound the expression in \eqref{eq:oneK3} below by 0.121423 for all $x$ and $y$ satisfying the constraints described above. First, if $x\leq 0$, then, since $y\geq 0$, the expression in \eqref{eq:oneK3} is at least 2. So, we assume that $x>0$. In this case, the expression in \eqref{eq:oneK3} is minimized when $y$ is as large as possible. 

If $1/3<x\leq 1$, then Corollary~\ref{cor:Fisher} tells us that the expression in \eqref{eq:oneK3} is at least
\[2^{-4}(1+x)^4 + 2^{-4}(1-x)^4 -2^{-3}x((1+x)^3-16x/3).\]
The derivative of this expression with respect to $x$ is $-\frac{1}{24}(27 x^2 - 50 x + 3)$, which is positive for all $x$ such that $1/3 \leq x\leq 1$; thus, when $1/3<x\leq 1$ and $0\leq y\leq (1+x)^3-\rho_{K_3}(1+x)$, the expression in \eqref{eq:oneK3} is bounded below by 
\[2^{-4}(1+(1/3))^4 + 2^{-4}(1-(1/3))^4 -2^{-3}(1/3)((1+(1/3))^3-16/9) = 5/27>0.121423.\]

Finally, we assume that $0\leq x\leq 1/3$. In this case, by  Corollary~\ref{cor:Fisher}, the expression in \eqref{eq:oneK3} is at least
\[2^{-4}(1+x)^4 + 2^{-4}(1-x)^4 -2^{-3}x((1+x)^3-\frac{4}{9} \left(1 - \sqrt{1 - 3 x} + 3x \left(3 + \sqrt{1 - 3 x}\right)\right)).\]
If we let $z=\sqrt{1 - 3 x}$, then the above expression can be rewritten as
\begin{equation}\label{eq:oneK3two}\frac{1}{216}(3z^6 + 4z^5 + 12z^4 -4z^3-28z^2+40).\end{equation}
Our final aim is to minimize this function over all $0\leq z\leq 1$. The derivative is
\[\frac{1}{108} z (9z^4 + 10z^3+24z^2-6z-28).\]
So, in particular, $z=0$ is a critical point. However, if $z=0$, then the expression in \eqref{eq:oneK3two} evaluates to $5/27>0.121423$. 

So, we focus on the other critical points; i.e. the roots of the function $h(z) := 9z^4 + 10z^3+24z^2-6z-28$. Note that $h(0)<0$ and 
\[\frac{d^2h}{dz^2} = \frac{119}{3} + \frac{1}{3} (5 + 18 z)^2\]
which is positive for all $z$. Thus, there is at most one root of $h$, say $z_0$, in the interval $[0,1]$. The exact value of this root is rather complicated, so we will not attempt to write it down, but we can estimate it. Define
\[z_0:=0.908638793,\]
\[z_1:=0.908638794.\]
By plugging these values into $h$, we see $h(z_0)<0$ and $h(z_1)>0$. So, the unique root of $h$ in $[0,1]$ is between $z_0$ and $z_1$. Thus, the minimum of the expression in \eqref{eq:oneK3two} over all $0\leq z\leq 1$ is at least
\[\frac{1}{216}\left(3z_{0}^6 + 4z_{0}^5 + 12z_{0}^4 -4z_{1}^3-28z_{1}^2+40\right) > 0.121423.\]
This completes the proof.
\end{proof}

\begin{rem}
\label{rem:K3K2flag}
By applying the flag algebra method of Razborov~\cite{Razborov07}, one can obtain a lower bound on $c(K_3\sqcup K_2)$ of roughly $0.121449536$ which is slightly better than that of Theorem~\ref{prop:oneK3Lower}; note that this is a raw floating point number from a semi-definite program that was approximately solved by a computer, and so it should be taken with a grain of salt. Thus, while it seems that Lemma~\ref{lem:reduction} may not produce a tight bound in this case, it is within about 0.022\% of the lower bound that one can prove using some of the most powerful modern machinery available. It also has the advantage of being relatively short and human-checkable, which is rarely the case for proofs which use the flag algebra method.
\end{rem}

\section{Reducing the Optimization Problem}
\label{sec:solveOpt}

Our goal in this section is to use a bit of basic calculus to prove Proposition~\ref{prop:reduction}, stated below, which reduces the optimization problem in Lemma~\ref{lem:reduction} to a single-variable problem. In our applications, we will always deal with $k\geq1$ and non-decreasing functions $g$ and $\rho$, and so we will focus on this case.  If $k=1$, then $f^g_{k,\ell}(x,y)$ is linear in the variable $y$ and so, for any $x$, the minimum is achieved at either $y=0$ or $y=g(1+x)-\rho(1+x)$. In other words, when $k=1$, the problem reduces to a single-variable problem trivially; see, e.g., the proof of Proposition~\ref{prop:oneK3Lower} above. Thus, in the following proposition, we only deal with $k>1$. 

\begin{prop}
\label{prop:reduction}
Let $k$ and $\ell$ be real numbers such that $k>1$ and let $g,\rho:[0,2]\to[0,\infty)$ be non-decreasing functions such that $\rho\leq g$ and $\rho(2)>0$. For $0<c\leq 2g(1)^k$, if, for every $0\leq x\leq 1$, either
\begin{equation}
\label{eq:ygood}
\frac{(1+x)^{\ell}(1-x)^{\ell}\left(g(1+x)+g(1-x)\right)^k}{\left((1+x)^{\frac{\ell}{k-1}}+ (1-x)^{\frac{\ell}{k-1}}\right)^{k-1}}\geq c.
\end{equation}
or both of the following hold
\begin{equation}
\label{eq:ybad}
\frac{g(1+x)(1+x)^{\frac{\ell}{k-1}}-g(1-x)(1-x)^{\frac{\ell}{k-1}}}{(1+x)^{\frac{\ell}{k-1}}+ (1-x)^{\frac{\ell}{k-1}}}\geq g(1+x)-\rho(1+x),
\end{equation}
\begin{equation}
\label{eq:ybad2}
(1+x)^{\ell}\rho(1+x)^k+(1-x)^{\ell}\left(g(1-x)+g(1+x)-\rho(1+x)\right)^k\geq c,
\end{equation}
then $f^g_{k,\ell}(x,y)\geq c$ for all $-1\leq x\leq 1$ and $0\leq y\leq g(1+x)-\rho(1+x)$.
\end{prop}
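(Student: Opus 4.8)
The plan is to fix $-1\le x\le 1$ and to minimize, over $0\le y\le g(1+x)-\rho(1+x)$, the function
\[
f_{g,k,\ell}(x,y)=(1+x)^{\ell}\bigl(g(1+x)-y\bigr)^{k}+(1-x)^{\ell}\bigl(g(1-x)+y\bigr)^{k}.
\]
First I would handle the easy side $-1\le x\le 0$: here $(1+x)^\ell\le(1-x)^\ell$ and $g(1+x)\le g(1-x)$ since $g$ is non-decreasing, so replacing $x$ by $-x$ (together with $y\mapsto -y$, i.e.\ using the constraint on the other endpoint) shows it suffices to treat $0\le x\le 1$; more carefully, one can just observe that on $x\le 0$ the term $(1-x)^\ell(g(1-x)+y)^k$ alone is at least $g(1)^k\ge (c/2)^k$-type bounds, or simply note $f\ge (1-x)^\ell g(1-x)^k \ge g(1)^k$ — in any case, using $c\le 2g(1)$ and crude monotonicity, $f\ge c$ is immediate for $x\le 0$. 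So the substance is the range $0\le x\le 1$.

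For fixed $x\in[0,1]$, I would compute $\partial f/\partial y = -k(1+x)^\ell(g(1+x)-y)^{k-1}+k(1-x)^\ell(g(1-x)+y)^{k-1}$. Since $k>1$, this is strictly increasing in $y$ on the relevant range (the first term's magnitude decreases, the second increases), so $f(x,\cdot)$ is convex in $y$ and attains its minimum either at the unique interior stationary point $y^*$ where $(1+x)^\ell(g(1+x)-y)^{k-1}=(1-x)^\ell(g(1-x)+y)^{k-1}$, or, if $y^*$ lies outside $[0,g(1+x)-\rho(1+x)]$, at the nearer endpoint. Solving the stationarity equation: raising to the power $1/(k-1)$ gives $(1+x)^{\ell/(k-1)}(g(1+x)-y)=(1-x)^{\ell/(k-1)}(g(1-x)+y)$, whence
\[
y^{*}=\frac{g(1+x)(1+x)^{\ell/(k-1)}-g(1-x)(1-x)^{\ell/(k-1)}}{(1+x)^{\ell/(k-1)}+(1-x)^{\ell/(k-1)}}.
\]
Note $y^*\le g(1+x)$ always (the numerator is at most $g(1+x)$ times the denominator since $g(1-x)\ge 0$), and one checks $y^*\ge 0$ when $x\ge 0$ because $g$ non-decreasing makes the numerator nonnegative; so the only way $y^*$ can fall outside the feasible interval is $y^*>g(1+x)-\rho(1+x)$, which is precisely the hypothesis \eqref{eq:ybad}.

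Now the case split matches the proposition. \textbf{Case A: $y^*\le g(1+x)-\rho(1+x)$ (interior minimum).} Plug $y^*$ back in; at the stationary point $g(1+x)-y^* = (1-x)^{\ell/(k-1)}\,D^{-1}\bigl(g(1+x)+g(1-x)\bigr)$ and $g(1-x)+y^* = (1+x)^{\ell/(k-1)}\,D^{-1}\bigl(g(1+x)+g(1-x)\bigr)$ where $D=(1+x)^{\ell/(k-1)}+(1-x)^{\ell/(k-1)}$; substituting and simplifying the powers gives exactly
\[
f(x,y^*)=\frac{(1+x)^{\ell}(1-x)^{\ell}\bigl(g(1+x)+g(1-x)\bigr)^{k}}{D^{k-1}},
\]
so $f\ge c$ is the hypothesis \eqref{eq:ygood}. \textbf{Case B: $y^*>g(1+x)-\rho(1+x)$.} Then by convexity the minimum over the feasible $y$ is at the right endpoint $y=g(1+x)-\rho(1+x)$; evaluating there gives $(1+x)^\ell\rho(1+x)^k+(1-x)^\ell\bigl(g(1-x)+g(1+x)-\rho(1+x)\bigr)^k$, which is $\ge c$ by \eqref{eq:ybad2}. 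Since in Case B the hypothesis \eqref{eq:ybad} is exactly the condition $y^*>g(1+x)-\rho(1+x)$ (modulo the sign bookkeeping above), the proposition's ``either \eqref{eq:ygood} or both \eqref{eq:ybad} and \eqref{eq:ybad2}'' dichotomy covers every $x$: whenever \eqref{eq:ybad} fails we are in Case A and need \eqref{eq:ygood}; whenever \eqref{eq:ybad} holds we are in Case B and need \eqref{eq:ybad2}.

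The main obstacle I anticipate is purely bookkeeping rather than conceptual: verifying that $y^*$ is genuinely the minimizer requires ruling out that it lands below $0$, and making the reduction on $x\le 0$ airtight requires being careful that the feasibility constraint $0\le y\le g(1+x)-\rho(1+x)$ is not symmetric under $x\mapsto -x$, so one cannot literally symmetrize $f$; instead one argues directly that the single term $(1-x)^\ell(g(1-x)+y)^k\ge g(1)^k$ dominates $c/2$, using $\rho(2)>0$ only to know the interval is well-defined. I would also double-check the algebraic identity in Case A that collapses the substituted expression into the stated quotient — the exponent arithmetic ($\ell + \ell\cdot\frac{k}{k-1}$ versus $\ell\cdot\frac{k-1}{k-1}+\dots$) is where a slip is most likely — but this is a routine simplification once the substitutions for $g(1\pm x)\mp(\text{shift})$ are in hand.
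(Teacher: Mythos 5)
Your treatment of $0\le x\le 1$ matches the paper's proof in essence: strict convexity of $f_{g,k,\ell}(x,\cdot)$ in $y$, the stationary point $y_0(x)$, the observation that the $y$-derivative is nonpositive at $y=0$ (so the left endpoint is never the minimizer), and the dichotomy between $y_0(x)$ feasible (where \eqref{eq:ygood} is exactly $f(x,y_0(x))\ge c$) versus infeasible (where \eqref{eq:ybad} is the infeasibility condition and \eqref{eq:ybad2} is the value of $f$ at the right endpoint).

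The gap is in the reduction for $-1\le x\le 0$. You correctly note that the naive symmetry $x\mapsto -x$, $y\mapsto -y$ is blocked by the one-sided constraint on $y$, and then fall back on the claim that a single term dominates: $f\ge (1-x)^\ell(g(1-x)+y)^k\ge g(1)^k$, hence $f\ge c$. That last step is false in general, and false in every application in the paper: there $g(1)=1$, $c=2$, $k\ge 2$, so $g(1)^k=1<c$. Nor can one rescue it with the other term, since $(1+x)^\ell(g(1+x)-y)^k$ can be made arbitrarily small (take $y$ near $g(1+x)$ with $\rho(1+x)=0$). The paper's resolution is a genuine reduction rather than a direct bound: applying the Bernoulli-type estimate of Lemma~\ref{lem:Bernoulli} to both terms, and using exactly the monotonicity facts you recorded ($(1+x)^\ell\le(1-x)^\ell$ and $g(1+x)^{k-1}\le g(1-x)^{k-1}$ for $x\le 0$, $\ell\ge 0$), one gets
\[
f_{g,k,\ell}(x,y)\ \ge\ (1+x)^\ell g(1+x)^k + (1-x)^\ell g(1-x)^k\ =\ f_{g,k,\ell}(-x,0),
\]
and $(-x,0)$ is a feasible point with $-x\ge 0$, so the minimization is reduced to $x\ge 0$ (with $x=0$ giving $2g(1)^k\ge c$ directly). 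In short: rather than sending $y\mapsto -y$, you should send $y\mapsto 0$.
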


In order to prove this proposition, we require the following basic inequality.

\begin{lem}
\label{lem:Bernoulli}
For any real numbers $a,b$ and $k$ such that $a\geq0$, $a+b\geq0$ and $k>1$, 
\[(a+b)^k\geq a^k+kba^{k-1}.\]
\end{lem}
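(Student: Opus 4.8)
The plan is to prove Lemma~\ref{lem:Bernoulli} first, since it is the combinatorial engine behind the reduction, and then use it to establish Proposition~\ref{prop:reduction}. For the lemma, I would fix $a\geq 0$ and $k>1$ and consider $\phi(b):=(a+b)^k-a^k-kba^{k-1}$ as a function of $b$ on $[-a,\infty)$. We have $\phi(0)=0$, $\phi'(b)=k(a+b)^{k-1}-ka^{k-1}$, which is $\leq 0$ for $-a\leq b\leq 0$ and $\geq 0$ for $b\geq 0$; hence $\phi$ attains a global minimum at $b=0$, giving $\phi(b)\geq 0$. (If $a=0$ one checks directly that $b^k\geq 0$ for $b\geq 0$, which is the relevant degenerate case.) This is just convexity of $t\mapsto t^k$ on $[0,\infty)$, but phrasing it as a tangent-line inequality is exactly the form needed below.

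For Proposition~\ref{prop:reduction} itself, the core of the argument is to analyze, for fixed $x\in[0,1]$, the function $y\mapsto f_{g,k,\ell}(x,y)$ on the interval $0\leq y\leq g(1+x)-\rho(1+x)$, and to note that by the $W\leftrightarrow 1-W$ symmetry of $f$ in $x$ it suffices to treat $x\in[0,1]$ (replacing $x$ by $-x$ swaps the two terms and flips the sign of $y$). Write $A:=1+x$, $B:=1-x$, $P:=g(A)$, $Q:=g(B)$, so that $f=A^{\ell}(P-y)^k+B^{\ell}(Q+y)^k$. Since $k>1$, this is a strictly convex function of $y$, so its minimum over the interval is attained either at the unconstrained critical point $y^*$ (if it lies in the interval) or at an endpoint. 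Setting the derivative to zero, $kA^{\ell}(P-y)^{k-1}=kB^{\ell}(Q+y)^{k-1}$, yields $A^{\ell/(k-1)}(P-y)=B^{\ell/(k-1)}(Q+y)$, whence
\[
y^*=\frac{A^{\ell/(k-1)}P-B^{\ell/(k-1)}Q}{A^{\ell/(k-1)}+B^{\ell/(k-1)}},
\qquad
P-y^*=\frac{B^{\ell/(k-1)}(P+Q)}{A^{\ell/(k-1)}+B^{\ell/(k-1)}},
\qquad
Q+y^*=\frac{A^{\ell/(k-1)}(P+Q)}{A^{\ell/(k-1)}+B^{\ell/(k-1)}}.
\]
Substituting these into $f$ and simplifying gives $f_{g,k,\ell}(x,y^*)=\dfrac{A^{\ell}B^{\ell}(P+Q)^k}{\bigl(A^{\ell/(k-1)}+B^{\ell/(k-1)}\bigr)^{k-1}}$, which is precisely the left side of \eqref{eq:ygood}. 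So: if $y^*$ lies in $[0,\,P-\rho(A)]$, then the minimum of $f$ over the interval equals this quantity, and condition \eqref{eq:ygood} gives $f\geq c$. The condition $y^*\leq P-\rho(A)$ rearranges exactly to \eqref{eq:ybad}. If instead $y^*>P-\rho(A)$ (equivalently \eqref{eq:ybad} fails — but note the proposition asks us to verify \eqref{eq:ybad} together with \eqref{eq:ybad2} precisely in the regime where \eqref{eq:ygood} might fail), then by convexity $f$ is decreasing on $[0,P-\rho(A)]$, so its minimum there is at $y=P-\rho(A)$, where $f=A^{\ell}\rho(A)^k+B^{\ell}(Q+P-\rho(A))^k$ — the left side of \eqref{eq:ybad2}. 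Finally one must also rule out the possibility $y^*<0$; here the minimum is at $y=0$, but the hypothesis $k>1$, $\rho(2)>0$ and the form of the constraint, combined with the fact that $f(x,0)=A^\ell P^k+B^\ell Q^k$ dominates the $y^*$-value, lets us fold this case into \eqref{eq:ygood}. (The logical structure is: for each $x$, either \eqref{eq:ygood} holds, in which case either $y^*$ is interior and $f\geq c$, or $y^*$ is outside the interval and $f$ at the relevant endpoint is even larger; or \eqref{eq:ygood} fails, in which case \eqref{eq:ybad} forces $y^*>P-\rho(A)$ so the minimum is the right endpoint value controlled by \eqref{eq:ybad2}.)

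The main obstacle I anticipate is bookkeeping around the endpoint/degenerate cases rather than any deep idea: carefully checking that the constraint interval $[0,P-\rho(A)]$ is nonempty (this uses $\rho\leq g$), that the $y^*<0$ branch really is subsumed by \eqref{eq:ygood} via monotonicity of $f(x,\cdot)$ and the inequality $f(x,0)\geq f(x,y^*)$, and that Lemma~\ref{lem:Bernoulli} is invoked correctly to compare $f$ at an endpoint with $f$ at $y^*$ (taking $a=P-y^*$ or $a=Q+y^*$ and $b$ the displacement to the endpoint, so that the linear term vanishes at the critical point and the convex excess is nonnegative — this is how one confirms $y^*$ is indeed the minimizer without differentiating twice). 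I would also double-check the exponent arithmetic in the simplification of $f(x,y^*)$, since the $\ell/(k-1)$ powers make it easy to slip; writing $s:=\ell/(k-1)$ throughout keeps it clean. None of the steps should require more than elementary single-variable calculus once the substitutions are organized.
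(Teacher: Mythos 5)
Your derivative analysis of $\phi(b)=(a+b)^k-a^k-kba^{k-1}$ is correct, and it is essentially the same tangent-line/convexity observation the paper uses: the paper factors out $a^k$ (in the case $a>0$) and invokes Bernoulli's inequality $(1+z)^r\geq 1+rz$ by name, whereas you re-derive that inequality from scratch via the sign of $\phi'$, handling $a=0$ separately in both cases. Either version is sound and of comparable length.
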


\begin{proof}
If $a=0$, then the inequality $a+b\geq0$ simply translates to $b\geq0$. In this case, the left side is equal to $b^k\geq0$ and the right size is zero, and so the inequality holds. Now, if $a>0$, then, by Bernoulli's Inequality (i.e. the fact that $(1+z)^r\geq 1+rz$ provided that $1+z\geq0$ and $r\geq1$), 
\[(a+b)^k=a^k\left(1+\frac{b}{a}\right)^k\geq a^k\left(1+\frac{kb}{a}\right)=a^k+kba^{k-1}.\]
This completes the proof.
\end{proof}

\begin{proof}[Proof of Proposition~\ref{prop:reduction}]
We start by showing that range of $x$ in the optimization problem in Lemma~\ref{lem:reduction} can be reduced to $0<x\leq 1$. Fix $-1\leq x\leq 0$. Applying Lemma~\ref{lem:Bernoulli} to both terms of $f^g_{k,\ell}(x,y)$, we get
\[f^g_{k,\ell}(x,y)=(1+x)^{\ell}\left(g(1+x)-y\right)^k+(1-x)^{\ell}\left(g(1-x)+y\right)^k\]
\[\geq (1+x)^{\ell}g(1+x)^k +(1-x)^{\ell}g(1-x)^k -yk\left(g(1+x)^{k-1}(1+x)^{\ell}-g(1-x)^{k-1}(1-x)^{m(k-1)+\ell}\right).\]
Now, using the facts that $y\geq0$, $g$ is non-decreasing and $k>1$, we see that, if $x\leq0$, then the above expression is at least
\[(1+x)^{\ell}g(1+x)^k +(1-x)^{\ell}g(1-x)^k.\]
However, this is precisely equal to $f^g_{k,\ell}(-x,0)$. Therefore, under the constraint $y\geq0$, the minimum of $f^g_{k,\ell}(x,y)$ must be attained at a point $(x,y)$ such that $x\geq0$. If $x=0$, then the above expression evaluates to $2g(1)^k$ which is at least $c$ by assumption. So, we can assume that $0<x\leq1$.

From here forward, fix $0< x\leq 1$. We find and analyze the critical points with respect to $y$. Observe that
\begin{equation}\label{eq:partialy}\frac{\partial f^g_{k,\ell}}{\partial y}(x,y)= -k(1+x)^\ell\left(g(1+x)-y\right)^{k-1}+k(1-x)^\ell\left(g(1-x)+y\right)^{k-1}.\end{equation}
Thus, for any choice of $x$, the partial derivative of $f^g_{k,\ell}(x,y)$ with respect to $y$ is zero at $y=y_0(x)$ where
\[y_0(x):=\frac{(1+x)^{\frac{\ell}{k-1}}g(1+x)- (1-x)^{\frac{\ell}{k-1}}g(1-x)}{(1+x)^{\frac{\ell}{k-1}}+ (1-x)^{\frac{\ell}{k-1}}}.\]
There may also be a second such point $y_1(x)$ where
\[y_1(x):=\frac{(1+x)^{\frac{\ell}{k-1}}g(1+x)+ (1-x)^{\frac{\ell}{k-1}}g(1-x)}{(1+x)^{\frac{\ell}{k-1}}- (1-x)^{\frac{\ell}{k-1}}}.\]
However, 
\[y_1(x)=\frac{(1+x)^{\frac{\ell}{k-1}}g(1+x)+ (1-x)^{\frac{\ell}{k-1}}g(1-x)}{(1+x)^{\frac{\ell}{k-1}}- (1-x)^{\frac{\ell}{k-1}}} \geq \frac{(1+x)^{\frac{\ell}{k-1}}g(1+x)+ (1-x)^{\frac{\ell}{k-1}}g(1-x)}{(1+x)^{\frac{\ell}{k-1}}}\]
\[\geq g(1+x)\geq g(1+x)-\rho(1+x).\]
So, $y_1(x)\geq g(1+x)-\rho(1+x)$. We claim that this inequality is strict; indeed, if $x<1$, then the first two inequalities above are both strict and, if $x=1$, then we have $g(1+x)>g(1+x)-\rho(1+x)$ because $\rho(2)>0$. Thus, when $y=y_1(x)$, the constraint $y\leq g(1+x)-\rho(1+x)$ is violated, and so we can ignore the possibility that $y=y_1(x)$ in what follows. 

From this, we can conclude that, for fixed $0< x\leq 1$, the minimum of the function $f^g_{k,\ell}(x,y)$ on $0\leq y\leq g(1+x)-\rho(1+x)$ is achieved at either $y=0$, $y=y_0(x)$ or $y=g(1+x)-\rho(1+x)$. However, plugging $y=0$ into \eqref{eq:partialy} and using the fact that $x>0$ and $g$ is non-decreasing reveals that $f^g_{k,\ell}(x,y)$ is decreasing with respect to $y$ at the point $y=0$. Thus, the only possibilities to consider are $y=y_0(x)$ or $y=g(1+x)-\rho(1+x)$. To determine which of the two is the minimum, we use the second derivative; we have
\[\frac{\partial^2 f^g_{k,\ell}}{\partial y^2}(x,y)= k(k-1)(1+x)^\ell\left(g(1+x)-y\right)^{k-2}+k(k-1)(1-x)^\ell\left(g(1-x)+y\right)^{k-2}\]
which is clearly positive for all $y$ in the range $0\leq y\leq g(1+x)$. Therefore, we conclude that the minimum is attained at $y=y_0(x)$ whenever $y_0(x)< g(1+x)-\rho(1+x)$ and it is attained at $y=g(1+x)-\rho(x)$ otherwise. The inequality in \eqref{eq:ygood} simply translates to $f(x,y_0(x))\geq c$, whereas \eqref{eq:ybad} and \eqref{eq:ybad2} are equivalent to $y_0(x)\geq g(1+x)-\rho(1+x)$ and $f(x,g(1+x)-\rho(1+x))\geq c$, respectively. This completes the proof.
\end{proof}

Let us briefly remark on the way that the interplay between the two conditions in Proposition~\ref{prop:reduction} tends to work in practice. For fixed $g$ and $\ell$, when $k$ is large, one can often get that \eqref{eq:ygood} holds for all $x$ sufficiently close to zero, i.e., in an interval of the form $[0,x_0]$. Conversely, \eqref{eq:ybad} and \eqref{eq:ybad2} tend to hold when $x$ is close to one, i.e., in an interval of the form $[x_1,1]$; this explains the way that we have labelled of the three inequalities of Proposition~\ref{prop:reduction}. To prove that that $f^g_{k,\ell}(x,y)\geq c$ for all $x$ and $y$ satisfying the constraints, it is therefore sufficient to show that $x_1\leq x_0$.

\section{Triangles, Edges and Triangle-Trees}
\label{sec:K3}

Our goal in this section is to prove the following two theorems, which imply Theorems~\ref{th:DE(K3cupK3)} and~\ref{th:5/3}. After proving them, we will provide several applications to concrete families of graphs built up from gluing together triangles and edges in a ``tree-like'' way.

\begin{thm}
\label{th:correlated2}
Let $0\leq \ell\leq 2$ be an integer. If $H$ is $(K_3,2,\ell)$-correlated, then $H$ is common. 
\end{thm}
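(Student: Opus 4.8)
The plan is to apply Lemma~\ref{lem:reduction} with $J=K_3$, $k=2$, and the given $\ell\in\{0,1,2\}$, together with Proposition~\ref{prop:reduction} to reduce to a single-variable inequality, and then verify that inequality. Since $K_3$ is strongly common by Theorem~\ref{th:GoodmanRamsey}, Observation~\ref{obs:z^m} tells us it is $g$-bounded for $g(z)=z^3$. We take $\rho=\rho_{K_3}$, for which we have the explicit lower bounds of Corollary~\ref{cor:Fisher}; to be safe one may instead take a convenient function $\rho\leq \rho_{K_3}$ (for instance the piecewise bound of Corollary~\ref{cor:Fisher}, or even a cruder linear lower bound where that suffices). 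By \eqref{eq:eCondition}, any $(K_3,2,\ell)$-correlated graph $H$ has $e(H)=6+\ell$, and by \eqref{eq:correlation} together with Lemma~\ref{lem:reduction}, $2^{e(H)}\bigl(t(H,W)+t(H,1-W)\bigr)$ is at least the minimum of $f_{g,2,\ell}(x,y)=(1+x)^{\ell}\bigl((1+x)^3-y\bigr)^2+(1-x)^{\ell}\bigl((1-x)^3+y\bigr)^2$ over the admissible region. To conclude $H$ is common, it suffices by \eqref{eq:common} to show this minimum equals $c:=2(1/2)^{e(H)}\cdot 2^{e(H)}=2$; equivalently, $f_{g,2,\ell}(x,y)\geq 2$ on the region $-1\leq x\leq 1$, $0\leq y\leq (1+x)^3-\rho_{K_3}(1+x)$, with equality at the ``random'' point $x=0$, $y=0$.

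Next I would invoke Proposition~\ref{prop:reduction} with $k=2$ (so $\frac{\ell}{k-1}=\ell$) and $c=2$. Note $2g(1)=2=c$, so the hypothesis $0<c\leq 2g(1)$ is met with equality; one should double-check that the proposition's proof still gives the conclusion in this boundary case (the step where $x=0$ yields value $2g(1)\geq c$ becomes an equality, which is fine). The proposition reduces the task to checking, for each $0\leq x\leq 1$, that either
\begin{equation*}
\frac{(1+x)^{\ell}(1-x)^{\ell}\bigl((1+x)^3+(1-x)^3\bigr)^2}{(1+x)^{\ell}+(1-x)^{\ell}}\geq 2
\end{equation*}
(the ``$x\approx 0$'' condition, using $k-1=1$), or else both the ``$x\approx 1$'' condition $y_0(x)\geq (1+x)^3-\rho_{K_3}(1+x)$ and the ``$x\approx 1'$'' condition
\begin{equation*}
(1+x)^{\ell}\rho_{K_3}(1+x)^2+(1-x)^{\ell}\bigl((1-x)^3+(1+x)^3-\rho_{K_3}(1+x)\bigr)^2\geq 2
\end{equation*}
hold. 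Following the heuristic at the end of Section~\ref{sec:solveOpt}, I expect the ``$x\approx 0$'' inequality to hold on an interval $[0,x_0]$ and the two ``$x\approx 1$'' conditions to hold on $[x_1,1]$, and the proof comes down to verifying $x_1\leq x_0$, handled separately for each of the three values $\ell\in\{0,1,2\}$.

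The remaining work is elementary single-variable calculus, but it is where the real content lies. For the ``$x\approx 0$'' inequality, after substituting $(1+x)^3+(1-x)^3=2+6x^2$, one must show $(1-x^2)^{\ell}(2+6x^2)^2\geq 2\bigl((1+x)^{\ell}+(1-x)^{\ell}\bigr)$ for $x\in[0,x_0]$; at $x=0$ both sides equal $8$ and $4$ respectively when... — more carefully, at $x=0$ the left side is $4$ and the right side is $4$, so equality holds, and one checks the left side dominates by comparing Taylor expansions / derivatives near $0$, then identifies the largest $x_0$ where it still holds (it will fail for $x$ near $1$, especially when $\ell=2$ where the $(1-x^2)^{\ell}$ factor kills the left side). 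For the ``$x\approx 1$'' range, I would plug in the relevant branch of Corollary~\ref{cor:Fisher} — for $x$ near $1$ this is the linear bound $\rho_{K_3}(1+x)\geq 16x/3$ on $(1/3,1]$ — and verify both conditions, reducing each to a polynomial inequality in $x$ on $[x_1,1]$ that can be checked by bounding derivatives or by explicit factoring. The main obstacle will be ensuring the two intervals actually overlap for each $\ell$ (this is presumably exactly why the result is stated only for $\ell\leq 2$: for $\ell=3$ the window closes, matching Proposition~\ref{prop:DE(K_3cupK_3)upper}), and managing the case analysis cleanly across the three values of $\ell$ and the breakpoints of the piecewise function $\rho_{K_3}$. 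I would expect to defer some of the more tedious polynomial estimates to the appendix, as the excerpt's introduction indicates the authors do.
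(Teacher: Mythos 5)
Your plan is essentially the paper's proof: reduce via Lemma~\ref{lem:reduction} with $J=K_3$, $k=2$, $g(z)=z^3$ (Goodman + Observation~\ref{obs:z^m}), then apply Proposition~\ref{prop:reduction} with $c=2$ and a linear supersaturation bound for $\rho$, showing the ``$x\approx 0$'' condition covers an initial interval and the ``$x\approx 1$'' conditions cover the rest. Two small implementation differences: the paper takes $\rho(z)=\max\{0,16(z-1)/3\}$ (Bollob\'as only, never invoking the Fisher branch of Corollary~\ref{cor:Fisher}), and it avoids splitting into cases $\ell\in\{0,1,2\}$ by invoking Lemma~\ref{lem:simplifyygood} with $\ell_0=2$ to get the single sufficient inequality $((1+x)^3+(1-x)^3)^2\geq 2\bigl((1+x)^2+(1-x)^2\bigr)/\bigl((1+x)^2(1-x)^2\bigr)$, together with Lemma~\ref{lem:simplifyybad} to reduce \eqref{eq:ybad} to $2\rho(1+x)\geq g(1+x)+g(1-x)$; this uniformizes the argument where your outline proposes a three-way case split. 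Your worry about the boundary case $c=2g(1)$ is harmless: the proof of Proposition~\ref{prop:reduction} explicitly allows equality at $x=0$.
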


\begin{thm}
\label{th:correlated5/3}
Let $k\geq3$ and $0\leq \ell\leq 5k/3$ be integers. If $H$ is $(K_3,k,\ell)$-correlated, then $H$ is common. 
\end{thm}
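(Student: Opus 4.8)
The plan is to run $H$ through the machinery of Sections~\ref{sec:reduction} and~\ref{sec:solveOpt}, exactly as in the proof of Proposition~\ref{prop:oneK3Lower} but with a general power $k$ in place of $k=1$. Fix a $(K_3,k,\ell)$-correlated graph $H$, so that $e(H)=3k+\ell$ and $t(H,W)+t(H,1-W)\ge t(K_2,W)^{\ell}t(K_3,W)^{k}+t(K_2,1-W)^{\ell}t(K_3,1-W)^{k}$ for every graphon $W$. By Goodman's Theorem~\ref{th:GoodmanRamsey} and Observation~\ref{obs:z^m}, $K_3$ is $z^3$-bounded. Let $\rho$ be the non-decreasing lower bound on $\rho_{K_3}$ furnished by Corollary~\ref{cor:Fisher}; then $\rho\le\rho_{K_3}$, $\rho\le z^3$ and $\rho(2)=16/3>0$. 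Applying Lemma~\ref{lem:reduction} with $J=K_3$, $g(z)=z^3$ and this $\rho$ yields $c(H)\ge 2^{-(3k+\ell)}\min f_{z^3,k,\ell}(x,y)$, the minimum being taken over $-1\le x\le1$ and $0\le y\le(1+x)^3-\rho(1+x)$. Since the constant graphon $W\equiv\tfrac12$ shows $c(H)\le 2^{1-e(H)}$, and $H$ is common if and only if $c(H)=2^{1-e(H)}$, it suffices to prove $f_{z^3,k,\ell}(x,y)\ge2$ throughout this region --- that is, to verify the hypotheses of Proposition~\ref{prop:reduction} with $c=2$ (note $2g(1)=2=c$ and $k>1$).

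Next I would make the two alternatives of Proposition~\ref{prop:reduction} explicit. Put $s:=\ell/(k-1)$. Using $(1+x)^3+(1-x)^3=2(1+3x^2)$, a short computation gives $(1+x)^3-y_0(x)=\frac{2(1-x)^{s}(1+3x^2)}{(1+x)^{s}+(1-x)^{s}}$ and $(1-x)^3+y_0(x)=\frac{2(1+x)^{s}(1+3x^2)}{(1+x)^{s}+(1-x)^{s}}$, hence
\[f_{z^3,k,\ell}(x,y_0(x))=2(1+3x^2)\left(\frac{2(1+3x^2)(1-x^2)^{s}}{(1+x)^{s}+(1-x)^{s}}\right)^{k-1}.\]
Consequently \eqref{eq:ygood} is equivalent to $2(1+3x^2)^{k/(k-1)}(1-x^2)^{s}\ge(1+x)^{s}+(1-x)^{s}$, \eqref{eq:ybad} is equivalent to $\rho(1+x)\ge\frac{2(1-x)^{s}(1+3x^2)}{(1+x)^{s}+(1-x)^{s}}$, and \eqref{eq:ybad2} is equivalent to $(1+x)^{\ell}\rho(1+x)^{k}+(1-x)^{\ell}\bigl(2(1+3x^2)-\rho(1+x)\bigr)^{k}\ge2$. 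By the discussion following Proposition~\ref{prop:reduction}, it then suffices to exhibit $x_0$ and $x_1$ with $x_1\le x_0$ such that \eqref{eq:ygood} holds on $[0,x_0]$ and both \eqref{eq:ybad} and \eqref{eq:ybad2} hold on $[x_1,1]$.

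For $x$ near $0$ I would verify \eqref{eq:ygood} by expanding both sides in $x$: they agree (both equal $2$) at $x=0$, and comparing the $x^2$-coefficients reduces the inequality, for $x$ small, to $\frac{3k}{k-1}-s\ge\frac{s(s-1)}{2}$, equivalently $6k(k-1)\ge\ell(\ell+k-1)$. This holds for every integer $k\ge3$ and every integer $0\le\ell\le5k/3$ (at $\ell=5k/3$ it rearranges to $14k^2\ge39k$), and this is exactly where the hypotheses $k\ge3$ and $\ell\le5k/3$ enter; the point of the appendix lemmas is to turn this local statement into an honest interval $[0,x_0]$ with $x_0=x_0(k,\ell)>0$. (When $\ell=0$ one checks directly that \eqref{eq:ygood} holds for all $x\in[0,1]$, so the difficulty is genuinely in the case $\ell>0$.) For $x$ near $1$, the estimates \eqref{eq:ybad} and \eqref{eq:ybad2} are where the supersaturation bound of Corollary~\ref{cor:Fisher} (and behind it Razborov's Triangle Density Theorem~\cite{Razborov08}) does the real work: $\rho(1+x)$ stays comparable to $(1+x)^3$ as $x\to1$, which makes the right-hand side of \eqref{eq:ybad} --- at most $(1+x)^3$, and strictly smaller once $s>0$ --- easy to dominate, and drives the left-hand side of \eqref{eq:ybad2} toward $(1+x)^{3k+\ell}+(1-x)^{3k+\ell}\ge2$; note also that, since $f_{z^3,k,\ell}(x,\cdot)$ is convex with minimum at $y_0(x)$, \eqref{eq:ybad} forces the constrained minimum to be attained at the right endpoint $y=(1+x)^3-\rho(1+x)$, so that \eqref{eq:ybad2} is precisely the assertion that this endpoint value is at least $2$.

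The main obstacle I anticipate is the crossover $x_1\le x_0$, and in particular making it uniform over all admissible $k$ and $\ell$. The tightest case is $k=3$, $\ell=5$, where $s=\tfrac52$ is as large as it gets and the interval $[0,x_0]$ on which \eqref{eq:ygood} holds is short (numerically $x_0\approx0.145$), while \eqref{eq:ybad} only begins to hold around $x\approx0.12$; the overlap, though nonempty, is narrow, so the supersaturation bound for $\rho_{K_3}$ has to be used essentially optimally there. For the rest I expect the proof to dispatch finitely many small values of $k$ by direct computation and to treat all large $k$ simultaneously by a monotonicity-in-$k$ argument (using that $s=\ell/(k-1)\le\frac{5}{3}\cdot\frac{k}{k-1}$ is decreasing), with the appendix lemmas packaging both the ``$[0,x_0]$'' and the ``$[x_1,1]$'' verifications into finitely checkable conditions on $g$, $\rho$, $k$ and $\ell$.
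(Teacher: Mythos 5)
The route you sketch is genuinely different from the paper's. You propose to run Lemma~\ref{lem:reduction} and Proposition~\ref{prop:reduction} directly for arbitrary $k$ and $\ell$ and then handle all admissible pairs $(k,\ell)$ at once via a crossover/monotonicity argument. The paper instead makes a two-step reduction: it first proves a single-parameter analytic inequality, Lemma~\ref{lem:k=3}, which is exactly the $k=3$ instance of the optimization problem with a \emph{fractional} edge-density exponent $\ell=r/3$ for the sixteen integers $0\le r\le 15$ (verified by Proposition~\ref{prop:reduction} plus the appendix lemmas, with some numerical checking), and then applies H\"older's inequality with exponents $p=k/3$, $q=k/(k-3)$ to collapse every case $k\ge 4$ down to one of these sixteen instances (the case $k=3$ being Lemma~\ref{lem:k=3} directly). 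The H\"older step is the idea your proposal is missing, and it is what makes the problem finite.

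This matters because the uniformity you need is not actually free. Your \eqref{eq:ybad} only depends on $s=\ell/(k-1)$, but \eqref{eq:ygood} carries the separate factor $(1+3x^2)^{k/(k-1)}$ and \eqref{eq:ybad2} has $\ell$ and $k$ appearing as genuinely independent exponents, so the family of constraints does not reduce to a one-parameter family in $s$. Your claim that one can ``treat all large $k$ simultaneously by a monotonicity-in-$k$ argument'' is asserted, not proved, and the base inside the $(\cdot)^{k-1}$ in $f_{z^3,k,\ell}(x,y_0(x))$ crosses $1$, so raising $k$ does not uniformly help. Your local expansion near $x=0$ (the inequality $6k(k-1)\ge\ell(\ell+k-1)$) is correct and correctly identifies where the hypotheses $k\ge3$, $\ell\le 5k/3$ enter, but it is a second-order statement at a point, and the appendix lemmas by themselves do not upgrade it to an interval $[0,x_0]$ with $x_0$ bounded away from the value $x_1$ from which \eqref{eq:ybad}, \eqref{eq:ybad2} take over, uniformly in $(k,\ell)$. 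There is also a small imprecision near $x=1$: since $\rho(2)=16/3<8=2^3$, the left-hand side of \eqref{eq:ybad2} does not tend to $(1+x)^{3k+\ell}+(1-x)^{3k+\ell}$; it tends to $2^\ell(16/3)^k$, which happens to exceed $2$, but the reasoning as stated is off. In short, your framework is the right one and your identification of $k=3$, $\ell=5$ as the tight case is correct, but the argument as proposed has a real gap in making the crossover uniform; the paper closes that gap with H\"older rather than with any kind of monotonicity in $k$.
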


We deduce Theorems~\ref{th:DE(K3cupK3)} and~\ref{th:5/3} from these two results, after which we will turn our attention to their proofs. 

\begin{proof}[Proof of Theorem~\ref{th:DE(K3cupK3)}]
By Observation~\ref{obs:union}, for $0\leq \ell\leq 2$, the graph $H:=(2\cdot K_3)\sqcup (\ell\cdot K_2)$ is $(K_3,2,\ell)$-correlated. Thus, it is common by Theorem~\ref{th:correlated2}.
\end{proof}

\begin{proof}[Proof of Theorem~\ref{th:5/3}]
Let $k\geq3$ and $0\leq \ell\leq \lfloor 5k/3\rfloor$ be integers. By Observation~\ref{obs:union}, the graph $H:=(k\cdot K_3)\sqcup (\ell\cdot K_2)$ is $(K_3,k,\ell)$-correlated. Thus, it is common by Theorem~\ref{th:correlated5/3}.
\end{proof}

\begin{proof}[Proof of Theorem~\ref{th:correlated2}]
Let $0\leq \ell\leq2$ and let $H$ be $(K_3,2,\ell)$-correlated. By Theorem~\ref{th:GoodmanRamsey} and Observation~\ref{obs:z^m}, $K_3$ is $g$-bounded where $g(z)=z^m$ for $z\in [0,2]$. Let $\rho:[0,2]\to [0,\infty)$ be defined by $\rho(z)=\max\{0,16(z-1)/3\}$ for $0\leq x\leq 2$ and observe that $\rho\leq g$ and $\rho\leq \rho_{K_3}$  by Corollary~\ref{cor:Fisher}. By Lemma~\ref{lem:reduction}, $2^{6+\ell}c(H)$ is at least the minimum of $f^g_{2,\ell}(x,y)$ over all $-1\leq x\leq 1$ and $0\leq y\leq (1+x)^m-\rho(1+x)$. We will be done if we can show that this minimum is at least $2$. 

By Proposition~\ref{prop:reduction}, it suffices to show that every $x\in [0,1]$ satisfies \eqref{eq:ygood} or \eqref{eq:ybad} and \eqref{eq:ybad2} for $c=2$ and the functions $g$ and $\rho$ defined above. By Lemma~\ref{lem:simplifyygood} with $\ell_0=2$, we see that, to prove \eqref{eq:ygood} for a particular value of $x$, it suffices to show that \eqref{eq:ygoodweaker} holds. In this case, this translates to
\begin{equation}\label{eq:ygood2K3}((1+x)^3+(1-x)^3)^2\geq \frac{2((1+x)^2+(1-x)^2)}{(1+x)^2(1-x)^2}.\end{equation}
The above inequality is equivalent to $h(x)\geq 2$ where $h:[0,1)\to \mathbb{R}$ is defined by 
\[h(x):=\frac{(1 - x)^2 (1 + x)^2 ((1 - x)^3 + (1 + x)^3)^2}{(1 - x)^2 + (1 + x)^2}.\]
Note that $h(0)=2$. Also,
\[\frac{dh}{dx}(x) = \frac{4 x (x - 1) (x + 1) (1 + 3 x^2) (-3 + 10 x^2 + 9 x^4)}{(1 + x^2)^2}\]
which is non-negative for all $0\leq x\leq \frac{1}{3}\sqrt{-5 + 2 \sqrt{13}}\approx 0.495$. Thus, \eqref{eq:ygood2K3} holds for all $0\leq x\leq 0.49$. 

So, it suffices to show that \eqref{eq:ybad} and \eqref{eq:ybad2} both hold whenever $0.49\leq x\leq 1$. By Lemma~\ref{lem:simplifyybad}, to prove that a particular $x$ satisfies \eqref{eq:ybad}, it is enough to show that \eqref{eq:ybadweaker} holds. That is, 
\[2\rho(1+x)\geq (1+x)^3+(1-x)^3.\]
Plugging in $\rho(1+x)=16x/3$, the inequality becomes $32x/3 \geq (1+x)^3+(1-x)^3$. This holds for all $x_1\leq x\leq 1$ where $x_1=\frac{1}{9}(8-\sqrt{37})\approx 0.21 < 0.49$. Finally, we analyze \eqref{eq:ybad2}. It simplifies to
\[(1+x)^{\ell}(16x/3)^2+(1-x)^{\ell}\left((1-x)^3+(1+x)^3-16x/3\right)^2\geq 2.\]
For $x\geq0.49$ and $\ell\geq0$, we have
\[(1+x)^{\ell}(16x/3)^2+(1-x)^{\ell}\left((1-x)^3+(1+x)^3-16x/3\right)^2 \geq (16x/3)^2>6.8 >2\]
and so \eqref{eq:ybad2} holds for all $x\geq 0.49$. The result follows. 
\end{proof}

Next, we prove the following lemma from which Theorem~\ref{th:correlated5/3} will be derived. The proof is similar to that of  Theorem~\ref{th:correlated2}, except that we sometimes need to deal with the more complicated inequalities \eqref{eq:ygood} and \eqref{eq:ybad} themselves as opposed to the sufficient conditions from Appendix~\ref{appendix}.

\begin{lem}
\label{lem:k=3}
For any integer $r$ such that $0\leq r\leq 15$ and any graphon $W$,
\begin{equation}\label{eq:k=3}2^{9+r/3}\left(t(K_2,W)^{r/3}t(K_3,W)^3 + t(K_2,1-W)^{r/3}t(K_3,1-W)^3\right)\geq2.\end{equation}
\end{lem}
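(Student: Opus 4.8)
The strategy is to apply Proposition~\ref{prop:reduction} with $k=3$, $\ell=r/3$, $g(z)=z^3$, $c=2$, and $\rho(z)=\max\{0,16(z-1)/3\}$ (which satisfies $\rho\leq g$ and $\rho\leq\rho_{K_3}$ by Corollary~\ref{cor:Fisher}). By Lemma~\ref{lem:reduction} and Observation~\ref{obs:union}, the left-hand side of \eqref{eq:k=3} is at least the minimum of $f_{g,3,\ell}(x,y)$ over the feasible region, so it suffices to verify that either \eqref{eq:ygood} holds, or both \eqref{eq:ybad} and \eqref{eq:ybad2} hold, for every $0\leq x\leq 1$. As noted in the remark following Proposition~\ref{prop:reduction}, the concrete task is to find a threshold $x_0$ such that \eqref{eq:ygood} holds on $[0,x_0]$ and a threshold $x_1$ such that \eqref{eq:ybad} and \eqref{eq:ybad2} hold on $[x_1,1]$, and then check $x_1\leq x_0$. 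The difficulty is that now $\ell=r/3$ can be as large as $5$, so $\ell/(k-1)=r/6$ appears as a fractional exponent, and unlike in the proof of Theorem~\ref{th:correlated2} we cannot always route through the simplified sufficient conditions of the appendix; we will sometimes have to estimate \eqref{eq:ygood} and \eqref{eq:ybad} directly. Since $r$ ranges over $\{0,1,\dots,15\}$, I expect the cleanest approach is to prove the bounds uniformly in $r$ wherever possible and handle a small number of boundary values of $r$ (likely $r=15$, the extreme case) separately.

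\textbf{Handling \eqref{eq:ygood} near $x=0$.} For $x$ in an interval $[0,x_0]$ I would show
\[
\frac{(1+x)^{\ell}(1-x)^{\ell}\bigl((1+x)^3+(1-x)^3\bigr)^3}{\bigl((1+x)^{\ell/2}+(1-x)^{\ell/2}\bigr)^2}\geq 2.
\]
At $x=0$ both sides equal $2^3/2^2\cdot 1=2$, so the inequality is tight at the origin and what matters is the sign of the derivative of the left-hand side at $0$ and on a neighbourhood. The numerator $(1-x^2)^{\ell}\bigl(2+6x^2\bigr)^3$ is, to leading order, $2^3\bigl(1+(9-\ell)x^2+O(x^4)\bigr)$, while the denominator $\bigl((1+x)^{\ell/2}+(1-x)^{\ell/2}\bigr)^2$ expands as $4\bigl(1+\tfrac{\ell(\ell-2)}{4}x^2+O(x^4)\bigr)$; since $\ell\leq 5$, the coefficient $9-\ell - \tfrac{\ell(\ell-2)}{4}$ is positive, so the ratio exceeds $2$ for small $x>0$. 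To get an explicit $x_0$ valid for all $r\in\{0,\dots,15\}$ I would bound $(1-x^2)^\ell$ from below using $\ell\leq 5$ and the denominator from above, reducing to a polynomial inequality in $x$ that can be certified on an interval like $[0,0.4]$; the worst case is the largest $\ell$, so it is enough to verify it for $\ell=5$ (and the inequality only improves for smaller $\ell$, by monotonicity of each factor in $\ell$ on the relevant range).

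\textbf{Handling \eqref{eq:ybad} and \eqref{eq:ybad2} near $x=1$.} For \eqref{eq:ybad2} the situation is easy: for $x$ bounded away from $0$, say $x\geq x_1$ with $x_1$ of order $0.3$–$0.4$, the first term $(1+x)^\ell\rho(1+x)^3=(1+x)^\ell(16x/3)^3$ alone already exceeds $2$ (even with $\ell=0$, since $(16x/3)^3>2$ once $x>3/16\cdot 2^{1/3}\approx 0.236$), so \eqref{eq:ybad2} holds comfortably on $[x_1,1]$ for all $r$. The more delicate condition is \eqref{eq:ybad}, which after substituting $g(z)=z^3$ and $\rho(1+x)=16x/3$ becomes
\[
\frac{(1+x)^{3+\ell/2}-(1-x)^{3+\ell/2}}{(1+x)^{\ell/2}+(1-x)^{\ell/2}}\geq (1+x)^3-\tfrac{16x}{3}.
\]
At $x=1$ the left side equals $(2^{3+\ell/2})/(2^{\ell/2})=8$ and the right side equals $8-16/3<8$, so the inequality is strict at $x=1$; I would show it persists on an interval $[x_1,1]$ by bounding the left-hand side below — e.g.\ since $(1-x)^{\ell/2}\leq(1-x)$ when $\ell\geq 2$ (and is $\leq 1$ trivially) while $(1+x)^{3+\ell/2}\geq(1+x)^3$, one gets a cruder but still sufficient lower bound that is a rational function of $x$, to be compared against the cubic $(1+x)^3-16x/3$. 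Once explicit $x_0$ (from the previous paragraph) and $x_1$ are in hand with $x_1\leq x_0$, Proposition~\ref{prop:reduction} yields $f_{g,3,\ell}(x,y)\geq 2$ everywhere, which is exactly \eqref{eq:k=3}. The main obstacle, as anticipated, is pushing the two thresholds to overlap while the fractional exponent $\ell/2$ drifts up to $5/2$; I expect $r=15$ to be the binding case in both the small-$x$ and large-$x$ regimes, and it may require a slightly sharper estimate (or a direct numerical check on a grid with error bounds, as done elsewhere in the paper) rather than the uniform-in-$r$ bounds that suffice for smaller $r$.
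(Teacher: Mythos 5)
Your high-level plan is exactly the paper's: apply Lemma~\ref{lem:reduction} and Proposition~\ref{prop:reduction} with $J=K_3$, $g(z)=z^3$, $\rho(z)=16(z-1)/3$, $k=3$, $\ell=r/3$, $c=2$, and then show that \eqref{eq:ygood} covers small $x$ while \eqref{eq:ybad} and \eqref{eq:ybad2} cover large $x$, with overlapping intervals. The local analysis at $x=0$ (the sign of $9-\ell-\ell(\ell-2)/4$) and at $x=1$ (the inequality $8\geq 8-16/3$) is also correct.

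However, the quantitative landscape you envision is substantially off, in a way that would derail the plan if executed as written. For $\ell=5$ (i.e., $r=15$), the threshold $x_0$ below which \eqref{eq:ygood} holds is about $0.14$, not $0.4$: plugging $x=0.4$ into $\frac{(1-x^2)^5(2+6x^2)^3}{((1+x)^{5/2}+(1-x)^{5/2})^2}$ gives roughly $1.6<2$, so your claim that the inequality is "certified on an interval like $[0,0.4]$" for the worst case $\ell=5$ is false. Correspondingly, $x_1\approx 0.3$--$0.4$ is nowhere near small enough; the paper needs $x_1\leq 0.1397$ for $r\in\{13,14,15\}$, and the overlap $[x_1,x_0]$ is razor-thin rather than comfortable. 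A related slip: you argue \eqref{eq:ybad2} only from the first term $(16x/3)^3>2$, which gives $x\gtrsim 0.24$, but you actually need it on $[0.14,1]$; the second term must be used (in fact the paper checks \eqref{eq:ybad2K3} on all of $[0,1]$). You do flag that sharper per-$r$ estimates or grid checks may be needed, and that is indeed what the paper does: it lists thresholds $x_{0,r}$ for each $r$, uses the appendix shortcut (Lemma~\ref{lem:simplifyygood2} and Lemma~\ref{lem:simplifyybad}) only for $r\leq 12$ or $r\leq 13$ where the margin allows, and handles $r\in\{13,14,15\}$ by verifying \eqref{eq:ygood3K3} and \eqref{eq:ybadK3} numerically on narrow intervals. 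So the escape hatch you mention is not a fallback but the core of the argument for the binding cases.
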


\begin{proof}
We apply Lemma~\ref{lem:reduction} and Proposition~\ref{prop:reduction} with $J=K_3$, $\rho(z)=16(z-1)/3$, $g(z)=z^3$, $k=3$ and $\ell=r/3$. The inequality in \eqref{eq:ygood} becomes
\begin{equation}
\label{eq:ygood3K3}
\frac{(1+x)^{r/3}(1-x)^{r/3}\left((1+x)^3+(1-x)^3\right)^3}{\left((1+x)^{\frac{r}{6}}+(1-x)^{\frac{r}{6}}\right)^{2}}\geq 2.
\end{equation}
By Lemma~\ref{lem:simplifyygood2}, to prove \eqref{eq:ygood3K3}, it suffices to show that 
\begin{equation}
\label{eq:ygoodweaker3K3}
(1+x)^{r/3}(1-x)^{r/3}\left(\frac{(1+x)^3+(1-x)^3}{2}\right)^{3-\frac{r}{9}}\geq 1.
\end{equation}
For each integer $r$ such that $0\leq r\leq 13$, one can verify numerically that \eqref{eq:ygoodweaker3K3} is satisfied for all $x$ in the range $0\leq x\leq x_{0,r}$ where
\[x_{0,0}=1, \quad x_{0,1}\geq 0.99, \quad x_{0,2}\geq 0.99,\quad  x_{0,3}\geq 0.98,\quad  x_{0,4}\geq 0.95,\quad x_{0,5}\geq 0.91,\]
\[x_{0,6}\geq 0.86, \quad x_{0,7}\geq 0.79,\quad x_{0,8}\geq 0.72,\quad x_{0,9}\geq 0.64,\quad x_{0,10}\geq 0.55,\quad x_{0,11}\geq 0.46,\]
\[ x_{0,12}\geq 0.34,\quad x_{0,13}\geq 0.19.\]
Also, for $r\in \{14,15\}$, one can verify numerically that \eqref{eq:ygood3K3} is satisfied whenever $x$ is in the range $0\leq x\leq x_{0,r}$ where $x_{0,r}\geq 0.14$; the worst case is $r=15$.

So, to complete the proof, we need to show that, for each $r$, \eqref{eq:ybad} and \eqref{eq:ybad2} hold for all $x_{0,r}\leq x\leq1$. Plugging in the values of $k,\ell,\rho,g$, these inequalities translate to 
\begin{equation}\label{eq:ybadK3}\frac{(1+x)^3(1+x)^{\frac{r}{6}}-(1-x)^3(1-x)^{\frac{r}{6}}}{(1+x)^{\frac{r}{6}}+ (1-x)^{\frac{r}{6}}}\geq (1+x)^3-16x/3\end{equation}
and
\begin{equation}\label{eq:ybad2K3}(1+x)^{r/3}(16x/3)^3+(1-x)^{r/3}\left((1-x)^3+(1+x)^3-16x/3\right)^3\geq 2\end{equation}
respectively. For each integer $0\leq r\leq 15$, \eqref{eq:ybad2K3} holds for all $0\leq x\leq 1$; again, we are only able to verify this numerically. By Lemma~\ref{lem:simplifyybad}, for a given $0\leq x\leq 1$, if
\[32x/3\geq (1+x)^3+(1-x)^3,\]
then \eqref{eq:ybadK3} holds automatically. The above inequality holds for all $0.22\leq x\leq 1$. Thus, since $x_{0,r}>0.22$ for $r\in\{0,\dots,12\}$, the proof is complete in these cases. 

The last thing to do is to verify that \eqref{eq:ybadK3} holds for all $x_{0,r}\leq x\leq 1$ in the cases $r\in\{13,14,15\}$. One can check numerically that the inequality holds for all $0.1397\leq x\leq 1$ for each $r\in\{13,14,15\}$; the worst case is $r=13$. Since $x_{0,r}>0.1397$ for $r\in\{13,14,15\}$, the proof is complete.
\end{proof}

Next, we use Lemma~\ref{lem:k=3} and H\"older's Inequality to prove Theorem~\ref{th:correlated5/3}.

\begin{proof}[Proof of Theorem~\ref{th:correlated5/3}]
Since $H$ is $(K_3,k,\ell)$-correlated, for any graphon $W$, we have
\[t(H,W)+t(H,1-W) \geq t(K_2,W)^\ell t(K_3,W)^k+t(K_2,1-W)^\ell t(K_3,1-W)^k.\]
Define 
\[p:=\frac{k}{3}, \quad q:=\frac{k}{k-3},\]
\[x_1:=t(K_2,W)^{\frac{3\ell}{k}}t(K_3,W)^3,\quad x_2:=t(K_2,1-W)^{\frac{3\ell}{k}}t(K_3,1-W)^3,\]
\[\alpha:=\frac{1}{3}\left(\left\lceil\frac{9\ell}{k}\right\rceil - \frac{9\ell}{k}\right),\quad  r:=\left\lceil\frac{9\ell}{k}\right\rceil,\]
\[y_1:=t(K_2,W)^{\alpha},\quad y_2:=t(K_2,1-W)^{\alpha}.\]
By Lemma~\ref{lem:Holder},
\[\left(x_1^p+x_2^p\right)^{1/p}\left(y_1^q+y_2^q\right)^{1/q}\geq x_1y_1+x_2y_2.\]
In other words,
\[t(K_2,W)^\ell t(K_3,W)^k+t(K_2,1-W)^\ell t(K_3,1-W)^k\]
\[\geq \left(\frac{t(K_2,W)^{\frac{3\ell}{k}+\alpha}t(K_3,W)^3+t(K_2,1-W)^{\frac{3\ell}{k}+\alpha}t(K_3,1-W)^3}{\left(t(K_2,W)^{\alpha q}+t(K_2,1-W)^{\alpha q}\right)^{(k-3)/k}}\right)^{k/3}.\]
By definition of $\alpha$, we have $\frac{3\ell}{k}+\alpha = \frac{r}{3}$. Since $0\leq \ell\leq 5k/3$, we have that $r$ is an integer satisfying $0\leq r\leq 15$. So, by Lemma~\ref{lem:k=3}, we get that the right side of the above inequality is at least
\[\frac{2^{(-8-r/3)(k/3)}}{\left(t(K_2,W)^{\alpha q}+t(K_2,1-W)^{\alpha q}\right)^{(k-3)/3}}.\]
Note that 
\[\alpha q = \frac{1}{3}\left(\left\lceil\frac{9\ell}{k}\right\rceil - \frac{9\ell}{k}\right)\left(\frac{k}{k-3}\right)\leq \frac{1}{3}\left(\frac{k-1}{k}\right)\left(\frac{k}{k-3}\right) = \frac{1}{3}\left(\frac{k-1}{k-3}\right)\leq 1\]
where the last inequality uses that $k\geq4$. Thus, since $t(K_2,W)+t(K_2,1-W)=1$, we have that $t(K_2,W)^{\alpha q}+t(K_2,1-W)^{\alpha q}\leq (1/2)^{\alpha q} + (1/2)^{\alpha q}$ by Jensen's Inequality. So, we get
\[\frac{2^{(-8-r/3)(k/3)}}{\left(t(K_2,W)^{\alpha q}+t(K_2,1-W)^{\alpha q}\right)^{(k-3)/3}}\geq \frac{2^{-\frac{8k}{3}-\frac{kr}{9}}}{\left((1/2)^{\alpha q} + (1/2)^{\alpha q}\right)^{(k-3)/3}}= \frac{2^{-\frac{8k}{3}-\frac{kr}{9}}}{2^{(k-3)/3}(1/2)^{\alpha \left(k/3\right)}}\]
\[= \frac{2^{-\frac{8k}{3}-\frac{kr}{9}}}{2^{k/3-1}2^{-\alpha \left(k/3\right)}}= \frac{2^{1-3k-\frac{kr}{9}}}{2^{-\frac{1}{3}\left(r- \frac{9\ell}{k}\right) \left(k/3\right)}}= \frac{2^{1-3k-\frac{kr}{9}}}{2^{-\frac{kr}{9}+\ell}}=2\left(1/2\right)^{3k+\ell}=2\left(1/2\right)^{e(H)}\]
where the last equality follows from \eqref{eq:eCondition} and the fact that $H$ is $(K_3,k,\ell)$-correlated. Therefore, $H$ is common and we are done. 
\end{proof}

To close this section, we provide some concrete families of $(K_3,k,\ell)$-correlated graphs. Following~\cite{GrzesikLeeLidickyVolec22}, the class $\mathcal{T}$ of \emph{triangle-trees} is defined as follows. We have $K_3\in\mathcal{T}$ and, if $H\in\mathcal{T}$, then any graph obtained from $H$ by either adding two vertices $u,v$ to $H$ and edges $uv,uw,vw$ for some $w\in V(H)$, or adding one vertex $x$ to $H$ and edges $xy$ and $xz$ for some $yz\in E(H)$, is also in $\mathcal{T}$. In other words, the graphs in $\mathcal{T}$ are built up by gluing triangles together on vertices or edges in a tree-like manner. Our results in this section apply to a slightly more general class of graphs (see Example~\ref{ex:TT}), which is a special case of the class considered in~\cite[Section~6]{Behague+Second}. For any set $X$, let $2^X$ denote the power set of $X$.

\begin{defn}
Given a graph $F$, define an \emph{$F$-tree} to be a pair $(T,\varphi)$ such that $T$ is a tree and $\varphi:V(T)\cup E(T)\to 2^{V(F)}$ satisfies $\varphi(st)\subsetneq \varphi(s)\cap \varphi(t)$ for all $st\in E(T)$. 
\end{defn}

\begin{defn}
Let $F$ be a graph and $(T,\varphi)$ be an $F$-tree. Define $H(T,\varphi)$ to be the graph constructed by taking pairwise disjoint graphs $J_t$ for $t\in V(T)$, where $J_t$ is isomorphic to $F[\varphi(t)]$, and then, for each $st\in E(T)$, identifying the vertex corresponding to $w$ in the $J_s$ with the vertex corresponding to $w$ in $J_t$ for all $w\in \varphi(st)$.
\end{defn}

\begin{ex}
\label{ex:union}
Let $F$ be a graph, let $T$ be any tree with $k$ vertices and suppose that $\varphi$ maps every vertex of $T$ to $V(F)$ and every edge of $T$ to $\emptyset$. Then $H(T,\varphi)$ is nothing more than the graph $k\cdot F$.
\end{ex}

\begin{ex}
\label{ex:TT}
Let $(T,\varphi)$ be a $K_3$-tree such that $|\varphi(t)|=3$ for all $t\in V(T)$ and $1\leq|\varphi(e)|\leq 2$ for all $e\in E(T)$. Then $H(T,\varphi)$ is a triangle-tree. Moreover, every triangle-tree can be represented as $H(T,\varphi)$ for some such $T$ and $\varphi$. 
\end{ex}

\begin{defn}
Given a $K_3$-tree $(T,\varphi)$ and $1\leq j\leq 3$, define
\[v_j(T,\varphi):=|\{t\in V(T): |\varphi(t)|=j\}|.\]
Also, for $0\leq j\leq 2$, define
\[e_j(T,\varphi):=|\{e\in E(T): |\varphi(e)|=j\}|.\]
\end{defn}

The statement of the following lemma is technically slightly more general than~\cite[Lemma~3.6]{GrzesikLeeLidickyVolec22}. However, the argument in the proof of~\cite[Lemma~3.6]{GrzesikLeeLidickyVolec22} applies, with virtually no modifications, to prove it; for this reason, we attribute it to~\cite{GrzesikLeeLidickyVolec22}. A more general (but also more technical) statement for odd cycles which implies this lemma can be found in~\cite[Corollary~6.32]{Behague+Second}.

\begin{lem}[Grzesik, Lee, Lidick\'y and Volec~{\cite[Lemma~3.6]{GrzesikLeeLidickyVolec22}}]
\label{lem:tt}
Let $(T,\varphi)$ be a $K_3$-tree, let $k=v_3(T,\varphi)$, let $\gamma=e_2(T,\varphi)-v_2(T,\varphi)\geq0$. If $H=H(T,\varphi)$, then $H$ is $(K_3,k,-\gamma)$-correlated.
\end{lem}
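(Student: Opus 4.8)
The plan is to prove the stated correlation inequality $t(H,W)\ge t(K_3,W)^k t(K_2,W)^{-\gamma}$ by induction on the structure of the $K_3$-tree $(T,\varphi)$, peeling off leaves of $T$ one at a time. The base case is $T$ a single vertex $t$ with $|\varphi(t)|=3$, so that $H=K_3$, $k=1$, $\gamma=0$, and the inequality reads $t(K_3,W)\ge t(K_3,W)$. For the edge-count bookkeeping, note first that $e(H)=3v_3+v_2 - e_2 - 2e_1$ (each $J_t$ contributes $\binom{|\varphi(t)|}{2}$ edges, and each tree edge $st$ identifies $\binom{|\varphi(st)|}{2}$ previously-distinct edges, so subtract $e_1+3e_2$; but wait, $e_2$ edges of $E(T)$ identify a full triangle, so each such identification removes $3$ edges). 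Combining with $k=v_3$, $\gamma=e_2-v_2$, one checks $e(H)=3k-\gamma+(\text{contribution of size-}1\text{ and size-}2\text{ bag nodes})$; the precise accounting must reconcile with $e(H)=3k+(-\gamma)=ke(K_3)+\ell$ as demanded by \eqref{eq:eCondition} with $\ell=-\gamma$, and I would verify this identity first as it is a purely combinatorial sanity check independent of the analytic content.

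For the inductive step, pick a leaf $t$ of $T$ with unique neighbour $s$, and let $T'=T-t$, $\varphi'=\varphi|_{V(T')\cup E(T')}$. The graph $H$ is obtained from $H':=H(T',\varphi')$ by gluing a copy $J_t$ of $F[\varphi(t)]\cong K_3$ (since $|\varphi(t)|=3$) onto $H'$ along the set $\varphi(st)$, which has size $1$ or $2$. Case $|\varphi(st)|=2$: we attach a new vertex joined to an existing edge $e$ of $H'$; this is the operation of gluing a ``cherry'' (path $P_3$) onto an edge, and the multiplicativity of homomorphism densities over gluings along cliques (the standard tensor-power/Cauchy–Schwarz argument, or directly the fact that conditioning on the values at $\varphi(st)$ factors the integral) gives $t(H,W)\ge \ldots$; here $v_2$ is unchanged, $e_2$ increases by $1$, so $\gamma$ increases by $1$, $k$ increases by $1$, matching the appearance of an extra factor $t(K_3,W)/t(\text{edge},W)$ — this is exactly one application of the fact that a triangle is $(K_3,1,0)$-correlated relative to its base edge, i.e. Sidorenko-type reasoning via Cauchy–Schwarz on the two ``wings''. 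Case $|\varphi(st)|=1$: we glue a triangle onto $H'$ at a single vertex; then $H=H'\sqcup'$-style at a vertex, and $t(H,W)\ge t(H',W)\cdot t(K_3,W)/t(K_2,W)^0$? No — gluing a triangle at a vertex multiplies densities only after a Cauchy–Schwarz step, yielding $t(H,W)\ge t(H',W)\,t(K_3,W)$ is false in general; rather the correct statement comes from the convexity/tensor argument showing $t(H,W)\ge t(H',W)t(K_3,W)/1$, consistent with $k\mapsto k+1$, $v_2,e_2$ unchanged so $\gamma$ unchanged, $\ell=-\gamma$ unchanged — but then the edge count jumps by $3$ while $ke(K_3)+\ell$ jumps by $3$, which is consistent.

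The engine in both cases is the elementary inequality that for a graphon $W$, a graph $H'$ with a distinguished clique $S$, and $J$ another graph containing $S$, the ``glued'' graph $H=H'\cup_S J$ satisfies
\[
t(H,W)\;\ge\; \frac{t(H',W)\,t(J,W)}{t(K_{|S|},W)}
\]
whenever the relevant denominators are positive, where $t(K_1,W)=1$ and $t(K_2,W)$ is the edge density; this follows from writing both sides as integrals over the fibre variables $x_S$ and applying the Cauchy–Schwarz / FKG-type rearrangement (for $|S|\le 2$ the rooted homomorphism density functions are nonnegative and the inequality is a consequence of the Chebyshev/Cauchy–Schwarz inequality applied after conditioning on $x_S$). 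Iterating this from the leaves inward, each size-$2$ gluing contributes a factor $t(K_3,W)/t(K_2,W)$ and each size-$1$ gluing a factor $t(K_3,W)/t(K_1,W)=t(K_3,W)$, so after processing all $|E(T)|$ gluings we collect exactly $t(K_3,W)^{v_3(T,\varphi)-1}\cdot t(K_2,W)^{-e_2(T,\varphi)}$ times the base factor; the vertex-gluings at size-$2$ bags (counted by $v_2$) each leave one copy of the size-$2$ bag's edge in the denominator uncancelled in a way that converts $t(K_2,W)^{-e_2}$ into $t(K_2,W)^{-(e_2-v_2)}=t(K_2,W)^{-\gamma}$ — tracking this cancellation carefully is the bookkeeping heart of the argument.

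The main obstacle I expect is \emph{not} the gluing inequality itself, which is classical, but rather the careful accounting that shows the accumulated exponent of $t(K_2,W)$ is exactly $-\gamma=-(e_2-v_2)$ and the exponent of $t(K_3,W)$ is exactly $k=v_3$, after all cancellations from both the size-$1$ and size-$2$ tree edges and from bag nodes of size $1$ and $2$ (which contribute single edges and isolated vertices that must be absorbed). The cleanest route is probably to set up the induction so that the \emph{statement} being proved by induction is precisely ``$H(T,\varphi)$ is $(K_3,v_3,v_2-e_2)$-correlated'' and to verify that removing a leaf $t$ changes $v_3,v_2,e_2$ and the target exponents in a way that is compatible with exactly one application of the gluing inequality — then the analytic content reduces to the single lemma above plus the observation that $\gamma\ge 0$ guarantees we never need to divide by $t(K_2,W)$ a net positive number of times (so the ``$0^0=1$'' and positivity caveats in the definition of $(J,k,\ell)$-correlated are handled). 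If one prefers, this is also exactly the specialization of \cite[Corollary~6.32]{Behague+Second} or the argument of \cite[Lemma~3.6]{GrzesikLeeLidickyVolec22} to the triangle case, and one may simply cite it; but the self-contained induction above is short enough to include.
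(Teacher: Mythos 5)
The paper does not actually prove this lemma; it simply cites~\cite[Lemma~3.6]{GrzesikLeeLidickyVolec22} and remarks that the argument there extends with ``virtually no modifications.'' So the relevant question is whether your self-contained induction is sound. It is not: the ``engine'' you rely on, the clique-gluing inequality
\[
t(H'\cup_S J,W)\;\ge\;\frac{t(H',W)\,t(J,W)}{t(K_{|S|},W)},
\]
is \emph{false} as a general statement, even for $|S|=1$ and $J=K_3$. It is a positive-correlation (Chebyshev/FKG) claim about the rooted densities $t_{x_S}(H',W)$ and $t_{x_S}(J,W)$, and nonnegativity alone does not give it; one needs these two functions to be similarly ordered with respect to the relevant measure, which can fail. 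Concrete counterexample for $|S|=1$: let $W$ be the step graphon on three equal parts $I_1,I_2,I_3$ with $W\equiv 1$ on $(I_1\times I_2)\cup(I_2\times I_1)$, $W\equiv p$ on $I_3\times I_3$ for some $0<p<1$, and $W\equiv 0$ elsewhere. Take $H'=K_2$ rooted at one endpoint and $J=K_3$ rooted at one vertex, so $H'\cup_v J$ is the paw $P$. A direct computation gives $t(K_2,W)=(2+p)/9$, $t(K_3,W)=p^3/27$, and $t(P,W)=p^4/81$, so
\[
t(P,W)-t(K_2,W)\,t(K_3,W)=\frac{3p^4-(2+p)p^3}{243}=\frac{2p^3(p-1)}{243}<0.
\]
The rooted degree is large on $I_1\cup I_2$ while the rooted triangle density is supported on $I_3$, and this anti-correlation kills the claimed inequality.

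This is not a removable technicality: the same phenomenon is exactly why the paw is uncommon, and it shows that ``peel a leaf and apply the gluing inequality'' cannot be made into a valid induction with the invariant you chose, because the intermediate graph $H'$ produced by deleting a leaf bag need not itself satisfy the global hypothesis $\gamma\ge0$ (and, even when it does, the inequality you invoke still requires a correlation argument you have not supplied). The actual proof in~\cite{GrzesikLeeLidickyVolec22} works with a \emph{rooted} (pointwise) invariant that is propagated through the tree via Cauchy--Schwarz/Jensen on each tree edge, rather than by comparing global homomorphism densities at each step; it is the rooted version that makes the Cauchy--Schwarz steps go through (e.g., for edge-gluing two triangles one has $\int W f^2\ge(\int Wf)^2/\int W$ because the two rooted factors are \emph{identical}, not merely nonnegative). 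Separately, your edge-count sanity check is garbled: the correct identity is $e(H)=3v_3+v_2-e_2$, since a bag of size $j$ contributes $\binom{j}{2}$ edges and a tree edge with $|\varphi(st)|=j$ identifies $\binom{j}{2}$ edges; the terms $-2e_1$ and ``removes $3$ edges'' you wrote are wrong, although the end result $e(H)=3k-\gamma$ you aim for does hold.
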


We derive two consequences of Theorems~\ref{th:correlated2} and~\ref{th:correlated5/3}. Note that, since $\ell\cdot K_2$ is Sidorenko for all $\ell$, these results imply lower bounds on the number of disjoint edges that can be added to $H$ while maintaining the property that it is common whenever $(T,\varphi)$ is a $K_3$-tree and $H=H(T,\varphi)$.

\begin{cor}
\label{cor:2TT}
Let $(T,\varphi)$ be a $K_3$-tree such that $v_3(T,\varphi)=2$ and $e_2(T,\varphi)\geq v_2(T,\varphi)$. If $H=H(T,\varphi)$ and $F$ is a Sidorenko graph such that
\[e(F)\leq 2+e_2(T,\varphi)- v_2(T,\varphi),\]
then $H\sqcup F$ is common.
\end{cor}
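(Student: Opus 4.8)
The plan is to combine Theorem~\ref{th:correlated2} (for the case $k=2$) or Theorem~\ref{th:correlated5/3} (for the case $k\geq 3$) with Lemma~\ref{lem:tt} and the fact that homomorphism densities multiply over disjoint unions. First I would set $k=v_3(T,\varphi)=2$ and $\gamma=e_2(T,\varphi)-v_2(T,\varphi)\geq 0$, so that Lemma~\ref{lem:tt} gives that $H=H(T,\varphi)$ is $(K_3,2,-\gamma)$-correlated. The goal is to show $H\sqcup F$ is $(K_3,2,\ell)$-correlated for a suitable nonnegative integer $\ell$, and then invoke Theorem~\ref{th:correlated2} provided $\ell\leq 2$.

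The key computation is to track the two defining properties of $(K_3,2,\ell)$-correlatedness. For the edge count, since $F$ is a graph we have $e(H\sqcup F)=e(H)+e(F)=2e(K_3)-\gamma+e(F)$, so setting $\ell:=e(F)-\gamma$ makes \eqref{eq:eCondition} hold with $J=K_3$, $k=2$. For the correlation inequality \eqref{eq:correlation}, I would use Observation~\ref{obs:union} to write $t(H\sqcup F,W)=t(H,W)t(F,W)$ for every graphon $W$; then, using that $H$ is $(K_3,2,-\gamma)$-correlated and that $F$ is Sidorenko (so $t(F,W)\geq t(K_2,W)^{e(F)}$), I get
\[t(H\sqcup F,W)=t(H,W)t(F,W)\geq t(K_3,W)^2 t(K_2,W)^{-\gamma}\cdot t(K_2,W)^{e(F)}=t(K_3,W)^2 t(K_2,W)^{\ell},\]
which is exactly \eqref{eq:correlation} with $J=K_3$, $k=2$, and $\ell=e(F)-\gamma$. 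Here I should be careful with the degenerate case $t(K_2,W)=0$: when $e(F)-\gamma=0$ the convention $0^0=1$ in the definition of $(J,k,\ell)$-correlated handles it, and when $e(F)-\gamma>0$ both sides vanish, while the case $e(F)<\gamma$ does not arise because then $\ell<0$ would fall outside the hypothesis of Theorem~\ref{th:correlated2} — but the hypothesis $e(F)\leq 2+e_2(T,\varphi)-v_2(T,\varphi)=2+\gamma$ together with $\gamma\geq 0$ and $e(F)\geq 1$ (as $F$ is non-empty, being Sidorenko) guarantees $0\leq \ell\leq 2$. Wait — I also need $e(F)\geq\gamma$, i.e. $\ell\geq 0$; this is not immediate from the stated hypotheses, so either the intended reading is that $F$ may be empty with $t(F,W)=1=t(K_2,W)^0$ absorbing the discrepancy, or one takes $\ell=\max\{0,e(F)-\gamma\}$ and notes that adding a Sidorenko factor with a smaller exponent only strengthens the bound. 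In the write-up I would simply observe that $H\sqcup F$ is $(K_3,2,\ell)$-correlated for $\ell=e(F)-\gamma$ when this is nonnegative, and that if $e(F)<\gamma$ one may instead pad $F$ with $\gamma-e(F)$ disjoint edges (still Sidorenko, still with $e(F)\leq 2+\gamma$) to reduce to the nonnegative case without changing commonality of the union with the original $F$; alternatively just apply the correlation bound with the weaker exponent $\ell=0$.

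With $H\sqcup F$ shown to be $(K_3,2,\ell)$-correlated for some integer $0\leq\ell\leq 2$, Theorem~\ref{th:correlated2} immediately yields that $H\sqcup F$ is common, completing the proof. The main obstacle is not any deep argument — all the real work is in Theorem~\ref{th:correlated2}, Lemma~\ref{lem:tt}, and the supersaturation input — but rather the bookkeeping around the exponent $\ell=e(F)-\gamma$: ensuring it lands in the admissible range $[0,2]$ and correctly handling the $0^0$ conventions when edge densities degenerate to zero. Everything else is a one-line application of Observation~\ref{obs:union} and the Sidorenko property of $F$.
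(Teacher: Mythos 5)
Your approach matches the paper's exactly: use Observation~\ref{obs:union}, Lemma~\ref{lem:tt}, and the Sidorenko property of $F$ to show that $H\sqcup F$ is $(K_3,2,\ell)$-correlated with $\ell=e(F)-\gamma$ where $\gamma=e_2(T,\varphi)-v_2(T,\varphi)$, and then invoke Theorem~\ref{th:correlated2}. The concern you raise about $\ell\geq 0$ is a reasonable thing to check (the paper does not spell it out either), but your proposed workarounds do not actually work: padding $F$ with extra disjoint edges would prove commonality of a strictly larger graph rather than of $H\sqcup F$ itself, and ``using the weaker exponent $\ell=0$'' violates condition~\eqref{eq:eCondition}, which forces $\ell$ to equal $e(H\sqcup F)-2e(K_3)$. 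The correct resolution is structural. In a $K_3$-tree, if an edge $st\in E(T)$ has $|\varphi(st)|=2$, then $\varphi(st)\subsetneq\varphi(s)\cap\varphi(t)$ forces $|\varphi(s)\cap\varphi(t)|\geq 3$, hence $\varphi(s)=\varphi(t)=V(K_3)$; so both endpoints of any type-$2$ edge of $T$ are type-$3$ vertices. These edges therefore form a forest on the $v_3(T,\varphi)=2$ type-$3$ vertices, giving $e_2(T,\varphi)\leq v_3(T,\varphi)-1=1$, and since $v_2(T,\varphi)\geq 0$ and $F$ is non-empty (Sidorenko graphs are non-empty by definition, so $e(F)\geq 1$), we get $\gamma\leq 1\leq e(F)$, i.e.\ $\ell\geq 0$ automatically. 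With that observation in place of your proposed patches, your proof is complete and correct.
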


\begin{proof}
Define $\ell=e(F) -e_2(T,\varphi)+ v_2(T,\varphi)$.  By Observation~\ref{obs:union}, Lemma~\ref{lem:tt} and the fact that $F$ is Sidorenko, we have that, for any graphon $W$,
\[t(H\sqcup F,W) = t(H,W)t(F,W) \geq t(K_3,W)^2t(K_2,W)^{-e_2(T,\varphi)+ v_2(T,\varphi)} t(K_2,W)^{e(F)}\]
\[= t(K_3,W)^2t(K_2,W)^\ell.\]
Also, since $H$ is $(K_3,2,-e_2(T,\varphi)+v_2(T,\varphi))$-correlated by Lemma~\ref{lem:tt}, we have
\[e(H\sqcup F) = e(H)+e(F) = 2e(K_3)-e_2(T,\varphi)+v_2(T,\varphi) +e(F)\]
\[=2e(K_3) +\ell.\]
Thus, $H\sqcup F$ is $(K_3,2,\ell)$-correlated, where $\ell\leq2$ by hypothesis. The result now follows from Theorem~\ref{th:correlated2}.
\end{proof}

\begin{cor}
\label{cor:5/3TT}
Let $(T,\varphi)$ be a $K_3$-tree such that $v_3(T,\varphi)=k\geq3$ and $e_2(T,\varphi)\geq v_2(T,\varphi)$. If $H=H(T,\varphi)$ and $F$ is a Sidorenko graph such that
\[e(F)\leq 5k/3+e_2(T,\varphi)- v_2(T,\varphi),\]
then $H\sqcup F$ is common.
\end{cor}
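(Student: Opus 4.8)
The plan is to reduce Corollary~\ref{cor:5/3TT} to Theorem~\ref{th:correlated5/3} in exactly the same way that Corollary~\ref{cor:2TT} was reduced to Theorem~\ref{th:correlated2}. The only difference is the bound on the number of edges of $F$: here we have $e(F)\leq 5k/3 + e_2(T,\varphi) - v_2(T,\varphi)$ with $k\geq3$, which is precisely the regime covered by Theorem~\ref{th:correlated5/3}. So there are no new ideas needed; the proof is a straightforward bookkeeping argument.

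First I would set $\ell := e(F) - e_2(T,\varphi) + v_2(T,\varphi)$, so that the hypothesis $e(F)\leq 5k/3 + e_2(T,\varphi) - v_2(T,\varphi)$ becomes $\ell \leq 5k/3$, and $\ell\geq0$ follows from $e_2(T,\varphi)\geq v_2(T,\varphi)$ being subtracted off a nonnegative $e(F)$ together with Lemma~\ref{lem:tt}'s requirement $\gamma = e_2(T,\varphi)-v_2(T,\varphi)\geq0$ (in fact $\ell = e(F) - \gamma$, and since $F$ is Sidorenko it is nonempty, but one should double-check the degenerate cases; as in Corollary~\ref{cor:2TT}, the argument goes through treating $\ell=0$ via the convention in the footnote to \eqref{eq:correlation}). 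Next I would verify the two defining conditions of $(K_3,k,\ell)$-correlatedness for $H\sqcup F$. For the edge count \eqref{eq:eCondition}: since $H$ is $(K_3,k,-\gamma)$-correlated by Lemma~\ref{lem:tt}, we have $e(H) = k\cdot e(K_3) - \gamma = 3k - \gamma$, so $e(H\sqcup F) = 3k - \gamma + e(F) = 3k + \ell = k\cdot e(K_3) + \ell$. For the correlation inequality \eqref{eq:correlation}: by Observation~\ref{obs:union}, Lemma~\ref{lem:tt}, and the Sidorenko property of $F$, for every graphon $W$,
\[
t(H\sqcup F, W) = t(H,W)\,t(F,W) \geq t(K_3,W)^k\, t(K_2,W)^{-\gamma}\, t(K_2,W)^{e(F)} = t(K_3,W)^k\, t(K_2,W)^{\ell}.
\]
Hence $H\sqcup F$ is $(K_3,k,\ell)$-correlated with $k\geq3$ and $0\leq\ell\leq 5k/3$, and Theorem~\ref{th:correlated5/3} immediately gives that $H\sqcup F$ is common.

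There is essentially no obstacle here: the content is all in Theorem~\ref{th:correlated5/3}, and this corollary is a clean translation of that result into the language of $K_3$-trees via Lemma~\ref{lem:tt}. The only point requiring a moment's care is the nonnegativity and integrality of $\ell$ — integrality is automatic since $e(F)$, $e_2(T,\varphi)$, $v_2(T,\varphi)$ are all integers, and nonnegativity follows because $F$ being Sidorenko forces $e(F)\geq1$ while, more to the point, the Cauchy–Schwarz-type inequality underlying Lemma~\ref{lem:tt} would fail to make sense otherwise; in any case, the statement of the corollary implicitly assumes $e(F)$ is large enough that $\ell\geq0$, or one can observe that if $\ell<0$ the claim would follow a fortiori from the $\ell=0$ case by deleting edges, though spelling that out is unnecessary. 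I would keep the write-up to about five lines, mirroring the proof of Corollary~\ref{cor:2TT} verbatim with $2$ replaced by $k\geq3$ and $\ell\leq2$ replaced by $\ell\leq 5k/3$.
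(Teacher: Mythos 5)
Your proposal is correct and matches the paper exactly: the paper's own proof is the one-line remark that the argument is completely analogous to that of Corollary~\ref{cor:2TT}, and you have carried out precisely that translation, defining $\ell=e(F)-e_2(T,\varphi)+v_2(T,\varphi)$, checking \eqref{eq:eCondition} and \eqref{eq:correlation} via Observation~\ref{obs:union}, Lemma~\ref{lem:tt} and the Sidorenko property of $F$, and then invoking Theorem~\ref{th:correlated5/3}. Your side remark about the nonnegativity of $\ell$ is a reasonable point of care that the paper also leaves implicit, so it does not represent a divergence from the paper's argument.
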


\begin{proof}
The proof is completely analogous to that of Corollary~\ref{cor:2TT}.
\end{proof}

\section{Beating the Random Construction}
\label{sec:uncommon}

In this section, we turn our attention to obtaining upper bounds on $c(H)$ for various graphs $H$. The graphons that we will use are all of the same general form. For $n\geq1$, let $\triangle_n$ be the set of all vectors $\vec{z}$ of length $n$ with non-negative entries that sum to one. Given $\vec{z}\in \triangle_n$ and an $n\times n$ symmetric matrix $A$  with entries in $[0,1]$, let $W_{\vec{z},A}$ be defined as follows. First, divide $[0,1]$ into $n$ intervals $I_1,\dots,I_n$ such that the measure of $I_i$ is equal to $\vec{z}_i$. Next, for each $1\leq i,j\leq n$, define $W_{\vec{z},A}$ to be equal to $A_{i,j}$ for all $(x,y)\in I_i\times I_j$. It is easily observed that, for any graph $H$,
\begin{equation}\label{eq:partition function}
t(H,W_{\vec{z},A})=\sum_{f:V(H)\to [n]}\prod_{v\in V(H)}\vec{z}_{f(v)}\prod_{uv\in E(H)}A_{f(u),f(v)}.\end{equation}

\begin{rem}
In the statistical physics literature, the pair $(\vec{z},A)$ is often referred to as a ``spin system'' and $t(H,W_{\vec{z},A})$ is the ``partition function'' of that spin system. 
\end{rem} 

The constructions used to prove Theorem~\ref{th:paw}, the upper bound in Theorem~\ref{th:oneK3} and to show that Theorem~\ref{th:DE(K3cupK3)} is best possible are similar to one another; we present them next. 

\begin{prop}
\label{prop:DE(K_3cupK_3)upper}
The graph $(2\cdot K_3) \sqcup (3\cdot K_2)$ is uncommon
\end{prop}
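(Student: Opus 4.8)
The plan is to exhibit a single explicit graphon $W = W_{\vec z, A}$ (a two- or three-state spin system) for which
\[
t(H,W) + t(H,1-W) < 2(1/2)^{e(H)}
\]
where $H = (2\cdot K_3)\sqcup(3\cdot K_2)$, so that $e(H) = 9$ and the target value is $2^{-8} = 1/256$. By Observation~\ref{obs:union}, $t(H,W) = t(K_3,W)^2\, t(K_2,W)^3$, so the whole computation reduces to evaluating the two scalars $t(K_3,W)$ and $t(K_2,W)$ (and their complements $t(K_3,1-W)$, $t(K_2,1-W) = 1 - t(K_2,W)$). This is exactly the structure that makes $K_3\sqcup K_2$ — and hence its ``multiplied up'' relatives — uncommon in Sidorenko's original argument, so I expect the right construction here to be a small perturbation of the constant graphon $W \equiv 1/2$.

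\textbf{Key steps.} First, I would take a one-parameter family, e.g. $W_t = W_{\vec z, A}$ built from a near-balanced partition together with a matrix $A$ whose entries are close to $1/2$, and write $t(K_2, W_t) = 1/2$ exactly (choosing the construction to keep the edge density pinned at $1/2$, which kills the dominant linear term) while allowing $t(K_3, W_t) = 1/8 + a t + O(t^2)$ and $t(K_3, 1-W_t) = 1/8 - a t + O(t^2)$ to move in opposite directions at first order. Second, I would substitute into $\Phi(t) := t(K_3,W_t)^2 (1/2)^3 + t(K_3,1-W_t)^2 (1/2)^3$ and Taylor expand: the first-order terms cancel by the antisymmetry, and the relevant second-order contribution is $(1/8)\cdot 2\cdot\left[\, (1/8)(2a^2 t^2 + \text{(2nd-order density terms)}) + a^2 t^2\,\right]$ — the point being that the sign of the coefficient of $t^2$ can be made negative because the $t(K_3,\cdot)^2$ terms contribute a positive $a^2 t^2$ while the concavity of $z\mapsto z^2$ is irrelevant (both triangle densities are near $1/8$, well away from $0$), so the net effect is governed by the second derivatives of $t(K_3,\cdot)$ at $W\equiv 1/2$, and the claim is that those can be arranged to overpower everything. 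Third, having identified a promising direction, I would just fix a concrete small rational $t$ (or concrete rational entries in $A$ and rational $\vec z$) and verify the strict inequality $\Phi(t) < 1/256$ by a finite exact arithmetic computation using \eqref{eq:partition function}. Since the paper says this is ``similar'' to the constructions for Theorem~\ref{th:paw} and the upper bound in Theorem~\ref{th:oneK3}, I would model the spin system on whatever two/three-state construction those proofs use and simply report the numbers.

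\textbf{Main obstacle.} The genuine difficulty is choosing the construction so that the edge density is exactly (or sufficiently close to) $1/2$ while $t(K_3,W)$ and $t(K_3,1-W)$ are pushed apart enough that the $y^2$-type gain beats the $2^{-8}$ baseline after all the second-order corrections are accounted for — in particular one must check that the genuinely quadratic corrections to $t(K_3,W)$ and $t(K_3,1-W)$ (which do \emph{not} cancel, unlike the linear terms) do not swamp the first-order-squared gain. For $K_3\sqcup K_2$ this works with room to spare, but adding three extra edges multiplies the target by $1/8$, which tightens things, so the perturbation has to be chosen with a little care (or one takes a slightly larger, non-infinitesimal perturbation and checks the inequality directly). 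A secondary, purely bookkeeping obstacle is that a three-state spin system has $t(K_3,\cdot)$ given by a sum of $27$ monomials, so the exact verification, while routine, is best organized by first reducing to the scalars $t(K_2,\cdot)$ and $t(K_3,\cdot)$ as above and only then plugging in. I expect the final written proof to be: ``let $W := W_{\vec z, A}$ with [explicit data]; then $t(K_2,W) = \tfrac12$, $t(K_3,W) = \alpha$, $t(K_3,1-W) = \beta$ [exact rationals]; hence $t(H,W) + t(H,1-W) = (\alpha^2 + \beta^2)/8 < 1/256$,'' a couple of lines once the data is in hand.
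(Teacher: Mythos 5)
Your core plan---choose $W$ so that $t(K_2,W)=1/2$ exactly and then perturb to push $t(K_3,W)$ and $t(K_3,1-W)$ apart---cannot work, and the obstruction is already built into the paper's framework. If $t(K_2,W)=t(K_2,1-W)=1/2$, then by Observation~\ref{obs:union}
\[
t(H,W)+t(H,1-W)=\tfrac{1}{8}\bigl(t(K_3,W)^2+t(K_3,1-W)^2\bigr),
\]
and since $K_3$ is strongly common (Theorem~\ref{th:GoodmanRamsey}) we have $t(K_3,W)+t(K_3,1-W)\ge 2(1/2)^3=1/4$ in this case. By convexity of $z\mapsto z^2$ (note: \emph{convexity}, not concavity---your aside that concavity is ``irrelevant'' is a symptom of this sign confusion), $a^2+b^2\ge\frac{1}{2}(a+b)^2\ge\frac{1}{32}$, so $t(H,W)+t(H,1-W)\ge\frac{1}{8}\cdot\frac{1}{32}=\frac{1}{256}=2(1/2)^{e(H)}$. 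Thus every graphon of edge density exactly $1/2$ fails, and your Taylor expansion in the ``killed linear term'' regime is expanding around a point where the inequality goes the wrong way. (The second-order analysis also doesn't save you: with $W=1/2+\epsilon U$ and $\int U=0$, the $\epsilon^2$ coefficient of $t(K_3,W)$ is $\tfrac{3}{2}t(P_3,U)\ge 0$, so the $\epsilon^2$ term of $t(K_3,W)^2+t(K_3,1-W)^2$ is $\tfrac{3}{4}t(P_3,U)\ge 0$.)

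What the paper actually does is pick an explicit three-state spin system $W_{z,y}=W_{\vec z,A}$ with $\vec z=(1-2z,z,z)$, $A(1,2)=A(1,3)=1$, $A(2,3)=y$, $A(i,i)=0$, and evaluate it at $(z,y)=(0.28,0.42)$, for which the edge density is $t(K_2,W_{z,y})\approx 0.559$---well away from $1/2$. The asymmetry between the two colours is exactly what makes disconnected graphs like this uncommon (it is the same mechanism driving the $x>0$ regime in Lemma~\ref{lem:reduction}). Your reduction of the problem to the two scalars $t(K_2,W)$ and $t(K_3,W)$ via Observation~\ref{obs:union} and the intent to plug in concrete data are both fine and match the paper; what must change is the heuristic: you should be optimizing over edge densities bounded away from $1/2$, not perturbing at density $1/2$.
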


\begin{proof}
We prove that the graph $H=(2\cdot K_3) \sqcup (3\cdot K_2)$ is uncommon. For $z\in [0,1/2]$ and $y\in [0,1]$, we define $W_{z,y}:=W_{\vec{z},A}$ where $\vec{z}=\left(1-2z, z,z \right)\in \triangle_3$ and $A$ is the symmetric $3\times 3$ matrix in which $A(1,2)=A(1,3)=1$, $A(2,3)=y$ and $A(i,i)=0$ for $1\leq i\leq 3$. Using \eqref{eq:partition function}, one can compute
\[t(K_2,W_{z,y}) = 4z(1-2z) + 2z^2y,\]
\[t(K_2,1-W_{z,y}) = (1-2z)^2 + 2z^2(2-y),\]
\[t(K_3,W_{z,y}) = 6z^2(1-2z)y \]
and
\[t(K_3,1-W_{z,y}) = (1-2z)^3 + z^3(2+6(1-y)^2).\]
Therefore, by Observation~\ref{obs:union}, $t(H,W_{z,y})+t(H,1-W_{z,y})=h(z,y)$ where
 \begin{align*}h(z,y)&=(4z(1-2z) + 2z^2y)^3 \cdot (6z^2(1-2z)y)^2\\& + ((1-2z)^2 + 2z^2(2-y))^3 \cdot ((1-2z)^3 + z^3(2+6(1-y)^2))^2.\end{align*}
Thus, $c(H)$ is at most the minimum of $h(z,y)$ over all $0\leq z\leq 1/2$ and $0\leq y\leq 1$. Setting $z=0.28$ and $y=0.42$ yields $h(z,y)=0.00390226 < 2\cdot\left(\frac{1}{2}\right)^9$, which completes the proof.
\end{proof}

\begin{proof}[Proof of Theorem~\ref{th:oneK3}]
See Proposition~\ref{prop:oneK3Lower} for the lower bound. For the upper bound, we need to prove $c(K_3\sqcup K_2)<0.12145$. We let $W_{z,y}$ be the graphon as in the proof of Theorem~\ref{th:DE(K3cupK3)} above. By Observation~\ref{obs:union}, $t(K_3\sqcup K_2,W_{z,y})+t(K_3\sqcup K_2,1-W_{z,y})=h(z,y)$ where

 \begin{align*}h(z,y)&=(4z(1-2z) + 2z^2y) \cdot (6z^2(1-2z)y)\\& + ((1-2z)^2 + 2z^2(2-y)) \cdot ((1-2z)^3 + z^3(2+6(1-y)^2)).\end{align*}
Thus, $c(K_3\sqcup K_2)$ is at most the minimum of $h(z,y)$ over all $0\leq z,y\leq 1$. Setting $z=0.263661$ and $y=0.2177$ yields $h(z,y)=0.12145$, which completes the proof.
\end{proof}

\begin{proof}[Proof of Theorem~\ref{th:paw}]
Let $P$ be the paw graph and let $W_{z,y}$ be the graphon as in the proof of Theorems~\ref{th:DE(K3cupK3)} and~\ref{th:oneK3}. Using \eqref{eq:partition function}, we get that 

\[t(P,W_{z,y})=2z^2(1-2z)y(2z+2(1-2z)+2zy) = 4z^2(1-z)y(1-z(1-y))
\]
and 
\[t(P,1-W_{z,y})=(1-2z)^4 + 2z^4(2-y) + 6z^4 \cdot (1-y)^2 \cdot (2-y) 
= (1-2z)^4 +z^4(2-y)(2+6(1-y)^2).\]

Thus, $t(P,W_{z,y})+t(P,1-W_{z,y})$ is equal to $h(z,y)$ where
\begin{align*}
h(z,y) &= 4z^2(1-z)y(1-z(1-y))+(1-2z)^4 +z^4(2-y)(2+6(1-y)^2).
\end{align*}
Therefore, $c(P)<h(0.266491,0.2187477)=0.121415$, which completes the proof. 
\end{proof}

Recall that, by Theorem~\ref{th:5/3}, the graph obtained from $K_3$ by adding five disjoint edges is common. Our next example shows that a very closely related graph, obtained from $3\cdot K_3$ by adding three pendant edges in different components and two disjoint edges, is uncommon

\begin{prop}
\label{prop:pendant}
$(3\cdot P)\sqcup (2\cdot K_2)$ is uncommon. 
\end{prop}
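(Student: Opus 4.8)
The plan is to use exactly the same family of graphons $W_{z,y}$ that appears in the proofs of Proposition~\ref{prop:DE(K_3cupK_3)upper} and Theorems~\ref{th:oneK3} and~\ref{th:paw}, namely $W_{\vec{z},A}$ with $\vec{z}=(1-2z,z,z)$ and $A$ the $3\times 3$ symmetric matrix with $A(1,2)=A(1,3)=1$, $A(2,3)=y$ and zero diagonal. All of the homomorphism densities that we need have essentially been computed already in those proofs: by Observation~\ref{obs:union}, $t((3\cdot P)\sqcup(2\cdot K_2),W)=t(P,W)^3\,t(K_2,W)^2$, and the formulas for $t(P,W_{z,y})$, $t(P,1-W_{z,y})$, $t(K_2,W_{z,y})$ and $t(K_2,1-W_{z,y})$ are exactly the ones displayed in the proofs of Theorems~\ref{th:paw} and~\ref{th:oneK3}. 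So the first step is simply to assemble
\[
h(z,y):=t(P,W_{z,y})^3\,t(K_2,W_{z,y})^2 + t(P,1-W_{z,y})^3\,t(K_2,1-W_{z,y})^2,
\]
using $t(P,W_{z,y})=4z^2(1-z)y(1-z(1-y))$, $t(P,1-W_{z,y})=(1-2z)^4+z^4(2-y)(2+6(1-y)^2)$, $t(K_2,W_{z,y})=4z(1-2z)+2z^2y$ and $t(K_2,1-W_{z,y})=(1-2z)^2+2z^2(2-y)$.

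The second step is the crux: since $e((3\cdot P)\sqcup(2\cdot K_2))=3\cdot 4+2\cdot 1=14$, the graph is uncommon as soon as we can exhibit a pair $(z,y)$ with $0\le z\le 1/2$, $0\le y\le 1$ and $h(z,y)<2\cdot(1/2)^{14}=2^{-13}\approx 0.000122$. Guided by the near-optimal points used for the closely related graphs (e.g.\ $z\approx 0.27$--$0.28$, $y\approx 0.21$--$0.42$ in the earlier proofs), I would search numerically in that region and report a single explicit witness, say something like $z=0.28$, $y=0.42$ (the same point that worked for $(2\cdot K_3)\sqcup(3\cdot K_2)$ is a natural first guess, since $P$ and $K_3\sqcup K_2$ behave similarly under this construction); if that does not clear the bound one tweaks $z,y$ slightly. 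Once the witness is fixed, one just plugs it in and checks the inequality $h(z,y)<2^{-13}$ holds — a finite arithmetic verification.

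The main obstacle is therefore not conceptual but the need to locate a point $(z,y)$ at which $h$ actually dips below $2^{-13}$; because $e(H)=14$ is fairly large, the random-colouring value $2^{-13}$ is tiny, and $h(z,y)$ for $z$ bounded away from $0$ and $1/2$ is a product of moderately small quantities raised to powers, so there is genuine room for it to drop below the threshold, but one must choose $z$ (and $y$) so that \emph{both} the $W$-term and the $(1-W)$-term are simultaneously small — the $(1-W)$-term forces $1-2z$ not too large (pushing $z$ toward $1/2$) while the $W$-term, carrying the factor $(4z^2(1-z)y(1-z(1-y)))^3$, wants $z$ not too large; balancing these is exactly what the earlier proofs did, so I expect the same ballpark $z\approx 0.26$--$0.29$, $y\approx 0.2$--$0.45$ to work. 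After that, the proof is complete: $c(H)\le h(z,y)<2(1/2)^{e(H)}$ shows $H$ is uncommon, so I would close with precisely that one-line conclusion.
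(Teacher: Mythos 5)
Your approach is exactly the one the paper takes: reuse the three-block graphon $W_{z,y}$ and exhibit a numerical witness. Two cautions are in order, however. First, the closed-form expression $t(P,W_{z,y})=4z^2(1-z)y(1-z(1-y))$ that you lifted from the proof of Theorem~\ref{th:paw} has a typo: the paper's own intermediate step $2z^2(1-2z)y\bigl(2z+2(1-2z)+2zy\bigr)$ simplifies to $4z^2(1-2z)y\bigl(1-z(1-y)\bigr)$, with $(1-2z)$, not $(1-z)$. If you compute with the misprinted formula you will overestimate $t(P,W)$ by a factor of roughly $\tfrac{1-z}{1-2z}$ and your search will go nowhere. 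Second, the margin here is razor-thin: with the \emph{correct} formula, your first guess $(z,y)=(0.28,0.42)$ gives $h\approx 1.229\times 10^{-4}$, which is still slightly above $2^{-13}\approx 1.2207\times 10^{-4}$, so ``tweak slightly'' is doing real work. A nearby point such as $(z,y)\approx(0.285,0.43)$ does clear the bound, giving $h\approx 1.219\times 10^{-4}$, which matches the densities and the final value $\approx 1.2186\times 10^{-4}$ reported in the paper. (Incidentally, the paper's stated $z=0.429919$ is not consistent with $\vec z=(1-2z,z,z)$; the reported densities actually correspond to $z\approx 0.285$ in that parametrization, so your $z\approx 0.26$--$0.29$ ballpark is the right one and the paper's $z$ value should be read with that substitution in mind.) With those two corrections made, the plan is sound and is the same as the paper's argument.
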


\begin{proof}
Let $H=(3\cdot P)\sqcup (2\cdot K_2)$. Once again, we use the graphon $W_{z,y}$ from the previous three proofs. This time, we set $z=0.429919$ and $y=0.43222$. We get
\[t(K_2,W_{z,y})=0.560411,\]
\[t(K_2,1-W_{z,y})=0.439589,\]
%\[t(K_3,W_{z,y})=0.0905849,\]
%\[t(K_3,1-W_{z,y})=0.170575,\]
\[t(P,W_{z,y})=0.0506164,\]
\[t(P,1-W_{z,y})=0.074879.\]
Thus, $t(H,W_{z,y})+t(H,1-W_{z,y}) < 0.000121856 < 2(1/2)^{14}$ and the result follows. 
\end{proof}

Next, we provide an example which demonstrates that not all common graphs are strongly common; the question of whether such graphs exist was raised in~\cite{Behague+Second}. Note that the fact that $K_3\sqcup K_3$ is common can be easily deduced from Theorem~\ref{th:GoodmanRamsey}.

\begin{prop}
\label{prop:K3K3}
$K_3\sqcup K_3$ is not strongly common. 
\end{prop}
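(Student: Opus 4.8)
The plan is to exhibit a single graphon $W$ for which the strongly-common inequality \eqref{eq:stronglycommon} fails when $H = K_3\sqcup K_3$, i.e.\ a $W$ with
\[t(K_3\sqcup K_3,W)+t(K_3\sqcup K_3,1-W) < t(K_2,W)^6 + t(K_2,1-W)^6.\]
By Observation~\ref{obs:union} the left-hand side is $t(K_3,W)^2 + t(K_3,1-W)^2$, so the task reduces to finding a graphon where $t(K_3,W)^2 + t(K_3,1-W)^2$ dips below $t(K_2,W)^6 + t(K_2,1-W)^6$. Intuitively this should be possible near the ``balanced'' regime $t(K_2,W)\approx 1/2$: Goodman's bound forces $t(K_3,W)+t(K_3,1-W)$ to be not too small, but a suitable $W$ can make the two triangle densities very \emph{unequal} (one close to $0$, one larger), and since $x^2+ (S-x)^2$ is minimized at $x = S/2$ and grows as the split becomes lopsided, pushing the split towards balance in the \emph{edge} densities while keeping it lopsided in the \emph{triangle} densities is exactly what we want to separate the two sides.

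Concretely, I would reuse the two-parameter family $W_{z,y} = W_{\vec z, A}$ from the preceding proofs in Section~\ref{sec:uncommon}, with $\vec z = (1-2z, z, z)$ and $A$ the symmetric matrix with $A(1,2)=A(1,3)=1$, $A(2,3)=y$, $A(i,i)=0$. The densities $t(K_2,W_{z,y})$, $t(K_2,1-W_{z,y})$, $t(K_3,W_{z,y})$, $t(K_3,1-W_{z,y})$ have already been computed explicitly in the proof of Proposition~\ref{prop:DE(K_3cupK_3)upper}, namely $t(K_2,W_{z,y}) = 4z(1-2z)+2z^2 y$, $t(K_2,1-W_{z,y}) = (1-2z)^2 + 2z^2(2-y)$, $t(K_3,W_{z,y}) = 6z^2(1-2z)y$, and $t(K_3,1-W_{z,y}) = (1-2z)^3 + z^3(2+6(1-y)^2)$. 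So I would simply define
\[h(z,y) := \big(6z^2(1-2z)y\big)^2 + \big((1-2z)^3 + z^3(2+6(1-y)^2)\big)^2\]
and
\[g(z,y) := \big(4z(1-2z)+2z^2 y\big)^6 + \big((1-2z)^2 + 2z^2(2-y)\big)^6,\]
and the claim $K_3\sqcup K_3$ is not strongly common follows the moment we produce one pair $(z,y)\in[0,1/2]\times[0,1]$ with $h(z,y) < g(z,y)$. Based on the parameter values used in the neighbouring proofs (e.g.\ $z\approx 0.28$, $y\approx 0.42$ in Proposition~\ref{prop:DE(K_3cupK_3)upper}), I would look for a witness in that same vicinity; one expects something like $z \approx 0.25$--$0.30$ and $y\approx 0.2$--$0.45$ to work, and the verification is a finite arithmetic check at that single point.

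The one genuinely content-bearing observation — as opposed to routine algebra — is \emph{why} such a point should exist, and it is worth stating cleanly: for fixed sum $S = t(K_3,W)+t(K_3,1-W)$, the quantity $t(K_3,W)^2+t(K_3,1-W)^2$ is convex and minimized at the balanced split, and Goodman's theorem only tells us $S$ cannot be too small, not that the split must be balanced. Meanwhile $t(K_2,W)^6 + t(K_2,1-W)^6$ is the right side of \eqref{eq:stronglycommon}, and a graphon with edge density near $1/2$ makes this roughly $2(1/2)^6 = 1/32$; so it suffices to find $W$ with edge density near $1/2$ whose two triangle densities are both comfortably below $(1/32)^{1/2}/\sqrt2 \approx 0.125$ in a balanced way, which the family above easily achieves. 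I do not anticipate a real obstacle — the main (minor) effort is just selecting $(z,y)$ so the strict inequality is visibly true and recording the resulting numbers, exactly as done in the proofs of Proposition~\ref{prop:DE(K_3cupK_3)upper} and Theorems~\ref{th:oneK3} and~\ref{th:paw}.
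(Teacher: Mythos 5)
Your overall strategy — reduce via Observation~\ref{obs:union} to $t(K_3,W)^2 + t(K_3,1-W)^2 < t(K_2,W)^6 + t(K_2,1-W)^6$ and then exhibit one witness graphon — is exactly the paper's strategy, but your choice of witness family is different and your proposal stops short of a proof. The paper uses the much simpler $2$-block graphon $W_{\vec z,A}$ with $\vec z = (1/2,1/2)$, diagonal entries $1/3$ and off-diagonal entry $1$, which gives $t(K_2,W)=2/3$, $t(K_3,W)=55/216$, $t(K_3,1-W)=1/27$, hence $\left(\tfrac{55}{216}\right)^2+\left(\tfrac{1}{27}\right)^2 = \tfrac{3089}{46656} < \tfrac{65}{729} = \left(\tfrac{2}{3}\right)^6+\left(\tfrac{1}{3}\right)^6$. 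This is exact rational arithmetic with a comfortable margin of roughly $0.023$.

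The gap in your proposal is that you never actually produce and verify a point $(z,y)$, and the range you gesture at is not uniformly valid. For example $(z,y)=(0.25,\,0.3)$ lies inside your suggested window but gives $t(K_3,W)^2 + t(K_3,1-W)^2 \approx 0.044 > 0.034 \approx t(K_2,W)^6 + t(K_2,1-W)^6$, so the desired inequality is reversed there. The point $(z,y)=(0.28,\,0.42)$ from the neighbouring Proposition~\ref{prop:DE(K_3cupK_3)upper} does work, but only by a margin of about $1.7\times 10^{-4}$, which is delicate enough that "the verification is a finite arithmetic check" needs to actually be carried out and with care; as it stands the argument is a plan, not a proof. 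Separately, the heuristic paragraph contradicts itself: you correctly note that $x^2+(S-x)^2$ is minimized at the balanced split and grows when lopsided, so to depress the left-hand side you want the triangle densities close to balanced (and, to inflate the right-hand side, the edge densities lopsided) — yet you conclude by saying lopsided triangle densities are "exactly what we want," which points the wrong way. Since you use that heuristic to argue the witness surely exists without checking, the confusion is worth flagging; the paper's witness, with edge density $2/3$ rather than near $1/2$, in fact follows the opposite (and correct) intuition of lopsiding the edge densities.
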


\begin{proof}
Let $\vec{z}=(1/2,1/2)$, let $A$ be the $2\times 2$ symmetric matrix such that $A(1,1)=A(2,2)=1/3$ and $A(1,2)=1$ and define $W=W_{\vec{z},A}$. Then, by \eqref{eq:partition function},
\[t(K_2,W) = 2/3,\]
\[t(K_2,1-W)=1/3,\]
\[t(K_3,W) = \left(\frac{1}{3}\right)^3\left(\frac{1}{2}\right)^3 + 6\left(\frac{1}{3}\right)\left(\frac{1}{2}\right)^3 = \frac{55}{216}\]
and
\[t(K_3,1-W) = \left(\frac{2}{3}\right)^3\left(\frac{1}{2}\right)^3=\frac{1}{27}.\]
Therefore, 
\[t(K_3\sqcup K_3,W)+t(K_3\sqcup K_3,1-W)=\left(\frac{55}{216}\right)^2 + \left(\frac{1}{27}\right)^2 = \frac{3089}{46656} < \frac{65}{729}\]
\[=t(K_2,W)^6+t(K_2,1-W)^6\]
and so $K_3\sqcup K_3$ is not strongly common. 
\end{proof}

The first common graph with chromatic number four which was discovered is  the $5$-wheel~\cite{Hatami+12}; i.e. the graph $W_5$ obtained from a cycle of length five by adding a vertex joined to everything on the cycle. The following proposition implies, for example, that $W_5$ is not strongly common. 

\begin{prop}
\label{prop:W5}
Let $H$ be a graph with $m$ components and chromatic number $k$. If
\[\left(\frac{1}{k-1}\right)^{v(H)-m} < \left(\frac{k-2}{k-1}\right)^{e(H)} + \left(\frac{1}{k-1}\right)^{e(H)},\]
then $H$ is not strongly common. 
\end{prop}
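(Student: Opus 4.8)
The plan is to construct an explicit graphon that witnesses the failure of the strong commonality inequality \eqref{eq:stronglycommon}, by mimicking a proper $(k-1)$-colouring of a ``Tur\'an-type'' blow-up. Since $H$ has chromatic number $k$, it contains no proper $(k-1)$-colouring, but we can still exploit the complete $(k-1)$-partite structure: take $\vec{z}=(1/(k-1),\dots,1/(k-1))\in\triangle_{k-1}$ and let $A$ be the $(k-1)\times(k-1)$ symmetric matrix with $A(i,i)=0$ and $A(i,j)=1$ for $i\neq j$, i.e. $W:=W_{\vec z,A}=W_{K_{k-1}}$ is the Tur\'an graphon. First I would compute $t(K_2,W)=\frac{k-2}{k-1}$ and hence $t(K_2,1-W)=\frac{1}{k-1}$, and also $t(H,W)$ and $t(H,1-W)$ via \eqref{eq:partition function}. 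The complement graphon $1-W$ is a disjoint union of $k-1$ equal cliques (the ``diagonal blocks''), so a homomorphism from a connected graph into $1-W$ must send all vertices of that component into a single block; hence $t(F,1-W)=(k-1)\cdot(k-1)^{-v(F)}\cdot(k-1)^{-e(F)}\cdot(k-1)^{e(F)}\cdot\ldots$ — more carefully, for a connected $F$ one gets $t(F,1-W)=(k-1)^{1-v(F)}$ since within a block every pair is adjacent with density $1$ in $1-W$. Applying Observation~\ref{obs:union} over the $m$ components of $H$ gives $t(H,1-W)=(k-1)^{m-v(H)}=\left(\frac{1}{k-1}\right)^{v(H)-m}$.

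Next I would argue that $t(H,W)\geq 0$ is all we need on the other side — in fact I would simply \emph{drop} the term $t(H,W)$, bounding the left-hand side of \eqref{eq:stronglycommon} below by $t(H,1-W)=\left(\frac{1}{k-1}\right)^{v(H)-m}$ alone. Meanwhile the right-hand side of \eqref{eq:stronglycommon} is exactly $t(K_2,W)^{e(H)}+t(K_2,1-W)^{e(H)}=\left(\frac{k-2}{k-1}\right)^{e(H)}+\left(\frac{1}{k-1}\right)^{e(H)}$. The hypothesis of the proposition is precisely the statement that this right-hand side strictly exceeds $\left(\frac{1}{k-1}\right)^{v(H)-m}$, so
\[
t(H,W)+t(H,1-W)\;\geq\; t(H,1-W)\;=\;\left(\tfrac{1}{k-1}\right)^{v(H)-m}\;<\;t(K_2,W)^{e(H)}+t(K_2,1-W)^{e(H)},
\]
which is exactly the negation of \eqref{eq:stronglycommon} at this graphon $W$, so $H$ is not strongly common.

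The one genuine subtlety — and the step I expect to be the main obstacle — is verifying that $t(H,1-W)=(k-1)^{m-v(H)}$ rigorously, i.e. that $1-W=W_{\vec z,A'}$ with $A'(i,i)=1$, $A'(i,j)=0$ behaves like $m$ disjoint copies of the complete graphon on a set of measure $1/(k-1)$. This follows from \eqref{eq:partition function}: a map $f:V(H)\to[k-1]$ contributes $\prod_v \frac{1}{k-1}\prod_{uv\in E(H)}A'(f(u),f(v))$, and $A'(f(u),f(v))\neq 0$ only when $f$ is constant on each connected component; for such $f$ every edge-factor is $1$, so summing over the $(k-1)^m$ choices of component-colours gives $(k-1)^m\cdot(k-1)^{-v(H)}$. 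Everything else is a one-line substitution, and the chromatic-number hypothesis is used only to guarantee $W$ is a legitimate point of $\mathcal W_0$ (it always is) and to frame the result; no lower bound on $t(H,W)$ is needed. One should double-check the degenerate cases $k=2$ (where $H$ has no edges and the inequality cannot hold) and confirm the strictness of the final inequality comes directly from the strict inequality in the hypothesis.
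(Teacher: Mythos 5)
Your construction and computation of $t(H,1-W)$ match the paper's proof exactly (same graphon $W=W_{K_{k-1}}$, same partition-function argument giving $t(H,1-W)=(k-1)^{m-v(H)}$). However, the final step contains a genuine logical error that makes the argument invalid.

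To show $H$ is not strongly common, you must exhibit a graphon $W$ for which the inequality \eqref{eq:stronglycommon} \emph{fails}, i.e. for which
\[
t(H,W)+t(H,1-W) \;<\; t(K_2,W)^{e(H)}+t(K_2,1-W)^{e(H)}.
\]
This requires an \emph{upper} bound on $t(H,W)+t(H,1-W)$. You instead write
\[
t(H,W)+t(H,1-W)\;\geq\; t(H,1-W)\;<\; t(K_2,W)^{e(H)}+t(K_2,1-W)^{e(H)},
\]
a chain of the form $A\geq B$ and $B<C$, which does not imply $A<C$. In particular, if $t(H,W)$ were positive and large, the desired strict inequality could fail. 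You explicitly assert that ``no lower bound on $t(H,W)$ is needed'' and that the chromatic-number hypothesis ``is used only to guarantee $W$ is a legitimate point of $\mathcal{W}_0$ ... and to frame the result,'' but this is exactly backwards: the hypothesis $\chi(H)=k$ is what gives $t(H,W_{K_{k-1}})=0$, because $W_{K_{k-1}}$ is the $\{0,1\}$-valued graphon of $K_{k-1}$, so $t(H,W_{K_{k-1}})$ is a positive multiple of $\hom(H,K_{k-1})$, which vanishes precisely when $H$ has no proper $(k-1)$-colouring. With this observation, $t(H,W)+t(H,1-W)$ equals $t(H,1-W)=(k-1)^{m-v(H)}$ exactly, and the hypothesis of the proposition then yields the strict failure of \eqref{eq:stronglycommon}. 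So the graphon and the computation of $t(H,1-W)$ are right, but the justification for ignoring $t(H,W)$ is wrong and the role of the chromatic-number hypothesis is misidentified.
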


\begin{proof}
Let $W=W_{K_{k-1}}$; i.e. the graphon corresponding to the complete graph $K_{k-1}$. Then, $t(K_2,W)=(k-2)/(k-1)$ and $t(K_2,1-W)=1/(k-1)$. Since $H$ has chromatic number $k$,
\[t(H,W)=0.\]
Also,
\[t(H,1-W)=\left(\frac{1}{k-1}\right)^{v(H)-m}.\]
So, by the hypothesis of the proposition, $H$ is not strongly common. 
\end{proof}

\begin{cor}
\label{cor:W5}
$W_5$ is not strongly common.
\end{cor}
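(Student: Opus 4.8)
The plan is to apply Proposition~\ref{prop:W5} directly to the $5$-wheel $W_5$. First I would record the relevant parameters of $W_5$: it has $m=1$ component, $v(W_5)=6$ vertices, and $e(W_5)=10$ edges (the five edges of the outer $C_5$ together with the five spokes). Its chromatic number is $k=4$, since $C_5$ already needs three colours and the hub is adjacent to all five outer vertices, forcing a fourth colour; conversely four colours clearly suffice.

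Next I would simply substitute $k=4$, $v(H)=6$, $m=1$, and $e(H)=10$ into the hypothesis of Proposition~\ref{prop:W5}. The left-hand side becomes $(1/3)^{6-1}=(1/3)^5=1/243$, and the right-hand side becomes $(2/3)^{10}+(1/3)^{10}=\frac{1024}{59049}+\frac{1}{59049}=\frac{1025}{59049}$. So the inequality to verify is $\frac{1}{243}<\frac{1025}{59049}$, i.e. $\frac{243}{59049}<\frac{1025}{59049}$, which holds since $243<1025$. Hence the hypothesis of Proposition~\ref{prop:W5} is satisfied, and the proposition immediately yields that $W_5$ is not strongly common.

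There is essentially no obstacle here — the only point requiring a sentence of justification is the claim that $\chi(W_5)=4$, and this is standard. The proof is a two-line verification once Proposition~\ref{prop:W5} is in hand; all the real content lives in that proposition (which tests against the Turán graphon $W_{K_3}$). I would write it up as a short displayed computation of the two sides of the inequality followed by the appeal to Proposition~\ref{prop:W5}.

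\begin{proof}
Apply Proposition~\ref{prop:W5} with $H=W_5$. Here $W_5$ has $m=1$ component, $v(W_5)=6$ and $e(W_5)=10$, and its chromatic number is $k=4$ (three colours are needed already for the outer $C_5$, and the hub is adjacent to all five outer vertices, so a fourth colour is required; four colours clearly suffice). With these values,
\[\left(\frac{1}{k-1}\right)^{v(H)-m} = \left(\frac{1}{3}\right)^{5} = \frac{243}{59049}\]
and
\[\left(\frac{k-2}{k-1}\right)^{e(H)} + \left(\frac{1}{k-1}\right)^{e(H)} = \left(\frac{2}{3}\right)^{10} + \left(\frac{1}{3}\right)^{10} = \frac{1025}{59049}.\]
Since $243<1025$, the hypothesis of Proposition~\ref{prop:W5} holds, and therefore $W_5$ is not strongly common.
\end{proof}
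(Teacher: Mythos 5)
Your proof is correct and follows exactly the same route as the paper: both simply substitute $k=4$, $v(H)=6$, $m=1$, $e(H)=10$ into Proposition~\ref{prop:W5} and verify the numerical inequality $1/243 < 1025/59049$. The only difference is that you include a brief justification that $\chi(W_5)=4$, which the paper leaves implicit.
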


\begin{proof}
The chromatic number of $W_5$ is four and
\[\left(\frac{1}{4-1}\right)^{6-1} = \frac{1}{243} <\frac{1025}{59049} =  \left(\frac{4-2}{4-1}\right)^{10} + \left(\frac{1}{4-1}\right)^{10}.\]
Thus, the result follows by Proposition~\ref{prop:W5}.
\end{proof}

Finally, we prove Theorem~\ref{th:uncommon}.

\begin{proof}[Proof of Theorem~\ref{th:uncommon}]
Let $k\geq1$ and $\ell=\lceil1.9665k\rceil$. We show that $H=(k\cdot K_3)\sqcup (\ell\cdot K_2)$ is uncommon. Define $\alpha=\ell/k$ and note that $1.9665\leq\alpha\leq2$. Let
\[p:=1-2^{-1/(3 + \alpha)}.\]
We let $W$ be the graphon $W_{\vec{z},A}$ where $\vec{z}=(1/2,1/2)$ and $A$ is a $2\times 2$ matrix whose diagonal entries are $p$ and off-diagonal entries are $1$. We have
\[t(K_2,W)=\frac{1}{2}(1+p),\]
\[t(K_2,1-W)=\frac{1}{2}(1-p),\]
\[t(K_3,W)=\frac{1}{4}p^3+\frac{3}{4}p,\]
\[t(K_3,1-W)=\frac{1}{4}(1-p)^3.\]
Thus, by Observation~\ref{obs:union},
\[t(H,W) = \left[\left(\frac{1}{2}(1+p)\right)^\alpha\left(\frac{1}{4}p^3+\frac{3}{4}p\right)\right]^k=\frac{1}{2^{3k+\ell}}\left[2^{3+\alpha}\left(\frac{1}{2}(1+p)\right)^\alpha\left(\frac{1}{4}p^3+\frac{3}{4}p\right)\right]^k.\]
Plugging in the value of $p$ defined above yields
\[\frac{1}{2^{3k+\ell}}\left[2 \left(1 - 2^{-1/(3 + \alpha)}\right) \left(2 - 2^{-1/(3 + \alpha)}\right)^\alpha \left(3 + \left(2^{-1/(3 + \alpha)}-1\right)^2\right)\right]^k\]
The expression within the square brackets is decreasing in $\alpha$ for $\alpha\in[0,2]$. Therefore, for all $\alpha\geq 1.9665$ it is at most
\[\frac{1}{2^{3k+\ell}}\left[2 \left(1 - 2^{-1/4.9665}\right) \left(2 - 2^{-1/4.9665}\right)^{1.9665} \left(3 + \left(2^{-1/4.9665}-1\right)^2\right)\right]^k\]
\[< \frac{1}{2^{3k+\ell}} (0.9999994)^k.\]

Next, we compute
\[t(H,1-W) = \left[\left(\frac{1}{2}(1-p)\right)^\alpha\frac{1}{4}(1-p)^3\right]^k=\frac{1}{2^{3k+\ell}}\left[2^{3+\alpha}\left(\frac{1}{2}(1-p)\right)^\alpha\frac{1}{4}(1-p)^3\right]^k.\]
When substituting the value of $p$ chosen above into this, the expression inside of the square brackets evaluates to 1. Putting all of this together, we get
\[t(H,W)+t(H,1-W)<\frac{1}{2^{3k+\ell}} (0.9999994)^k + \frac{1}{2^{3k+\ell}} < 2\left(1/2\right)^{e(H)}.\]
This completes the proof. 
\end{proof}

Note that the graphon $W$ used in the proof of Theorem~\ref{th:uncommon} above is vertex-transitive. Thus, if $H$ is a graph obtained from the disjoint union of two graphs $F_1$ and $F_2$ by identifying one vertex of $F_1$ with one vertex of $F_2$, then $t(H,W)=t(F_1,W)t(F_2,W)$ and $t(H,1-W)=t(F_1,1-W)t(F_2,1-W)$ (for the specific graphon $W$ used in the proof). Thus, the same construction can be used to obtain the following.

\begin{thm}
\label{th:ttuncommon}
Let $(T,\varphi)$ be a $K_3$-tree such that $e_2(T,\varphi)=0$ and let $H=H(T,\varphi)$. If
\[v_2(T,\varphi)\geq 1.9665\cdot v_3(T,\varphi),\]
then $H$ is uncommon. 
\end{thm}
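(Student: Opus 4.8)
The plan is to mimic the proof of Theorem~\ref{th:uncommon} almost verbatim, exploiting the remark immediately preceding the statement: the graphon $W=W_{\vec{z},A}$ with $\vec{z}=(1/2,1/2)$ and $A$ having diagonal $p$ and off-diagonal $1$ is vertex-transitive, so $t(\cdot,W)$ and $t(\cdot,1-W)$ are multiplicative over the operation of gluing two graphs at a single vertex. First I would set $k:=v_3(T,\varphi)$, $\ell:=v_2(T,\varphi)$, $\alpha:=\ell/k\geq 1.9665$, and $p:=1-2^{-1/(3+\alpha)}$, exactly as in the proof of Theorem~\ref{th:uncommon}. Since $H=H(T,\varphi)$ with $e_2(T,\varphi)=0$ is built by gluing together $k$ triangles (the nodes $t$ with $|\varphi(t)|=3$) and $\ell$ edges (the nodes with $|\varphi(t)|=2$) along single shared vertices in a tree-like fashion, vertex-transitivity of $W$ gives
\[
t(H,W)=t(K_3,W)^{k}\,t(K_2,W)^{\ell},\qquad t(H,1-W)=t(K_3,1-W)^{k}\,t(K_2,1-W)^{\ell}.
\]
(One should note here that when $e_1(T,\varphi)>0$ some triangles share a vertex rather than being fully disjoint, but multiplicativity at a cut vertex still yields the displayed product; this is the one place where a short justification is needed rather than a direct appeal to Observation~\ref{obs:union}.)

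Next I would simply quote the computations from the proof of Theorem~\ref{th:uncommon}: with this $W$ we have $t(K_2,W)=\tfrac12(1+p)$, $t(K_2,1-W)=\tfrac12(1-p)$, $t(K_3,W)=\tfrac14 p^3+\tfrac34 p$, $t(K_3,1-W)=\tfrac14(1-p)^3$. Substituting and factoring out $2^{-3k-\ell}=2^{-(3+\alpha)k}$ gives
\[
t(H,W)=\frac{1}{2^{3k+\ell}}\Big[2^{3+\alpha}\big(\tfrac12(1+p)\big)^{\alpha}\big(\tfrac14 p^3+\tfrac34 p\big)\Big]^{k},
\]
and with the chosen $p$ the bracketed quantity is decreasing in $\alpha$ on $[0,2]$, hence at most its value at $\alpha=1.9665$, which is $<0.9999994$. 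Similarly $t(H,1-W)=2^{-3k-\ell}\big[2^{3+\alpha}(\tfrac12(1-p))^{\alpha}\tfrac14(1-p)^3\big]^k$, and the choice of $p$ makes the bracket exactly $1$. Adding, $t(H,W)+t(H,1-W)<2^{-3k-\ell}(0.9999994)^k+2^{-3k-\ell}<2\cdot(1/2)^{e(H)}$, since $e(H)=3k+\ell$ (each node of $T$ contributes $3$ or $1$ edges and there are no identified edges because $e_2(T,\varphi)=0$, while identifications at single vertices do not remove edges). Therefore $c(H)<2(1/2)^{e(H)}$, so $H$ is uncommon.

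The only genuinely new point compared with Theorem~\ref{th:uncommon}, and the thing I would state carefully, is the multiplicativity identity: for a vertex-transitive graphon $W$ and a graph $H$ obtained from $F_1,F_2$ by identifying one vertex of each, $t(H,W)=t(F_1,W)t(F_2,W)$. This follows because, writing $t(H,W)$ via a weighted sum over maps $f:V(H)\to[0,1]$ (or the appropriate integral), conditioning on the value at the cut vertex $v$ splits the sum into a product, and vertex-transitivity of $W$ means the conditional contribution of each side does not depend on which value $v$ takes; summing over $v$ then factors. Iterating along the tree $T$ yields the product formula for $H(T,\varphi)$. I expect this structural observation, together with bookkeeping that $e(H)=3k+\ell$ under the hypothesis $e_2(T,\varphi)=0$, to be the main (modest) obstacle; everything numerical is inherited from the proof of Theorem~\ref{th:uncommon}.
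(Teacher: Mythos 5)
Your proposal follows essentially the same route as the paper: the paper proves Theorem~\ref{th:ttuncommon} via nothing more than the remark immediately preceding it, which is exactly the vertex-transitivity observation you develop and the resulting iteration of $t(F_1\cup_v F_2,W)=t(F_1,W)t(F_2,W)$ along a leaf decomposition of $T$. Your accounting of $e(H)=3k+\ell$ under $e_2(T,\varphi)=0$ and the factorization $t(H,W)=t(K_3,W)^k t(K_2,W)^\ell$ are correct and are precisely what the authors intend.

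There is, however, one point that your write-up (and, to be fair, the paper's terse remark) glosses over. In the proof of Theorem~\ref{th:uncommon} the parameter $\alpha=\ell/k$ is forced into $[1.9665,2]$ by the choice $\ell=\lceil 1.9665k\rceil$, and the monotonicity of the bracketed expression is only asserted on $[0,2]$. In Theorem~\ref{th:ttuncommon} the hypothesis is merely $v_2\ge 1.9665\,v_3$, so $\alpha=v_2/v_3$ can be arbitrarily large. Your step ``decreasing on $[0,2]$, hence at most its value at $\alpha=1.9665$'' does not follow once $\alpha>2$, and you cannot simply cap $\alpha$ at $2$, because the graphon $W$ depends on $\alpha$ through $p=1-2^{-1/(3+\alpha)}$ and the calibration $t(H,1-W)=(1/2)^{e(H)}$ uses the true $\alpha$. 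To close this, one should check (numerically, as is done elsewhere in the paper, or by a short calculus argument) that the bracketed quantity $2\,(1+p)^\alpha(p^3+3p)$ with $p=1-2^{-1/(3+\alpha)}$ remains below $1$ for all $\alpha\ge 1.9665$, not just on $[1.9665,2]$; this does appear to hold, but it needs to be stated. A related small point: the hypothesis implicitly requires $v_3\ge1$, since with $v_3=0$ the graph $H$ is a forest and hence common; your proof also tacitly assumes $v_3\ge1$ when setting $\alpha=\ell/k$.
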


A graph $H$ is said to be \emph{positive} if $t(H,W)\geq0$ for every kernel $W$. If $H$ is a graph obtained from two copies of a graph $F$ by gluing them on an independent set, then $H$ is easily seen to be positive, and it is conjectured that all positive graphs arise in this way~\cite{Camarena+16}. Theorem~\ref{th:ttuncommon} provides examples of connected positive graphs that are uncommon, e.g. the graph obtained by taking two triangles and four edges all glued on a single vertex, which answers~\cite[Question~5.1]{KimLee24} in the affirmative (in a strong sense).

The construction in the proof of Theorem~\ref{th:uncommon} can also be used for longer odd cycles, as we show next.

\begin{thm}
For integers $k,r\geq1$, the graph $(k\cdot C_{2r+1})\sqcup (2rk\cdot K_2)$ is uncommon.
\end{thm}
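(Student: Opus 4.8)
The plan is to mimic the proof of Theorem~\ref{th:uncommon}, replacing the role of $K_3$ by the odd cycle $C_{2r+1}$ and the role of ``adding two disjoint edges per triangle'' by ``adding $2r$ disjoint edges per cycle'', since $C_{2r+1}$ has $2r+1$ edges and we are adjoining exactly $2r$ copies of $K_2$ per cycle (so that $e(H) = k(2r+1) + 2rk = (4r+1)k$ and the ratio $\ell/v$ is chosen to make the bipartite side collapse exactly to the random value). First I would use the same vertex-transitive two-part graphon $W = W_{\vec z, A}$ with $\vec z = (1/2,1/2)$ and $A$ having diagonal entries $p$ and off-diagonal entries $1$, where $p$ is a parameter to be tuned. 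By \eqref{eq:partition function}, one computes
\[t(K_2,W)=\tfrac12(1+p),\qquad t(K_2,1-W)=\tfrac12(1-p),\]
and a short computation with \eqref{eq:partition function} (summing over the two ways each cycle-vertex can be mapped to the two parts, using that every edge between the two parts has weight $1$ and every edge inside a part has weight $p$) gives a closed form for $t(C_{2r+1},W)$ and $t(C_{2r+1},1-W)$. Here the key structural fact is that $1-W$ restricted to the two parts has off-diagonal entry $0$, so $t(C_{2r+1},1-W)$ only picks up the ``monochromatic within one part'' contributions: $t(C_{2r+1},1-W) = 2\cdot (1/2)^{2r+1}(1-p)^{2r+1} = (1/2)^{2r}(1-p)^{2r+1}$. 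For $t(C_{2r+1},W)$ one gets a sum over closed walks; since $C_{2r+1}$ is an odd cycle, a mapping to $\{1,2\}$ contributes $p$ to the power equal to the number of monochromatic edges, and the number of ``non-monochromatic'' edges (edges crossing between the two blocks along the cycle) must be even, hence at most $2r$ — in particular it is never $2r+1$, which is exactly why the odd cycle is not Sidorenko and gives us room. In any case $t(C_{2r+1},W) = 2^{-(2r+1)}\big((1+p)^{2r+1} - (1-p)^{2r+1}\big)/\,$something; more directly, writing $t(C_{2r+1},W) = \operatorname{tr}(M^{2r+1})\cdot(\text{vertex weights})$ where $M = \begin{pmatrix} p/2 & 1/2 \\ 1/2 & p/2\end{pmatrix}$ has eigenvalues $(p+1)/2$ and $(p-1)/2$, so $t(C_{2r+1},W) = \big(\tfrac{p+1}{2}\big)^{2r+1} + \big(\tfrac{p-1}{2}\big)^{2r+1}$ and $t(C_{2r+1},1-W) = \big(\tfrac{1-p}{2}\big)^{2r+1}+\big(\tfrac{-(1-p)}{2}\big)^{2r+1} = 0$? — I would need to be careful here: the correct statement is that $1-W$ has matrix $\begin{pmatrix}1-p & 0 \\ 0 & 1-p\end{pmatrix}$ scaled by vertex weights, i.e. $N = \tfrac{1-p}{2}I$, so $t(C_{2r+1},1-W) = \operatorname{tr}(N^{2r+1})\cdot 1 = 2\big(\tfrac{1-p}{2}\big)^{2r+1}$, and $t(C_{2r+1},W) = \operatorname{tr}(M^{2r+1})\cdot 1$ with $M$ as above, where the extra vertex-weight factors are already absorbed since each block has mass $1/2$ and a cycle on $2r+1$ vertices contributes $(1/2)^{2r+1}$ times the number of homomorphisms into the $2$-vertex weighted graph with edge weights $p$ (loops) and $1$ (the edge) — so in fact $t(C_{2r+1},W) = (1/2)^{2r+1}\operatorname{tr}(\tilde M^{2r+1})$ with $\tilde M = \begin{pmatrix}p & 1\\1 & p\end{pmatrix}$, eigenvalues $p+1$ and $p-1$, giving $t(C_{2r+1},W) = (1/2)^{2r+1}\big((p+1)^{2r+1}+(p-1)^{2r+1}\big)$.

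Then I would set $\alpha := 2r$ (the number of $K_2$'s per cycle, as a ``weight exponent''), so that by Observation~\ref{obs:union},
\[t(H,W) = \big(t(K_2,W)^{2r}\,t(C_{2r+1},W)\big)^k,\qquad t(H,1-W) = \big(t(K_2,1-W)^{2r}\,t(C_{2r+1},1-W)\big)^k,\]
and I would factor out $2^{-e(H)} = 2^{-(4r+1)k}$ from each term. The $1-W$ term becomes
\[t(H,1-W) = 2^{-(4r+1)k}\Big[2^{4r+1}\big(\tfrac{1-p}{2}\big)^{2r}\cdot 2\big(\tfrac{1-p}{2}\big)^{2r+1}\Big]^k = 2^{-(4r+1)k}\big[2^{2r+2}(1-p)^{4r+1}\big]^k,\]
and the $W$ term becomes $2^{-(4r+1)k}\big[\,2^{2r}(1+p)^{2r}\cdot 2^{2r+1}\cdot 2^{-(2r+1)}\big((p+1)^{2r+1}+(p-1)^{2r+1}\big)\,\big]^k$, which simplifies to $2^{-(4r+1)k}\big[2^{2r}(1+p)^{2r}\big((p+1)^{2r+1}+(p-1)^{2r+1}\big)\big]^k$. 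The strategy is now to choose $p = p(r)$ so that the bracketed quantity in the $1-W$ term, namely $2^{2r+2}(1-p)^{4r+1}$, equals exactly $1$ — i.e. $1-p = 2^{-(2r+2)/(4r+1)}$ — and then to verify that for this value of $p$ the bracketed quantity in the $W$ term is strictly less than $1$. Since the $1-W$ bracket then contributes exactly $2^{-(4r+1)k}$ and the $W$ bracket contributes strictly less than $2^{-(4r+1)k}$, we get $t(H,W)+t(H,1-W) < 2\cdot 2^{-(4r+1)k} = 2(1/2)^{e(H)}$, proving $H$ is uncommon.

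The main obstacle, and the only substantive step, is the inequality $2^{2r}(1+p)^{2r}\big((p+1)^{2r+1}+(p-1)^{2r+1}\big) < 1$ for $p = 1 - 2^{-(2r+2)/(4r+1)}$, uniformly over all integers $r\geq 1$. Note that since $0 < p < 1$ the term $(p-1)^{2r+1}$ is negative, which is exactly the slack that makes this work — without it one would just recover equality (this reflects the fact that $C_{2r+1}$ is not Sidorenko while $C_{2r+1}$ together with enough pendant structure would be ``balanced''). I would handle this by first checking that $p\to 1/2$ as $r\to\infty$ (since $(2r+2)/(4r+1)\to 1/2$), so the bracket tends to a limiting expression that one can bound away from $1$, and then treating small $r$ (say $r\le$ some explicit bound) by direct numerical evaluation, exactly in the spirit of the ``$(0.9999994)^k$'' estimate in the proof of Theorem~\ref{th:uncommon}. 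Alternatively, and perhaps more cleanly, I would rewrite the bracket as $\big((1-p^2)^{-1}\big)^{?}$-free form: $2^{2r}(1+p)^{2r}(p+1)^{2r+1}\big(1 + \big(\tfrac{p-1}{p+1}\big)^{2r+1}\big) = \big(2(1+p)\big)^{2r}(1+p)\big(1-\big(\tfrac{1-p}{1+p}\big)^{2r+1}\big)$ and compare with $1 = 2^{2r+2}(1-p)^{4r+1}$ after substitution, reducing everything to a single-variable inequality in $p$ (or in $t := 2^{-1/(4r+1)}$) that can be verified by elementary estimates; the fact that the $1-p$ value is pinned down by the first equation means there is genuinely only one real number to check for each $r$, and monotonicity in $r$ plus a finite check finishes it. I expect the calculus to be routine once the closed forms for $t(C_{2r+1},\cdot)$ are in hand via the $2\times 2$ transfer matrix.
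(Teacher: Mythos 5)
Your overall strategy is identical to the paper's: take the vertex-transitive two-block graphon $W_{\vec z, A}$ with $\vec z = (1/2,1/2)$, diagonal entry $p$ and off-diagonal entry $1$; compute $t(C_{2r+1},\cdot)$ via the $2\times 2$ transfer matrix; tune $p$ so the $1-W$ contribution equals the random value exactly; and show the $W$ contribution is strictly below it. Your transfer-matrix formulas are also correct: $t(C_{2r+1},W) = \bigl(\tfrac{p+1}{2}\bigr)^{2r+1} + \bigl(\tfrac{p-1}{2}\bigr)^{2r+1}$ and $t(C_{2r+1},1-W) = 2\bigl(\tfrac{1-p}{2}\bigr)^{2r+1}$.

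However, there is a fatal arithmetic error in your simplification of the brackets that propagates to the choice of $p$. Factoring $2^{-(4r+1)k}$ out of $t(H,1-W) = \bigl[t(K_2,1-W)^{2r}\,t(C_{2r+1},1-W)\bigr]^k$ gives the bracket
\[
2^{4r+1}\Bigl(\frac{1-p}{2}\Bigr)^{2r}\cdot 2\Bigl(\frac{1-p}{2}\Bigr)^{2r+1} = 2^{4r+2}\cdot\frac{(1-p)^{4r+1}}{2^{4r+1}} = 2(1-p)^{4r+1},
\]
not $2^{2r+2}(1-p)^{4r+1}$ as you wrote; similarly, the correct $W$-bracket is $(1+p)^{2r}\bigl((p+1)^{2r+1}+(p-1)^{2r+1}\bigr)$, without your extra $2^{2r}$. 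Setting the $1-W$ bracket equal to $1$ therefore forces $p = 1 - 2^{-1/(4r+1)}$, which is the paper's choice and tends to $0$ as $r\to\infty$. Your value $p = 1 - 2^{-(2r+2)/(4r+1)}$ is much larger (about $0.43$ for $r=1$, tending to $1-2^{-1/2}\approx 0.29$), and with it the bracket you propose to bound is nowhere near $1$: already for $r=1$ the quantity $2^{2}(1+p)^{2}\bigl((p+1)^{3}+(p-1)^{3}\bigr)$ evaluates to roughly $22$. So the step you identify as ``the only substantive step'' would fail outright. With the arithmetic corrected, the argument becomes precisely the paper's, and the remaining work is to show
\[
\bigl(2-2^{-1/(4r+1)}\bigr)^{4r+1} - 2^{-(2r+1)/(4r+1)}\bigl(2-2^{-1/(4r+1)}\bigr)^{2r} < 1
\]
for all $r\geq 1$; the paper does this by direct numerical verification for $r\leq 6$ and an elementary AM--GM-based estimate for $r\geq 7$ (not the simple monotonicity-in-$r$ argument you sketch, which is not obviously available here).
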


\begin{proof}
Let $H=(k\cdot C_{2r+1})\sqcup (2rk\cdot K_2)$ and let
\[p:=1-2^{-1/(4r+1)}.\]
Let $W$ be the graphon $W_{\vec{z},A}$ where $\vec{z}=(1/2,1/2)$ and $A$ is a $2\times 2$ matrix whose diagonal entries are $p$ and off-diagonal entries are $1$. Note that the eigenvalues of $A$ are $p+1$ and $p-1$ and $J-A$ has eigenvalue $1-p$ with multiplicity two, where $J$ is the $2\times 2$ all-ones matrix. So, using, e.g.,~\cite[Example~5.11]{Lovasz12}, we get
\[t(C_{2r+1},W)=\left(\frac{p+1}{2}\right)^{2r+1} + \left(\frac{p-1}{2}\right)^{2r+1},\]
\[t(C_{2r+1},1-W)=2\left(\frac{1-p}{2}\right)^{2r+1}.\]
Thus, by Observation~\ref{obs:union},
\[t(H,W) = \left[\left(\frac{p+1}{2}\right)^{2r}\left(\left(\frac{p+1}{2}\right)^{2r+1} + \left(\frac{p-1}{2}\right)^{2r+1}\right)\right]^k\]
\[= \frac{1}{2^{(4r+1)k}}\left[(p+1)^{2r}\left((p+1)^{2r+1} + (p-1)^{2r+1}\right)\right]^k = \frac{1}{2^{(4r+1)k}}\left[(p+1)^{4r+1} + (p-1)^{2r+1}(p+1)^{2r}\right]^k.\]
Substituting in the value of $p$ yields
\[\frac{1}{2^{(4r+1)k}}\left[\left(2-2^{-1/(4r+1)}\right)^{4r+1} -2^{-(2r+1)/(4r+1)}\left(2-2^{-1/(4r+1)}\right)^{2r}\right]^k.\]
Our next goal is to show that the expression inside of the square brackets is less than one for all $r\geq 1$. That is, we want to prove that
\[\left(2-2^{-1/(4r+1)}\right)^{4r+1}-2^{-(2r+1)/(4r+1)}\left(2-2^{-1/(4r+1)}\right)^{2r} < 1\]
or, in other words,
\begin{equation}
\label{eq:<1}
\left(2-2^{-1/(4r+1)}\right)^{4r+1} < 1 + 2^{-(2r+1)/(4r+1)}\left(2-2^{-1/(4r+1)}\right)^{2r}.
\end{equation}
The inequality \eqref{eq:<1} can be easily verified by computer for $1\leq r\leq 6$. So, in the rest of the proof of \eqref{eq:<1}, we assume $r\geq7$. By the AM-GM Inequality, the right side of \eqref{eq:<1} can be bounded below as follows:
\[1 + 2^{-(2r+1)/(4r+1)}\left(2-2^{-1/(4r+1)}\right)^{2r} > 2\sqrt{2^{-(2r+1)/(4r+1)}\left(2-2^{-1/(4r+1)}\right)^{2r}}\]
\[=2^{(6r+1)/(8r+2)}\left(2-2^{-1/(4r+1)}\right)^{r}.\]
Thus, to prove \eqref{eq:<1}, it suffices to show that
\begin{equation}\label{eq:anotherThing}2^{(6r+1)/(8r+2)} \geq \left(2-2^{-1/(4r+1)}\right)^{3r+1}\end{equation}
or, in other words, that
\[2^{x - y} + 2^{-x}\geq 2\]
where $x=\frac{1}{4r+1}$ and $y=\frac{1}{2(3r+1)(4r+1)}$. This holds if and only if
\begin{equation}\label{eq:final?}x\geq y + \frac{\log\left(1+\sqrt{1-2^{-y}}\right)}{\log(2)}.\end{equation}
The right side of \eqref{eq:final?} can be bounded above as follows:
\[y+\frac{\log\left(1+\sqrt{1-2^{-y}}\right)}{\log(2)} \leq y+\frac{\sqrt{1-2^{-y}}}{\log(2)}=y+\frac{\sqrt{1-e^{-\log(2)y}}}{\log(2)}\]
\[\leq y+\frac{\sqrt{1-(1-\log(2)y)}}{\log(2)}= y + \sqrt{\frac{y}{\log(2)}}.\]
Thus, to prove \eqref{eq:final?}, it suffices to show that
\[\frac{1}{2(3r+1)(4r+1)} + \sqrt{\frac{1}{\log(8)(3r+1)(4r+1)}} < \frac{1}{4r+1}.\]
This inequality holds for all $r\geq7$. Therefore, we have that $t(H,W)<\frac{1}{2^{e(H)}}$.

Next, we compute
\[t(H,1-W) = \left[\left(\frac{1-p}{2}\right)^{2r}\left(2\left(\frac{1-p}{2}\right)^{2r+1}\right)\right]^k = \frac{1}{2^{(4r+1)k}}\left[2(1-p)^{4r+1}\right]^k.\]
When substituting the value of $p$ chosen above into this, the expression inside of the square brackets evaluates to 1. Putting all of this together, we get
\[t(H,W)+t(H,1-W)<2(1/2)^{(4r+1)k} = 2(1/2)^{e(H)}\]
which completes the proof. 
\end{proof}

Of course, the above theorem can also be extended to graphs built up by gluing together odd cycles and edges on single vertices, analogous to the way that Theorem~\ref{th:ttuncommon} follows from the same proof as Theorem~\ref{th:uncommon}.

\section{Conclusion}

As mentioned in the introduction, one of the most classical examples of an uncommon graph is the paw graph $P$ obtained from $K_3$ by adding a pendant edge. The problem of computing the Ramsey multiplicity constant of $P$ seems to be intimately linked to the analogous problem for $K_3\sqcup K_2$. Given that $2\cdot (K_3\sqcup K_2)$ is common by Theorem~\ref{th:DE(K3cupK3)}, we believe that the same may be true for two disjoint copies of $P$. 

\begin{conj}
If $P$ is the paw graph, then $P\sqcup P$ is common.
\end{conj}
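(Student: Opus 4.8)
The plan is to prove the equivalent statement that $t(P,W)^{2}+t(P,1-W)^{2}\ge 2\left(1/2\right)^{8}=2^{-7}$ for every graphon $W$, where we have used Observation~\ref{obs:union} to write $t(P\sqcup P,W)=t(P,W)^{2}$. The first step is a cheap convexity reduction: since $a^{2}+b^{2}\ge\tfrac12(a+b)^{2}$, the bound is immediate whenever $t(P,W)+t(P,1-W)\ge\tfrac18$, so it suffices to treat graphons $W$ with $t(P,W)+t(P,1-W)<\tfrac18$, i.e.\ the near-extremal graphons for the Ramsey multiplicity constant of the paw studied around Theorem~\ref{th:paw}. For such $W$ the two quantities $t(P,W)$ and $t(P,1-W)$ must be unbalanced, and the aim is to quantify the imbalance precisely enough that $t(P,W)^{2}+t(P,1-W)^{2}\ge 2^{-7}$ still holds even though their sum lies below $\tfrac18$. (As a sanity check, at the graphon witnessing Theorem~\ref{th:paw} one has $t(P,W)\approx 0.0230$ and $t(P,1-W)\approx 0.0984$, so that $t(P,W)^{2}+t(P,1-W)^{2}\approx 0.0102>2^{-7}$.)

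The second ingredient is supersaturation for the paw. Writing $\deg_{W}(c)=\int W(c,d)\,\mathrm{d}d$ and $\tau_{W}(c)=\int W(a,b)W(a,c)W(b,c)\,\mathrm{d}a\,\mathrm{d}b$, one has $t(P,W)=\int\tau_{W}(c)\deg_{W}(c)\,\mathrm{d}c$, $t(K_{3},W)=\int\tau_{W}(c)\,\mathrm{d}c$, $t(K_{2},W)=\int\deg_{W}(c)\,\mathrm{d}c$ and the pointwise bound $\tau_{W}(c)\le\deg_{W}(c)^{2}$; Jensen's Inequality then yields $t(P,W)\ge t(K_{3},W)^{3/2}$, and Cauchy--Schwarz yields $t(P,W)\,t(K_{2},W)\ge t(K_{3},W)^{2}$. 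Feeding these, their $1-W$ analogues, Goodman's Theorem (Theorem~\ref{th:GoodmanRamsey}) and the Fisher/Razborov supersaturation bounds for $K_{3}$ (Theorem~\ref{th:Fisher} and Corollary~\ref{cor:Fisher}) into a constrained optimization over the few real parameters $t(K_{2},W)$, $t(K_{3},W)$, $t(K_{3},1-W)$ — in the spirit of Lemma~\ref{lem:reduction} and Sections~\ref{sec:quick}--\ref{sec:K3} — one would try to close the hard regime, supplemented by a second-order analysis at the constant graphon $W\equiv 1/2$ confirming that it is a local minimum for $P\sqcup P$ (this would follow from $P$ being locally common, which I expect, since the uncommonness of $P$ is witnessed by graphons far from $W\equiv 1/2$).

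The hard part is that the elementary correlation inequalities are too lossy and the margin is razor-thin. The conjectured extremal graphon $W\equiv 1/2$ gives $t(P,W)^{2}+t(P,1-W)^{2}=2^{-7}$ with \emph{no} slack, whereas both $t(P,W)\ge t(K_{3},W)^{3/2}$ and $t(P,W)t(K_{2},W)\ge t(K_{3},W)^{2}$ undershoot the true value $t(P,W)=1/16$ at $W\equiv 1/2$ by a constant factor; squaring amplifies the loss, so for instance $t(P,W)^{2}+t(P,1-W)^{2}\ge t(K_{3},W)^{3}+t(K_{3},1-W)^{3}\ge 2\bigl(1/8\bigr)^{3}=2^{-8}$ misses by exactly a factor of two. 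A direct appeal to Lemma~\ref{lem:reduction} with $J=P$, $k=2$, $\ell=0$ also fails, since $\rho_{P}(1)=0$ because the complete bipartite graphon of edge density $1/2$ contains no paw; and disjoint unions of small cliques show that no power-law bound $t(P,W)\ge t(K_{3},W)^{\alpha}$ can be simultaneously valid and tight at $W\equiv 1/2$, while the bound $t(P,W)\ge t(K_{3},W)^{2}/t(K_{2},W)$, though valid, is not tight at $W\equiv 1/2$ either. What is really required is a sharp supersaturation inequality for the paw, tightly bounding $\inf\{t(P,W):t(K_{2},W)=z/2\}$ — and, more likely, controlling the joint behaviour of $(t(P,W),t(P,1-W))$, since a pointwise lower bound on $t(P,W)$ in terms of edge and triangle density alone appears to lose too much information — which is itself an extremal problem of flag-algebra difficulty; obtaining this, and then verifying that the resulting constrained optimum is attained only at $W\equiv 1/2$, is where essentially all the work lies. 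A computer-assisted flag-algebra argument in the style of Remark~\ref{rem:K3K2flag} may be the most realistic route.
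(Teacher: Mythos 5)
The statement you were given is labelled a \emph{conjecture} in the paper, not a theorem: there is no proof by the authors to compare against, and the paper's only comment is a wish for an eventual (possibly computer-assisted, flag-algebra) proof. Your write-up is therefore not a proof but a diagnosis of obstacles, and you say so yourself. On the substance, your diagnosis checks out. The reduction of ``$P\sqcup P$ common'' to $t(P,W)^2+t(P,1-W)^2\ge 2^{-7}$ and the convexity reduction to the regime $t(P,W)+t(P,1-W)<\tfrac18$ are both correct. The two elementary supersaturation bounds you cite are valid: $t(P,W)\ge t(K_3,W)^{3/2}$ follows from $\tau_W(c)\le \deg_W(c)^2$ together with Jensen applied to $x\mapsto x^{3/2}$, and $t(P,W)\,t(K_2,W)\ge t(K_3,W)^2$ follows from Cauchy--Schwarz together with the same pointwise bound. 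Your observation that, combined with Goodman's theorem and power-mean convexity, these only yield $t(P,W)^2+t(P,1-W)^2\ge 2^{-8}$ (a factor of $2$ short of the target) is accurate, and so is your observation that $\rho_P(1)=0$ (a triangle-free graphon of edge density $1/2$ has no paws), which rules out a direct application of Lemma~\ref{lem:reduction} with $J=P$. Your numerical sanity check at the near-extremal graphon of Theorem~\ref{th:paw} matches the paper's parameters.

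The gap is exactly the one you name: there is no known supersaturation inequality for the paw sharp enough to close the remaining factor of $2$, and a pointwise lower bound on $t(P,W)$ in terms of $t(K_2,W)$ alone cannot suffice, since $\rho_P$ vanishes at edge density $1/2$ --- one really needs to control $t(P,W)$ and $t(P,1-W)$ jointly, e.g.\ via the triangle densities of both $W$ and $1-W$, and then show the constrained optimum is attained only at $W\equiv 1/2$. None of the lemmas in the paper supply such a bound, which is precisely why the statement is left as a conjecture. In short, your proposal is an honest and technically sound account of why the problem is hard, and its conclusion that a flag-algebra computation in the spirit of Remark~\ref{rem:K3K2flag} is the most realistic route echoes the authors' own comment immediately following the conjecture.
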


If true, then it would be particularly interesting to have a flag algebra-free proof of this; however, beggars can't be choosers. 

It would be interesting to extend the results in this paper to other graphs. Given a graph $H$, define
\[DE(H):=\inf\{\ell\geq0: H\sqcup (\ell\cdot K_2)\text{ is uncommon}\}.\]
Here, $DE$ stands for ``disjoint edges.'' Note that, if $H\sqcup (\ell\cdot K_2)$ is common for all $\ell\geq0$, then $DE(H)=\infty$. Theorems~\ref{th:DE(K3cupK3)},~\ref{th:5/3} and~\ref{th:uncommon} can be viewed as bounds on $DE(k\cdot K_3)$ for certain values of $k$. We ask the following. 

\begin{ques}
Does the sequence $(DE(k\cdot K_3)/k)_{k=1}^\infty$ converge? If so, what is its limit?
\end{ques}

It would also be interesting to compute or bound $DE(H)$ for families graphs which are not covered by the results in this paper. In particular, the results proved in Section~\ref{sec:K3} may extend nicely to longer odd cycles. Some, but not all, of the ingredients needed for this already exist in the literature. Specifically, the fact that odd cycles are strongly common was proven very recently in~\cite{KimLee24} (the case of the $5$-cycle was proven in~\cite{Behague+Second}) and a correlation inequality generalizing Lemma~\ref{lem:tt} can be derived from a result of~\cite{Behague+Second}. A supersaturation theorem for $C_5$ in graphons of specific edge densities was obtained in~\cite{Bennet+20}, but the supersaturation problem for odd cycles is not well understood in general; this is the main barrier in generalizing our results on $K_3$ to longer odd cycles. 

The quantity $DE(H)$ is likely to be closely related to the function $\UC(H)$ defined in~\cite{GrzesikLeeLidickyVolec22} to be the minimum number of edges in a tree $T$ such that gluing a pendant copy of $T$ to $H$ yields an uncommon graph; when no such tree exists, $\UC(H):=\infty$. However, as Proposition~\ref{prop:pendant} and the case $k=3$ of Theorem~\ref{th:5/3} show, adding pendant edges can have a different effect than adding disjoint edges; see Figure~\ref{fig:3K3} below. The authors of~\cite{GrzesikLeeLidickyVolec22} raise the question of bounding $\UC(H)$ for non-bipartite graphs. Theorem~\ref{th:ttuncommon} provides an upper bound on $\UC(H)$ when $H$ is, for example, a \emph{triangle-vertex tree}; i.e. a triangle-tree in which one is only allowed to glue together two triangles on a single vertex (not on an edge).

\begin{figure}[htbp]
\begin{center}

\begin{tikzpicture}[scale=0.6]
  % define vertices with black circles
  \node[circle,fill,inner sep=1.5pt, minimum size=1mm] (A) at (330:1) {};
  \node[circle,fill,inner sep=1.5pt, minimum size=1mm] (B) at (90:1) {};
  \node[circle,fill,inner sep=1.5pt, minimum size=1mm] (C) at (210:1) {};
  \begin{scope}[shift={(2.5,0)}]
  \node[circle,fill,inner sep=1.5pt, minimum size=1mm] (D) at (330:1) {};
  \node[circle,fill,inner sep=1.5pt, minimum size=1mm] (E) at (90:1) {};
  \node[circle,fill,inner sep=1.5pt, minimum size=1mm] (F) at (210:1) {};
  \end{scope}
  \begin{scope}[shift={(5,0)}]
  \node[circle,fill,inner sep=1.5pt, minimum size=1mm] (G) at (330:1) {};
  \node[circle,fill,inner sep=1.5pt, minimum size=1mm] (H) at (90:1) {};
  \node[circle,fill,inner sep=1.5pt, minimum size=1mm] (I) at (210:1) {};
  \end{scope}
  \begin{scope}[shift={(6.5,0)}]
  \node[circle,fill,inner sep=1.5pt, minimum size=1mm] (J) at (270:0.5) {};
  \node[circle,fill,inner sep=1.5pt, minimum size=1mm] (K) at (90:1) {};
  \end{scope}
  \begin{scope}[shift={(7.5,0)}]
  \node[circle,fill,inner sep=1.5pt, minimum size=1mm] (L) at (270:0.5) {};
  \node[circle,fill,inner sep=1.5pt, minimum size=1mm] (M) at (90:1) {};
  \end{scope}
  \begin{scope}[shift={(8.5,0)}]
  \node[circle,fill,inner sep=1.5pt, minimum size=1mm] (N) at (270:0.5) {};
  \node[circle,fill,inner sep=1.5pt, minimum size=1mm] (O) at (90:1) {};
  \end{scope}
  \begin{scope}[shift={(9.5,0)}]
  \node[circle,fill,inner sep=1.5pt, minimum size=1mm] (P) at (270:0.5) {};
  \node[circle,fill,inner sep=1.5pt, minimum size=1mm] (Q) at (90:1) {};
  \end{scope}
  \begin{scope}[shift={(10.5,0)}]
  \node[circle,fill,inner sep=1.5pt, minimum size=1mm] (R) at (270:0.5) {};
  \node[circle,fill,inner sep=1.5pt, minimum size=1mm] (S) at (90:1) {};
  \end{scope}
  % draw edges
  \draw (A) -- (B) -- (C) -- (A);
  \draw (D) -- (E) -- (F) -- (D);
  \draw (G) -- (H) -- (I) -- (G);
  \draw (J)--(K);
  \draw (L)--(M);
  \draw (N)--(O);
  \draw (P)--(Q);
  \draw (R)--(S);
  
  \node (H1) at (5.25,-1.5) {$(3\cdot K_3)\sqcup(5\cdot K_2)$};
\end{tikzpicture}
\vspace{1em}

\begin{tikzpicture}[scale=0.6]
  % define vertices with black circles
  \node[circle,fill,inner sep=1.5pt, minimum size=1mm] (A) at (330:1) {};
  \node[circle,fill,inner sep=1.5pt, minimum size=1mm] (B) at (90:1) {};
  \node[circle,fill,inner sep=1.5pt, minimum size=1mm] (C) at (210:1) {};
  \node[circle,fill,inner sep=1.5pt, minimum size=1mm] (N) at (90:2) {};
  \begin{scope}[shift={(2.5,0)}]
  \node[circle,fill,inner sep=1.5pt, minimum size=1mm] (D) at (330:1) {};
  \node[circle,fill,inner sep=1.5pt, minimum size=1mm] (E) at (90:1) {};
  \node[circle,fill,inner sep=1.5pt, minimum size=1mm] (F) at (210:1) {};
  \node[circle,fill,inner sep=1.5pt, minimum size=1mm] (O) at (90:2) {};
  \end{scope}
  \begin{scope}[shift={(5,0)}]
  \node[circle,fill,inner sep=1.5pt, minimum size=1mm] (G) at (330:1) {};
  \node[circle,fill,inner sep=1.5pt, minimum size=1mm] (H) at (90:1) {};
  \node[circle,fill,inner sep=1.5pt, minimum size=1mm] (I) at (210:1) {};
  \node[circle,fill,inner sep=1.5pt, minimum size=1mm] (P) at (90:2) {};
  \end{scope}
  \begin{scope}[shift={(6.5,0)}]
  \node[circle,fill,inner sep=1.5pt, minimum size=1mm] (J) at (270:0.5) {};
  \node[circle,fill,inner sep=1.5pt, minimum size=1mm] (K) at (90:1) {};
  \end{scope}
  \begin{scope}[shift={(7.5,0)}]
  \node[circle,fill,inner sep=1.5pt, minimum size=1mm] (L) at (270:0.5) {};
  \node[circle,fill,inner sep=1.5pt, minimum size=1mm] (M) at (90:1) {};
  \end{scope}
  % draw edges
  \draw (A) -- (B) -- (C) -- (A);
  \draw (D) -- (E) -- (F) -- (D);
  \draw (G) -- (H) -- (I) -- (G);
  \draw (J)--(K);
  \draw (L)--(M);
  \draw (B)--(N);
  \draw (E)--(O);
  \draw (H)--(P);
  
  \node (H2) at (3.75,-1.5) {$(3\cdot P)\sqcup(2\cdot K_2)$};
\end{tikzpicture}
\vspace{-1em}

\end{center}
    \caption{The graph $(3\cdot K_3)\sqcup(5\cdot K_2)$ is common by Theorem~\ref{th:5/3} and the graph $(3\cdot P)\sqcup(2\cdot K_2)$ is uncommon by Proposition~\ref{prop:pendant}. }
    \label{fig:3K3}
\end{figure}
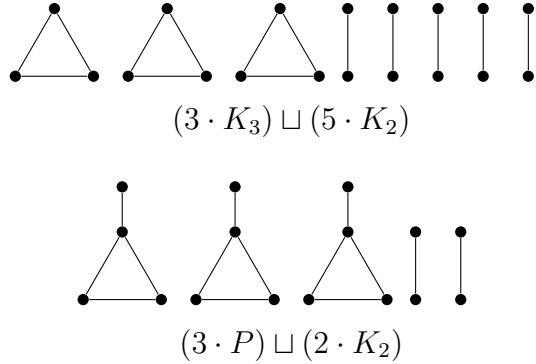

%A result of Jagger, \v{S}tov\'i\v{c}ek and Thomason~\cite{JaggerStovicekThomason96} says that $\UC(H)$ is finite for every non-bipartite graph $H$. Thus, adding a pendant tree to a non-bipartite graph is a well-known method for producing an uncommon graph. Also, every graph containing a $K_4$ is automatically uncommon by another result of~\cite{JaggerStovicekThomason96}. However, it seems that not much is known about uncommon graphs of chromatic number three and large minimum degree. Using Theorem~\ref{th:ttuncommon}, we can get examples with minimum degree two but, beyond that, we do not know

An inherent limitation of Lemma~\ref{lem:reduction} is that we do not know of many graphs $H$ which are $g$-bounded for a reasonably large function $g$. In particular, all of the known examples of strongly common graphs are Sidorenko graphs and odd cycles. It would be interesting to find new examples of non-bipartite strongly common graphs (if they exist) or examples of graphs that are $g$-bounded for other (non-trivial) functions $g$.

The proof of Theorem~\ref{th:uncommon} actually establishes a stronger statement. Specifically, if $H=k\cdot K_3\sqcup \ell\cdot K_2$ where $k\geq1$ and $\ell =\lceil1.9665k\rceil$, then there is a graphon $W$ such that
\[\max\{t(H,W),t(H,1-W)\}\leq(1/2)^{e(H)}\]
\[\min\{t(H,W),t(H,1-W)\}< (1/2)^{e(H)}.\]
If $H$ satisfies these conditions, one immediately gets that the disjoint union of any positive number of copies of $H$ is uncommon. It would be interesting to get a better understanding of the class of graphs $H$ with the property that such a graphon $W$ exists.

\begin{rem}
After submitting our paper to arxiv, we learned that Chen and Ma~\cite{ChenMa23+} obtained a much more general result than Theorem~\ref{th:commonNotStrongly}. Specifically, they prove that every graph containing a triangle, except for $K_3$ itself, is not strongly common. This was generalized by Versteegen~\cite{Versteegen23+} who proved that a graph of odd girth is strongly common if and only if it is an odd cycle. 
\end{rem}

\bibliographystyle{plain}

\appendix

\section{Simpler But Less Powerful Sufficient Conditions}
\label{appendix}

The inequalities \eqref{eq:ygood} and \eqref{eq:ybad} seem to be fairly unwieldy in general. Our goal in this appendix is to obtain sufficient conditions for these inequalities that may be easier to analyze in practice. To simplify \eqref{eq:ybad} for $\ell\geq0$, we apply the following simple inequality for real numbers.

\begin{lem}
\label{lem:numbers}
Let $a,b,c,d,s,t\geq0$ such that $b\geq d$ and $s\geq t$. Then
\[\left(ab^s-cd^s\right)\left(b^t+d^t\right)\geq \left(ab^t-cd^t\right)\left(b^s+d^s\right).\]
\end{lem}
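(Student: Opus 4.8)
The plan is to prove the inequality
\[\left(ab^s-cd^s\right)\left(b^t+d^t\right)\geq \left(ab^t-cd^t\right)\left(b^s+d^s\right)\]
by expanding both sides and cancelling the common terms. Expanding the left side gives $ab^{s+t}+ab^sd^t-cb^td^s-cd^{s+t}$, and expanding the right side gives $ab^{s+t}+ab^td^s-cb^sd^t-cd^{s+t}$. The terms $ab^{s+t}$ and $cd^{s+t}$ appear on both sides and cancel. Thus the desired inequality is equivalent to
\[ab^sd^t-cb^td^s\geq ab^td^s-cb^sd^t,\]
which rearranges to
\[(a+c)\left(b^sd^t-b^td^s\right)\geq0,\]
or, after factoring out $b^td^t\geq0$ (treating the degenerate cases $b=0$ or $d=0$ separately, where the claim is immediate since then one of $b^sd^t-b^td^s$ vanishes or has the correct sign), to $(a+c)b^td^t\left(b^{s-t}-d^{s-t}\right)\geq0$.

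From here the result follows from the hypotheses: $a+c\geq0$, $b^td^t\geq0$, and since $b\geq d\geq0$ and $s-t\geq0$ we have $b^{s-t}\geq d^{s-t}$, so the final factor is non-negative. The only mild subtlety is the handling of boundary cases: if $d=0$ then $b^sd^t-b^td^s = b^sd^t - 0$ when $t>0$ (non-negative) and equals $0$ when $t=0$; if $b=0$ then $b\geq d$ forces $d=0$ as well and everything is trivially zero; also if $s=t$ the expression $b^{s-t}-d^{s-t}=1-1=0$ and equality holds. In all cases the inequality is satisfied.

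I do not anticipate any genuine obstacle here; this is a routine expansion-and-factor argument, and the only thing to be careful about is phrasing the factorization so that it remains valid when some of $b,d,s,t$ equal zero (so as not to divide by $b^t d^t$ when it is zero, and not to write $b^{s-t}$ when both the base and exponent could be zero). I would simply present the equivalent form $(a+c)(b^sd^t-b^td^s)\geq0$ and then observe directly that $b^sd^t\geq b^td^s$ whenever $b\geq d\geq 0$ and $s\geq t\geq 0$, which handles every case at once without any division.
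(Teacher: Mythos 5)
Your proposal is correct and follows essentially the same route as the paper: expand, cancel the $ab^{s+t}$ and $cd^{s+t}$ terms, and reduce to the core inequality $b^sd^t\geq b^td^s$. The only cosmetic difference is that you factor the remainder as $(a+c)(b^sd^t-b^td^s)$, whereas the paper keeps $a$ and $c$ separate and checks $ab^sd^t\geq ab^td^s$ and $cd^tb^s\geq cd^sb^t$ individually; both amount to the same observation.
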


\begin{proof}
Expand both sides of the inequality. After doing a bit of cancellation, it becomes
\[ab^sd^t-cd^sb^t\geq ab^td^s-cd^tb^s\]
which is the same as
\[ab^sd^t+cd^tb^s\geq ab^td^s+cd^sb^t.\]
Since $s\geq t$ and $b\geq d$, we have $ab^sd^t\geq ab^td^s$ and $cd^tb^s\geq cd^sb^t$; so, the inequality holds. 
\end{proof}

\begin{lem}
\label{lem:simplifyybad}
Let $k,\ell,g$ and $\rho$ satisfy the hypotheses of Proposition~\ref{prop:reduction}. For $0\leq x\leq 1$, if $\ell\geq\ell_0$ and 
\[\frac{g(1+x)(1+x)^{\frac{\ell_0}{k-1}}-g(1-x)(1-x)^{\frac{\ell_0}{k-1}}}{(1+x)^{\frac{\ell_0}{k-1}}+ (1-x)^{\frac{\ell_0}{k-1}}}\geq g(1+x)-\rho(1+x)\]
then \eqref{eq:ybad} holds. In particular, if $\ell\geq 0$ and
\begin{equation}
\label{eq:ybadweaker}
2\rho(1+x)\geq g(1+x)+g(1-x),
\end{equation}
then \eqref{eq:ybad} holds. 
\end{lem}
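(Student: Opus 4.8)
The plan is to prove that the left-hand side of \eqref{eq:ybad}, viewed as a function of the exponent $\ell/(k-1)$, is non-decreasing; granting this, the lemma is immediate, since $\ell\ge\ell_0$ together with $k>1$ forces $\ell/(k-1)\ge\ell_0/(k-1)\ge 0$, so the left-hand side of \eqref{eq:ybad} is at least the left-hand side of the hypothesized inequality, which in turn is at least $g(1+x)-\rho(1+x)$.

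To carry this out I would fix $0\le x\le 1$ and set $a:=g(1+x)$, $b:=1+x$, $c:=g(1-x)$, $d:=1-x$. Then $a,b,c,d\ge 0$ (here the only property of $g$ used is that it is non-negative) and $b\ge d$ (using $x\ge 0$), and $b>0$, so $b^s+d^s>0$ for every $s\ge 0$. The left-hand side of \eqref{eq:ybad} is then $\tfrac{ab^s-cd^s}{b^s+d^s}$ with $s=\ell/(k-1)$, and the hypothesis of the lemma is that $\tfrac{ab^t-cd^t}{b^t+d^t}\ge g(1+x)-\rho(1+x)$ with $t=\ell_0/(k-1)$. The monotonicity claim is exactly the inequality $\tfrac{ab^s-cd^s}{b^s+d^s}\ge\tfrac{ab^t-cd^t}{b^t+d^t}$ for $s\ge t\ge 0$, which is precisely what Lemma~\ref{lem:numbers} was designed to give: applying it with these $a,b,c,d,s,t$ yields $(ab^s-cd^s)(b^t+d^t)\ge(ab^t-cd^t)(b^s+d^s)$, and dividing by the positive quantity $(b^s+d^s)(b^t+d^t)$ completes the step. (At the endpoint $x=1$ one has $d=0$; any $0^0$ that arises is read as $1$, as elsewhere in the paper, and the argument is unaffected.)

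For the ``in particular'' clause I would simply specialize to $\ell_0=0$: then $b^t=d^t=1$, the hypothesized inequality becomes $\tfrac{g(1+x)-g(1-x)}{2}\ge g(1+x)-\rho(1+x)$, and rearranging gives $2\rho(1+x)\ge g(1+x)+g(1-x)$, which is \eqref{eq:ybadweaker}. I do not anticipate a genuine obstacle here; the only points requiring care are lining up the variables of Lemma~\ref{lem:numbers} so that its hypothesis $b\ge d$ corresponds to $x\ge 0$ (rather than the reverse), and checking the boundary case $x=1$, both of which are routine.
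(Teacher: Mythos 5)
Your proposal is correct and takes essentially the same approach as the paper's proof: both reduce to a single application of Lemma~\ref{lem:numbers}, followed by division by the positive denominators, and then specialize to $\ell_0=0$ for the ``in particular'' clause. The only difference is a cosmetic reparametrization (you take $b=1+x$, $s=\ell/(k-1)$, while the paper takes $b=(1+x)^{1/(k-1)}$, $s=\ell$), which yields the same quantity $b^s$ in both cases.
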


\begin{proof}
Applying Lemma~\ref{lem:numbers} with $a=g(1+x)$, $b=(1+x)^{\frac{1}{k-1}},c=g(1-x),d=(1-x)^{\frac{1}{k-1}},s=\ell$ and $t=\ell_0$ yields
\[\frac{g(1+x)(1+x)^{\frac{\ell}{k-1}} -g(1-x)(1-x)^{\frac{\ell}{k-1}}}{(1+x)^{\frac{\ell}{k-1}} + (1-x)^{\frac{\ell}{k-1}}}\geq \frac{g(1+x)(1+x)^{\frac{\ell_0}{k-1}} -g(1-x)(1-x)^{\frac{\ell_0}{k-1}}}{(1+x)^{\frac{\ell_0}{k-1}} + (1-x)^{\frac{\ell_0}{k-1}}}.\]
Thus, if the right side of this inequality is at least $g(1+x)-\rho(1+x)$, then so is the left. 

For the ``in particular'' part of the lemma, if $\ell\geq0$, then we can apply the first part of the lemma with $\ell_0=0$ to get that the following inequality implies \eqref{eq:ybad}:
\[\frac{g(1+x)-g(1-x)}{2}\geq g(1+x)-\rho(1+x).\]
Rearranging this inequality gives us \eqref{eq:ybadweaker}.
\end{proof}

Next, we use H\"older's Inequalityto find simpler sufficient conditions for \eqref{eq:ygood}.

\begin{lem}[H\"older's Inequality, see~{\cite[Section~2.8]{Mitrinovic70}}]
\label{lem:Holder}
For $n\geq1$, if $x_1,\dots,x_n,y_1,\dots,y_n$ are positive real numbers and $p,q>1$ such that $\frac{1}{p}+\frac{1}{q}=1$, then
\[\sum_{i=1}^nx_iy_i\leq \left(\sum_{i=1}^nx_i^p\right)^{1/p}\left(\sum_{i=1}^ny_i^q\right)^{1/q}.\]
\end{lem}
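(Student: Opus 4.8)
The plan is to deduce H\"older's Inequality from the elementary ``Young inequality'': for nonnegative reals $a,b$ and exponents $p,q>1$ with $\frac1p+\frac1q=1$,
\[ab\leq \frac{a^p}{p}+\frac{b^q}{q}.\]
This itself follows from convexity: when $a,b>0$ one may write $ab=\exp\!\bigl(\tfrac1p\log(a^p)+\tfrac1q\log(b^q)\bigr)$ and apply Jensen's Inequality to the convex function $t\mapsto e^t$ (equivalently, the weighted AM--GM inequality), which yields $ab\leq\tfrac1p a^p+\tfrac1q b^q$; the case $a=0$ or $b=0$ is immediate.

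Then I would reduce to the normalized situation. If $\sum_{i=1}^n x_i^p=0$, then every $x_i=0$ and both sides of the claimed inequality are $0$; likewise if $\sum_{i=1}^n y_i^q=0$. So assume $S:=\bigl(\sum_{i=1}^n x_i^p\bigr)^{1/p}$ and $T:=\bigl(\sum_{i=1}^n y_i^q\bigr)^{1/q}$ are both positive, and set $u_i:=x_i/S$ and $v_i:=y_i/T$, so that $\sum_i u_i^p=\sum_i v_i^q=1$. Applying Young's inequality to each pair $(u_i,v_i)$ and summing over $i$,
\[\sum_{i=1}^n u_iv_i\leq\sum_{i=1}^n\Bigl(\frac{u_i^p}{p}+\frac{v_i^q}{q}\Bigr)=\frac1p+\frac1q=1.\]
Multiplying both sides by $ST$ and unwinding the definitions of $u_i,v_i,S,T$ gives $\sum_{i=1}^n x_iy_i\leq ST=\bigl(\sum_i x_i^p\bigr)^{1/p}\bigl(\sum_i y_i^q\bigr)^{1/q}$, which is the assertion.

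There is essentially no obstacle here: this is a textbook inequality, and the only points requiring any attention are the justification of Young's inequality (where convexity is used) and the trivial bookkeeping for the case that one of the power sums vanishes. In fact, since every application of the lemma in this paper uses strictly positive entries (see the proof of Theorem~\ref{th:correlated5/3}), the degenerate-case discussion may safely be dropped; and, because the inequality is entirely standard, one could alternatively just refer the reader to~\cite[Section~2.8]{Mitrinovic70} and omit a proof altogether.
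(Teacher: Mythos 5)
Your proof is correct: the reduction to the normalized case and the application of Young's inequality (itself a consequence of weighted AM--GM/convexity) is the standard textbook derivation of H\"older's inequality. The paper does not actually supply a proof of this lemma---it simply states it with a citation to Mitrinovi\'c's \emph{Analytic Inequalities}---so there is no paper argument to compare against; your closing remark that one could omit the proof and cite the reference is precisely what the authors do.
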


\begin{claim}
\label{claim:Holder}
For any non-negative real numbers $b_1,b_2,s,m$ such that $m\geq s>0$,
\[\left(b_1^{s} + b_2^{s}\right)^m\leq 2^{m-s}\left(b_1^m+b_2^m\right)^{s}.\]
\end{claim}

\begin{proof}
If $m=s$, then both sides of the inequality evaluate to $\left(b_1^{s} + b_2^{s}\right)^m$ and there is nothing to prove; so, assume that $m>s$. Define $p=m/s$ and $q=m/(m-s)$ and note that $\frac{1}{p}+\frac{1}{q}=1$. Let $x_1=b_1^s, x_2=b_2^s$ and $y_1=y_2=1$. By Lemma~\ref{lem:Holder},
\[x_1y_1+x_2y_2\leq \left(x_1^p+x_2^p\right)^{1/p}\left(y_1^q+y_2^q\right)^{1/q}.\]
In other words, 
\[b_1^s+b_2^s\leq \left(b_1^m+b_2^m\right)^{s/m}\left(1^{m/(m-s)}+1^{m/(m-s)}\right)^{(m-s)/m}.\]
The desired inequality now follows by raising both sides to the power $m$. 
\end{proof}

\begin{lem}
\label{lem:simplifyygood}
Let $k,\ell,g$ and $\rho$ satisfy the hypotheses of Proposition~\ref{prop:reduction}. For $0\leq x< 1$, if $\ell_0\geq\ell$ and 
\begin{equation}
\label{eq:ygoodweaker}
(g(1+x)+g(1-x))^k \geq \frac{c2^{k-2}\left((1+x)^{\ell_0} + (1-x)^{\ell_0}\right)}{(1+x)^{\ell_0}(1-x)^{\ell_0}}
\end{equation}
then \eqref{eq:ygood} holds. 
\end{lem}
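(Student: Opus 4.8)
The plan is to deduce \eqref{eq:ygood} from \eqref{eq:ygoodweaker} by a short chain of elementary estimates; I will use $\ell\geq 0$ and $k\geq 2$, both of which hold in all of our applications (in the proof of Theorem~\ref{th:correlated2} we have $k=2$ and $\ell\in\{0,1,2\}$). First I would clear the denominator in \eqref{eq:ygood}, so that the goal becomes
\[(1+x)^{\ell}(1-x)^{\ell}\bigl(g(1+x)+g(1-x)\bigr)^k\geq c\left((1+x)^{\frac{\ell}{k-1}}+(1-x)^{\frac{\ell}{k-1}}\right)^{k-1}.\]
To control the right-hand side I would apply Claim~\ref{claim:Holder} with $b_1=(1+x)^{\ell/(k-1)}$, $b_2=(1-x)^{\ell/(k-1)}$, $s=1$ and $m=k-1\geq 1$, which gives
\[\left((1+x)^{\frac{\ell}{k-1}}+(1-x)^{\frac{\ell}{k-1}}\right)^{k-1}\leq 2^{\,k-2}\bigl((1+x)^{\ell}+(1-x)^{\ell}\bigr);\]
the point of taking $m=k-1$ is precisely that the awkward exponent $\ell/(k-1)$ then collapses to $\ell$.

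After this step it suffices to prove $(1+x)^{\ell}(1-x)^{\ell}\bigl(g(1+x)+g(1-x)\bigr)^k\geq c\,2^{k-2}\bigl((1+x)^{\ell}+(1-x)^{\ell}\bigr)$. Multiplying \eqref{eq:ygoodweaker} through by $(1+x)^{\ell}(1-x)^{\ell}$, one sees that this will follow once we know the purely algebraic inequality
\[(1+x)^{\ell}+(1-x)^{\ell}\leq (1-x^2)^{\ell-\ell_0}\bigl((1+x)^{\ell_0}+(1-x)^{\ell_0}\bigr).\]
To establish this, I would write $(1-x^2)^{\ell-\ell_0}=(1+x)^{\ell-\ell_0}(1-x)^{\ell-\ell_0}$ and distribute, so that, on setting $\delta:=\ell_0-\ell\geq 0$, the inequality becomes
\[(1+x)^{\ell}\bigl((1-x)^{-\delta}-1\bigr)\geq(1-x)^{\ell}\bigl(1-(1+x)^{-\delta}\bigr).\]
The cases $x=0$ and $\delta=0$ are trivial; for $0<x<1$ and $\delta>0$ both sides are nonnegative, and dividing through by $(1-x)^{\ell}$ and using $\bigl(\tfrac{1+x}{1-x}\bigr)^{\ell}\geq 1$ (the one place where $\ell\geq 0$ is used) reduces the problem to $(1-x)^{-\delta}+(1+x)^{-\delta}\geq 2$, which is immediate from AM--GM since $(1-x)^{-\delta}(1+x)^{-\delta}=(1-x^2)^{-\delta}\geq 1$.

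I expect this argument to be essentially routine. The two places requiring a little care are choosing the exponents in Claim~\ref{claim:Holder} so that $\ell/(k-1)$ simplifies, and spotting the factorisation of $(1-x^2)^{\ell-\ell_0}\bigl((1+x)^{\ell_0}+(1-x)^{\ell_0}\bigr)$ that turns the remaining inequality into a one-line AM--GM estimate; there is no genuine analytic obstacle, and the conclusion depends on $g$ and $\rho$ only through the hypothesis \eqref{eq:ygoodweaker} itself.
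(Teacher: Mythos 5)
Your proof is correct and follows essentially the same route as the paper's: both apply Claim~\ref{claim:Holder} to bound $\bigl((1+x)^{\ell/(k-1)}+(1-x)^{\ell/(k-1)}\bigr)^{k-1}$ by $2^{k-2}\bigl((1+x)^{\ell}+(1-x)^{\ell}\bigr)$ (you choose $b_i=(1\pm x)^{\ell/(k-1)}$, $s=1$, $m=k-1$, while the paper takes $b_i=1\pm x$, $s=\ell/(k-1)$, $m=\ell$ and then raises to the power $(k-1)/\ell$ — the two parameterizations give the identical bound), and then both reduce the $\ell_0$ version to the $\ell$ version by the monotonicity of $(1-x)^{-\ell}+(1+x)^{-\ell}$ in $\ell\ge0$. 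The paper merely asserts this monotonicity, whereas you supply a short self-contained AM--GM argument for the required two-point comparison, which is a nice touch but not a different method.
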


\begin{proof}
Applying Claim~\ref{claim:Holder} with $s=\frac{\ell}{k-1},m=\ell, b_1=1+x$ and $b_2=1-x$ yields
\[\left((1+x)^{\frac{\ell}{k-1}}+(1-x)^{\frac{\ell}{k-1}}\right)^{\ell}\leq 2^{\ell-\frac{\ell}{k-1}}\left((1+x)^\ell+(1-x)^\ell\right)^{\frac{\ell}{k-1}}.\]
Raising both sides to the power $\frac{k-1}{\ell}$ gives us
\[\left((1+x)^{\frac{\ell}{k-1}}+(1-x)^{\frac{\ell}{k-1}}\right)^{k-1}\leq 2^{k-2}\left((1+x)^\ell+(1-x)^\ell\right).\]
Thus, to prove \eqref{eq:ygood}, it is sufficient to show that
\[\left(g(1+x)+g(1-x)\right)^k\geq \frac{c2^{k-2}\left((1+x)^\ell+(1-x)^\ell\right)}{(1+x)^{\ell}(1-x)^{\ell}}.\]
Note that the function $\frac{(1+x)^\ell+(1-x)^\ell}{(1+x)^{\ell}(1-x)^{\ell}}$ is increasing in $\ell$. So, for $\ell_0\geq \ell$, the inequality \eqref{eq:ygoodweaker} is also sufficient for proving \eqref{eq:ygood}. 
\end{proof}

\begin{lem}
\label{lem:simplifyygood2}
Let $k,\ell,g$ and $\rho$ satisfy the hypotheses of Proposition~\ref{prop:reduction}. Furthermore, let $m\geq\frac{\ell}{k-1}$ and assume that $g(z)=z^m$ for $z\in [0,2]$. For $0\leq x< 1$, if 
\begin{equation}
\label{eq:ygoodweaker2}
\left(\frac{(1+x)^m+(1-x)^m}{2}\right)^{k-\frac{\ell}{m}}\geq \frac{c}{2(1+x)^{\ell}(1-x)^{\ell}}
\end{equation}
then \eqref{eq:ygood} holds. 
\end{lem}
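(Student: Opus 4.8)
The plan is to show, using $g(z)=z^{m}$, that \eqref{eq:ygoodweaker2} forces \eqref{eq:ygood}, by distilling everything down to one clean single‑variable inequality to which Claim~\ref{claim:Holder} applies directly. Fix $0\le x<1$ and abbreviate $S:=(1+x)^{m}+(1-x)^{m}$ and $T:=(1+x)^{\ell/(k-1)}+(1-x)^{\ell/(k-1)}$; note that $(1+x)^{\ell}(1-x)^{\ell}>0$ on this range. First I would substitute $g(z)=z^{m}$ into both inequalities. Multiplying \eqref{eq:ygoodweaker2} through by the positive quantity $2(1+x)^{\ell}(1-x)^{\ell}$ shows that it is equivalent to
\[c\le 2(1+x)^{\ell}(1-x)^{\ell}\Big(\tfrac{S}{2}\Big)^{k-\ell/m}=\frac{(1+x)^{\ell}(1-x)^{\ell}\,S^{\,k-\ell/m}}{2^{\,k-1-\ell/m}},\]
whereas \eqref{eq:ygood} reads
\[c\le \frac{(1+x)^{\ell}(1-x)^{\ell}\,S^{k}}{T^{k-1}}.\]
Hence it suffices to prove that the right‑hand side of the first bound is at most that of the second; cancelling the common positive factor $(1+x)^{\ell}(1-x)^{\ell}S^{\,k-\ell/m}$ and cross‑multiplying, this is exactly the inequality $T^{k-1}\le 2^{\,k-1-\ell/m}\,S^{\ell/m}$.

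To establish this last inequality, I would first observe that if $\ell=0$ then \eqref{eq:ygood} and \eqref{eq:ygoodweaker2} coincide verbatim, so there is nothing to prove; thus assume $\ell>0$, so that $s:=\ell/(k-1)>0$ and, by the hypothesis of the lemma, $m\ge s$ (in particular $m>0$). Applying Claim~\ref{claim:Holder} with $b_{1}=1+x$, $b_{2}=1-x$ and this value of $s$ (legitimate since $m\ge s>0$) gives
\[T^{m}=\big((1+x)^{s}+(1-x)^{s}\big)^{m}\le 2^{\,m-s}\big((1+x)^{m}+(1-x)^{m}\big)^{s}=2^{\,m-s}\,S^{s}.\]
Raising both sides to the power $(k-1)/m>0$ and simplifying the exponents via $(m-s)\tfrac{k-1}{m}=k-1-\tfrac{\ell}{m}$ and $s\tfrac{k-1}{m}=\tfrac{\ell}{m}$ yields $T^{k-1}\le 2^{\,k-1-\ell/m}\,S^{\ell/m}$, which is the inequality isolated above, and the lemma follows.

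The only delicate point is the choice of substitution into Claim~\ref{claim:Holder}: one applies the Claim with its outer exponent equal to $m$ (rather than to $k-1$), and only then rescales by the power $(k-1)/m$, the hypothesis $m\ge\ell/(k-1)$ being precisely what makes this application valid and the exponent bookkeeping work out. Everything else is routine manipulation of the positive quantities $1\pm x$.
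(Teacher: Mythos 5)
Your argument is essentially identical to the paper's: you reduce the implication \eqref{eq:ygoodweaker2}$\Rightarrow$\eqref{eq:ygood} to the single inequality $T^{k-1}\le 2^{\,k-1-\ell/m}S^{\ell/m}$ with $S=(1+x)^m+(1-x)^m$ and $T=(1+x)^{\ell/(k-1)}+(1-x)^{\ell/(k-1)}$, then obtain it from Claim~\ref{claim:Holder} applied with $s=\ell/(k-1)$ and outer exponent $m$, followed by raising both sides to the power $(k-1)/m$. The only slight difference is your explicit disposal of the $\ell=0$ case, which the paper tacitly passes over even though Claim~\ref{claim:Holder} is stated only for $s>0$; this is a small but genuine improvement in rigour.
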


\begin{proof}
Applying Claim~\ref{claim:Holder} with $s=\frac{\ell}{k-1}, b_1=1+x$ and $b_2=1-x$ yields
\[\left((1+x)^{\frac{\ell}{k-1}}+(1-x)^{\frac{\ell}{k-1}}\right)^{m}\leq 2^{m-\frac{\ell}{k-1}}\left((1+x)^m+(1-x)^m\right)^{\frac{\ell}{k-1}}.\]
Raising both sides to the power $\frac{k-1}{m}$ gives us
\[\left((1+x)^{\frac{\ell}{k-1}}+(1-x)^{\frac{\ell}{k-1}}\right)^{k-1}\leq 2^{k-1 -\frac{\ell}{m}}\left((1+x)^m+(1-x)^m\right)^{\frac{\ell}{m}}.\]
Thus, to prove \eqref{eq:ygood}, it is sufficient to show that
\[\frac{\left((1+x)^m+(1-x)^m\right)^k}{2^{k-1 -\frac{\ell}{m}}\left((1+x)^m+(1-x)^m\right)^{\frac{\ell}{m}}}\geq \frac{c}{(1+x)^{\ell}(1-x)^{\ell}}.\] 
This inequality is equivalent to \eqref{eq:ygoodweaker2}. 
\end{proof}

\end{document}